\newcommand{\WM}[1]{\todo[inline,color=orange]{WM: #1}} 
\newcommand{\IL}[1]{\todo[inline,color=yellow]{IL: #1}} 
\newcommand{\BH}[1]{\todo[inline,color=green]{BH: #1}} 
\newtheorem{proposition}{Proposition}[section]
\newtheorem{lemma}{Lemma}[section]
\newtheorem{definition}{Definition}
\newtheorem{remark}{Remark}[section]
    \title{A geometric singular perturbation analysis of generalised \\shock selection rules in reaction-nonlinear diffusion models}
\author[1]{Bronwyn H Bradshaw-Hajek}
\author[2]{Ian Lizarraga}
\author[2]{Robert Marangell}
\author[2]{Martin Wechselberger}
\affil[1]{UniSA STEM, University of South Australia, Mawson Lakes, South Australia, Australia}
\affil[2]{School of Mathematics and Statistics, University of Sydney, NSW, Australia}
\date{}
\begin{document}

\maketitle

\begin{abstract}
    
Reaction-nonlinear diffusion (RND) partial differential equations are a fruitful playground to model the formation of sharp travelling fronts, a fundamental pattern in nature. In this work, we demonstrate the utility and scope of regularisation as a technique to investigate shock-fronted solutions of RND PDEs, using geometric singular perturbation theory (GSPT) as the mathematical framework. In particular, we show that {\it composite} regularisations can be used to construct families of monotone shock-fronted travelling waves sweeping out distinct generalised area rules, which interpolate between the equal area and extremal area (i.e. algebraic decay) rules that are well-known in the shockwave literature. We further demonstrate that our RND PDE supports other kinds of shock-fronted solutions, namely, nonmonotone shockwaves as well as shockwaves containing slow tails in the aggregation (negative diffusion) regime.  Our analysis blends Melnikov methods---in both smooth and piecewise-smooth settings---with GSPT techniques applied to the PDE over distinct spatiotemporal scales.

We also consider the spectral stability of these new interpolated shockwaves. Using techniques from geometric spectral stability theory, we determine that our RND PDE admits spectrally stable shock-fronted travelling waves. The multiple-scale nature of the regularised RND PDE continues to play an important role in the analysis of the spatial eigenvalue problem. 
\end{abstract}

\section{Introduction}

Continuum transport models of coupled systems of cell populations have for the most part used standard linear diffusion to model population spread \cite{keenersneyd,murray02}.

Reaction-diffusion equations, such as the extensively studied Fisher equation \cite{fisher37}, are used to model population growth dynamics combined with a simple linear Fickian diffusion process, and are typically capable of exhibiting travelling wave solutions.\\

In cell migration, advection (or transport) is another source of pattern formation. It may represent, e.g., tactically-driven movement, where cells migrate in a directed manner in response to a concentration gradient \cite{Keller1971,murray02}. 
Such a concentration gradient develops, for example, in a soluble fluid (chemotaxis) or as a gradient of cellular adhesion sites or of substrate-bound chemoattractants (haptotaxis).  Well-studied examples of individual cells exhibiting directed motion in response to a chemical gradient include bacteria chemotactically migrating towards a food source. Wound healing, angiogenesis or malignant tumor invasion are just a few examples of chemotactic and/or haptotactic cell movement where the migrating cells form part of a dense population of cells as may be found in tissues.  Such migrating cell populations not only form travelling waves but may also develop sharp interfaces in the wave form   \cite{hvhmpw13,hvhmpw14,Hillen2009,landmanetal03,landmanetal08,marchantetal01,wechpet10}.  \\

Another important experimental observation is that motility varies with population density \cite{Browning2020,landetal10,P2007}. Such density-dependent nonlinear diffusion processes are also implicated in the formation of sharp interfaces. In the context of population dynamics, living cells make informed decisions through, e.g., sensing the local cell density, and they perform a `biased walk.' This could lead to, e.g., the tendency to cluster or aggregate with other nearby cells--think of flocking or swarming. Such aggregation mechanisms can be achieved through, e.g., density dependent negative (or backward) diffusion, and such nonlinear diffusion models with subdomains of backward diffusion are known to admit shock-type solutions \cite{hollig1983existence}. Our goal in this paper is to consider the emergence of shock-fronted travelling waves in {\it reaction-nonlinear diffusion} (RND) models that arise as continuum limits of discrete motile processes made by aggregations of cells or other biological agents \cite{Browning2020,Johnston2017,penington11}. \\

 In general, shocks are problematic because as the wave front steepens (and a shock forms) the solution becomes multivalued and physically nonsensical. The model breaks down and it becomes impossible to compute the temporal evolution of the solution \cite{mainietal07,mainietal10}. 
To deal with such ill-posed problems, shock solutions of PDEs are mathematically formalised as weak solutions in the appropriate function space, and it is generally understood that such solutions are nonunique---this is related to the issue of where exactly the shock discontinuity develops in the domain. Nonunique families of weak solutions are known to arise in models of nonlinear diffusion processes. In physical and chemical problems, shock selection is typically enforced by physical constraints, such as conservation laws, entropy and energy conditions, just to name a few. For example, in advection-reaction models they may represent hyperbolic balance laws, i.e., hyperbolic conservation laws with source terms, where the formation of shock fronts is well-known. These shock selection rules are usually referred to as admissibility criteria. We refer the reader to \cite{smoller83,whitham1999} for a comprehensive development of shock structure theory from this point-of-view.\\

Our approach  to deal with ill-posed problems and shock formation in this paper is to employ {\em regularisation}: we add small perturbative higher order terms to these models to give rise to well-posed problems and, hence, introduce smoothing effects. In the context of hyperbolic conservation/balance laws, these are usually small viscous (diffusive) regularisations, e.g., the well-known viscous Burgers equation. Due to dissipative mechanisms, these physical shocks are observed as narrow transition regions with steep gradients of field variables. Mathematically, questions of existence and uniqueness of such viscous shock profiles are fundamental.\footnote{Another option is dispersive regularisation, e.g., the Korteweg-de Vries (KdV) equation. Note that both regularisations (viscous and dispersive) deal with the same problem (inviscid Burgers equation) but create very different outcomes.}\\

Regularisation techniques have been employed in physical and chemical problems having nonlinear diffusion processes. These regularised models are usually referred to as phase separation problems \cite{Fife00}. The Cahn-Hilliard equation modelling nucleation in a binary alloy is probably the most famous of these phase separation problems \cite{cahn61,hilliard70,pego89}, while Sobolev regularisation of phase separation models is another technique \cite{novpego91}. These regularisation techniques are not so well-known \cite{CohenMurray81,landetal10,padron03,penington11} within the population dynamics modelling community.\\

Possible shock formation in such regularised RND models is the main focus of this article, and we will use tools from geometric singular perturbation theory and dynamical systems theory to tackle this problem. 
Again, questions of existence and uniqueness of such regularised shock profiles are fundamental. In particular, we focus on the shock selection criteria based on different composite regularisations. It is worth noting that Witelski considered shock formation in regularised advective nonlinear diffusion models in \cite{WITELSKI199527,Witelski96} by means of singular perturbation theory, i.e., he also included advection or transport phenomena in his study. We, on the other hand, focus on nonlinear diffusion as the sole shock formation mechanism. Furthermore, we also consider the spectral stability of these regularised shock waves, and we even construct new kinds of regularised waves, including nonmonotone shockwaves as well as shockwaves containing slow passage through regions of negative diffusion---all of which extends Witelski's approach significantly.\\

The manuscript is structured as follows: in section 2 we introduce our RND model and its composite regularisations. In sections 3 and 4 we show the existence of travelling and standing waves in such models using  GSPT machinery. In section 5 we then show spectral stability results for monotone regularised shock waves, and we conclude in section 6. We summarise the Melnikov theory we use---in particular, a new piecewise-smooth adaptation of the Melnikov integral---in the Appendix.

\section{The setup for RND Models}

We start by considering a dimensionless reaction--nonlinear diffusion model of the form
\begin{equation}\label{eq:rnd}
u_t =  ( D(u) u_x)_x   + f(u) = \Phi(u)_{xx}    + f(u)
\end{equation}
where $x\in\mathbb{R}$ denotes the spatial domain, $t\in\mathbb{R}_+$ denotes the time domain, $u(x,t)\in \mathbb{R}_+$ denotes a (population/agent) density, $D(u)$ models a (population/agent) density dependent diffusivity. $\Phi(u)$ is an anti-derivative of $D(u)$, i.e. $\Phi'(u)=D(u)$, referred to as the \emph{potential}.

The (dimensionless) population/agent density $u$ is scaled such that  $u\in[0,1]$ forms the domain of interest where $u=1$ is the carrying capacity of the population/agent density. This domain of interest is also reflected in the reaction term $f(u)$ which is often modelled either as {\em logistic growth}

or as {\em bistable growth}. We consider the latter in this paper:
\begin{equation}\label{f:bistable}
f(u)=\kappa u(u-\alpha)(1-u)\,,\,\, \kappa>0,\,0<\alpha<1.    
\end{equation}

We focus on RND models where not only diffusion is present but also {\em aggregation} (or {\em backward diffusion}) \cite{cahn61, landetal10, hilliard70,  murray02, padron03}.  

By imposing different motility rates for agents that are isolated compared to other agents, one obtains density dependent nonlinear diffusion \cite{Johnston2017}. Aggregation will manifest itself in these models in sign changes of the density dependent diffusion coefficient $D(u)$.
The simplest density-dependent nonlinear diffusion coefficient model that we consider is of the polynomial form

\begin{equation}\label{eq:phi1}
D(u)=\beta(u-\gamma_1)(u-\gamma_2)
\end{equation}
with $0<\gamma_1<\gamma_2<1$, i.e., we model diffusion-aggregation-diffusion (DAD) in the domain of interest.

\begin{figure}[t]
    \centering
    \includegraphics[width=9cm]{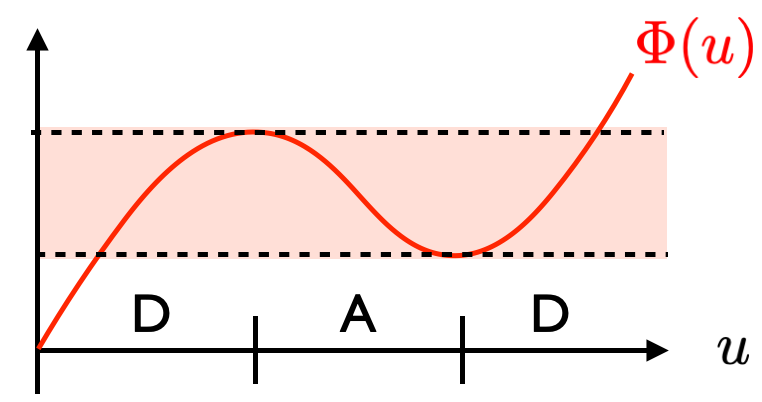}
    \caption{The graph of the potential $\Phi$ and the admissible jump zone (shaded) that allows for possible shock connections. We denote the graph $(u,\Phi(u))$ by $S=S_s^l\cup F_l\cup S_m \cup F_r \cup S_s^r$ which is referred to as a critical manifold; see Sec. \ref{sec:layerproblem} for details. Possible shocks are confined to a fixed potential value $\Phi(u)=const$, i.e.  jumps must occur over the middle branch of the potential, $S_m$, connecting the outer two branches of the potential, $S_s^r$ and $S_s^l$ (shaded region). }
    \label{fig:jump-zone}
\end{figure}

For sparse population density diffusive behaviour is assumed, while for intermediate population density aggregation will happen. For larger population densities (close to the carrying capacity), diffusive behaviour occurs again. 

This DAD model assumption leads to a non-monotone cubic potential $\Phi(u)$ as sketched in Figure~\ref{fig:jump-zone}. It is this non-monotonicity of the potential $\Phi$ which creates a bistability zone of diffusive states which can lead to {\em phase-separation}, i.e., shock formation.

\subsection{RND dynamics and shock formation in travelling wave coordinates}
\label{sec:formalshocks}
Let us look for one of the simplest possible coherent structures in such RND models \eqref{eq:rnd}, travelling waves with wave speed $c\in\mathbb{R}$ that connect the asymptotic end states $u_{-}=1\to  u_{+}=0$ or vice versa, i.e., population/agents invade or evade the unoccupied domain with constant speed. A travelling wave analysis introduces a co-moving frame $z=x-ct$ in \eqref{eq:rnd}, $c\in\mathbb{R}$. Stationary solutions, i.e., $u_t=0$, in this co-moving frame include travelling waves/fronts, and they are found as special (heteroclinic) solutions of the corresponding ODE problem
\begin{equation}
-cu_z-(D(u)u_z)_z = f(u)\,.
\end{equation}
Define the variable $v:=-cu-D(u)u_z\,$ to obtain the corresponding  2D dynamical system
\begin{equation}\label{eq:red-1}
\begin{aligned}
D(u)u_z&= -(v+cu)\\
v_z &= f(u)\,.\\
\end{aligned}
\end{equation}
Note that this dynamical system is singular where $D(u)=0$, i.e., wherever the diffusion-aggregation transition happens. 
To be able to study this problem \eqref{eq:red-1} on the whole domain of interest including these transition zones near $D(u)=0$, we make an auxiliary state-dependent transformation $dz =D(u)d\zeta$ which gives the so-called {\em desingularised problem}
\begin{equation}\label{eq:desing-1}
\begin{aligned}
u_\zeta&= -(v+cu)\\
v_\zeta &= D(u)f(u)\,.
\end{aligned}
\end{equation}
This problem is topologically equivalent to \eqref{eq:red-1} in the diffusion regime $D(u)>0$

while one has to reverse the orientation in the aggregation regime $D(u)<0$

to obtain the equivalent flow.

\begin{remark}
We emphasize that the auxiliary system is only a proxy system to study the problem near $D(u)=0$. To completely understand the original flow near $D(u)=0$, one has to use additional techniques such as the blow-up method; see, e.g., \cite{szmwex2001}.
\end{remark}

\begin{remark} \label{rem:hamiltonian}
The desingularised system \eqref{eq:desing-1} is Hamiltonian when $c = 0$, with generating function
\begin{equation}
    \tilde H(u,v) = -\frac{v^2}{2}-\int D(u)f(u)\,du.
\end{equation} 
In this case, local segments of the (un)stable manifolds of the saddle points at $u=0$ and $u=1$ lie inside contours of $\tilde H(u,v)$.
\end{remark}
The asymptotic end states of the travelling waves form equilibrium states of the desingularised (and the original) problem defined by $f(u_\pm)=0$, and $v_\pm = -c u_\pm$. Our focus is on these asymptotic end states given by the equilibria
\begin{equation}
p_- := (u_-,v_-)=(1,-c)\,, \qquad  p_+: = (u_+,v_+)=(0,0) 
\end{equation}

\begin{remark}
    In the case of a bistable reaction term \eqref{f:bistable}, there exists an additional equilibrium in the domain of interest defined by $f(u_{b}=\alpha)=0$ which gives
$(u_{b},v_{b})=(\alpha,-c\alpha)$. Its importance/relevance  will be discussed later on.

\end{remark}

In our setup of the RND model, travelling wave solutions connecting $u_-$ and $u_+$ (if they exist) allow for nonsmooth solutions, because the zeroes of the diffusion coefficient $D(u)$ in the relevant domain of interest $u\in [0,1]$ define singularities in this problem.  Discontinuous jumps (shocks) can occur anywhere within the admissible jump zone; see Figure~\ref{fig:jump-zone}. In the absence of an obvious integral conservation law, our approach is to define {\em geometric} criteria for shock selection. \\

One strategy is to formally select a shock height from the admissible jump zone in Figure \ref{fig:jump-zone}, i.e. we specify the endpoints of the shock $u = u_{l}$ and $u = u_{r}$ such that $\Phi(u_{l}) = \Phi(u_{r})$. For system \eqref{eq:desing-1}, let us denote by $W^u(p_-,c)$ the unstable manifold of $p_-$ and by $W^s(p_+,c)$ the stable manifold of $p_+$. Let $v_-(c)$ denote the $v$-coordinate of the first intersection of $W^u(p_-,c)$ with the section $\{(u,v):u=u_{r}\}$, and similarly denote by $v_+(c)$ the $v$-coordinate of the first intersection of $W^s(p_+,c)$ with the section $\{(u,v):u=u_{l}\}$.  We can then attempt to locate a wavespeed $c = c_*$ such that 
\begin{equation}
v_+(c_*) = v_-(c_*).
\end{equation}

If such a wavespeed exists, then we are able to construct a {\it formal} (nonsmooth) shock connecting $u_-$ and $u_+$ according to a given shock selection rule. \\

Shock selection can also be enforced from other geometric conditions. Let us consider a symmetric setup with $\gamma_2 = 1 - \gamma_1$ and $\alpha = 1/2$, i.e., the roots of the diffusion $D(u)$ are placed symmetrically about the midpoint $u=1/2$ in the interval $u \in (0,1)$ and the middle root of the reaction term $f(u)$ is placed exactly in the middle. In view of Remark \ref{rem:hamiltonian} and the symmetry, the (un)stable manifolds of the saddle points at $u_\pm=0,1$ happen to lie on the same contour of $\tilde H(1,0) = \tilde H(0,0)=0$ when $c = 0$, i.e., 

\begin{equation}
\tilde H(1,0)=\int_0^1 D(u)f(u)=0\,.
\end{equation}

We find that there is exactly one pair of $u$-values $u_{l/r}$ at which the (un)stable manifolds of the saddle points in the region $D(u)>0$ can be formally connected by a shock, such that the potential $\Phi(u)$ remains constant; see Figure \ref{fig:symmstanding}. In other words, the correct admissible standing shock has been {\it decided} by the symmetry.

\begin{remark}
    As a consequence of the symmetry, the shock selection values $u=u_{l/r}$ in this case are given by the well-known \textit{equal-area rule}; see Figure~\ref{fig:symmstanding}(b) and the corresponding formula \eqref{def:equal-area}.
\end{remark}
 
Under continuous variation of the shock selection height and the other model parameters, we expect an entire continuum of such shock connections to persist near to this family of symmetric standing shocks, with continuously varying wavespeeds.\footnote{We revisit this assertion in more detail in Sec. \ref{sec:concat}.} Each member of this continuum will constitute a formal shock solution connecting the states $u_l$ and $u_r$ in the desingularised system \eqref{eq:desing-1}.

\begin{figure}[t]
\begin{subfigure}{0.5\textwidth}
\includegraphics[width=8.75cm]{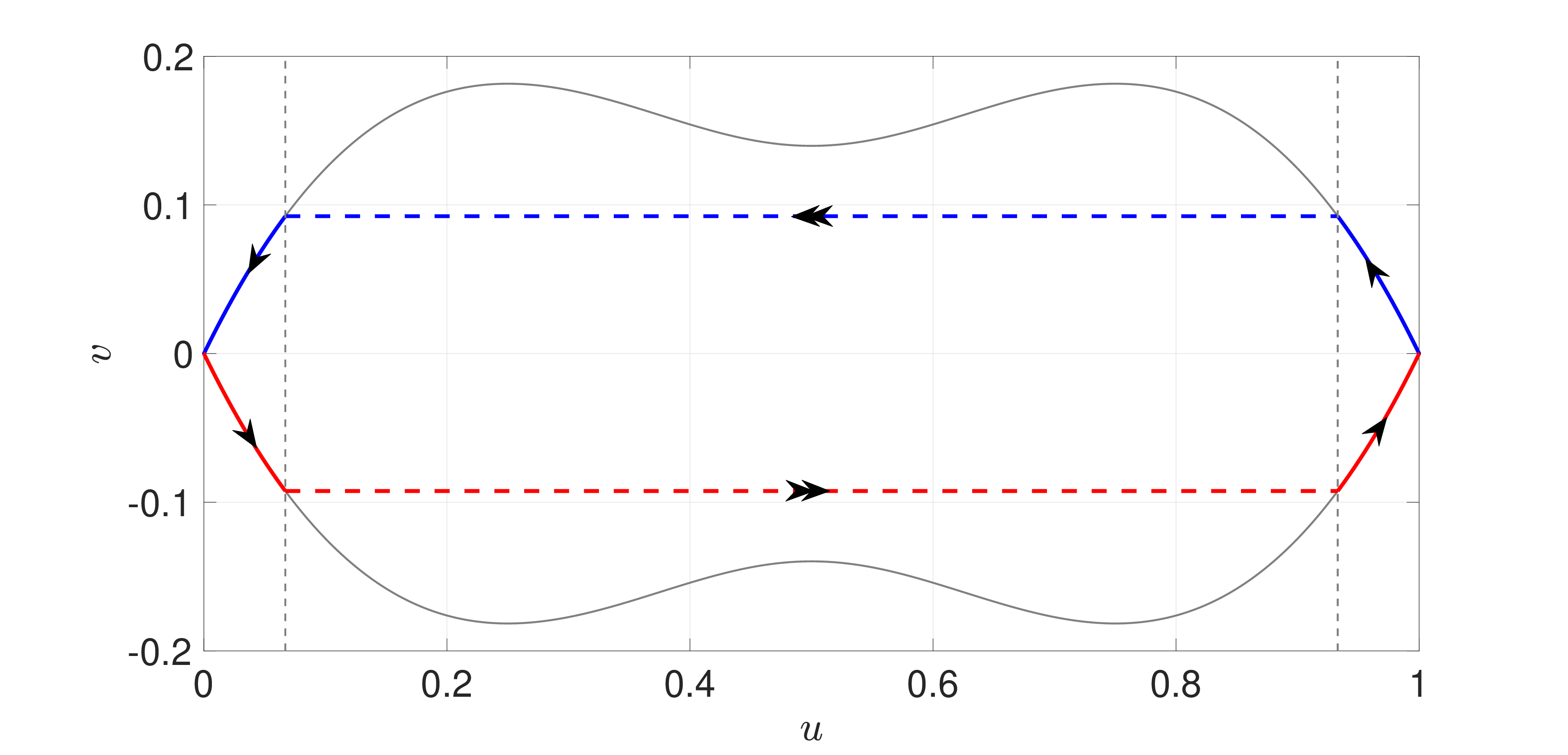} 
\caption{}
\end{subfigure}
\begin{subfigure}{0.5\textwidth}
\includegraphics[width=8.75cm]{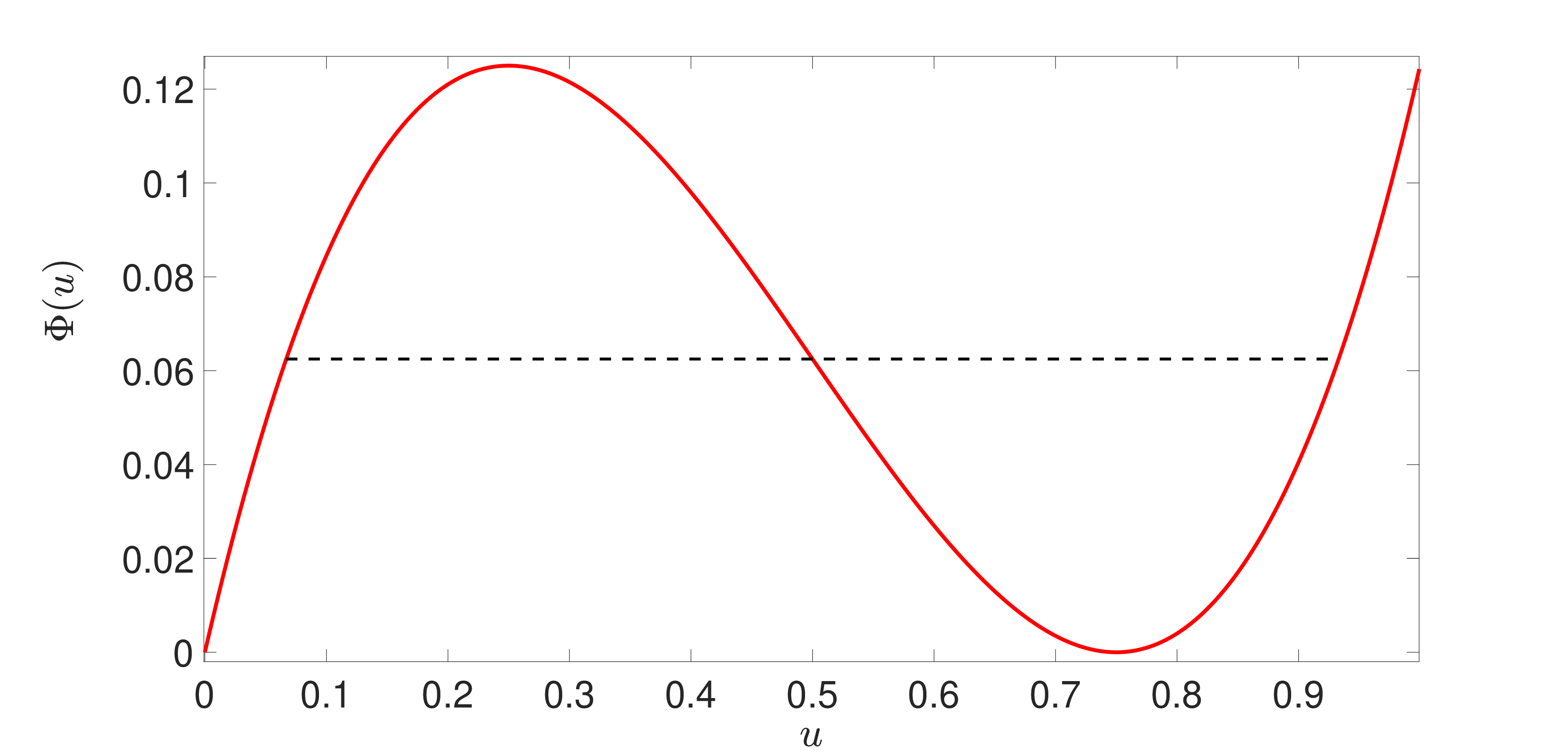}
\caption{}
\end{subfigure}
    \caption{A pair of symmetric standing waves (solid curve segments connected by dashed lines representing shock discontinuities of the solutions, red and blue online) of the desingularised travelling wave equations \eqref{eq:desing-1}, subject to the equal area shock selection rule. Vertical dashed lines (gray online) denote shock selection values $u = u_{l/r}$. Corresponding contour $\tilde H(u,v) = \tilde H(0,0)=\tilde H(1,0)=0$ underlaid (gray solid curve). (b) Corresponding shock selection with $u_l \approx 0.06699$ and $u_r \approx 0.93301$.  Parameter set: $\gamma_1 = 1/4,\,\gamma_2 = 1-\gamma_1 = 3/4,\,\alpha = 1/2,\,\,c=0,\,\kappa=5,\,\beta = 6$.}
    \label{fig:symmstanding}
\end{figure}

\subsection{Regularisations of RND models} \label{sec:regularisations}

 Following the geometric approach in the previous section, our goal is to describe  conditions under which particular shock criteria are {\it uniquely} selected from within the admissible jump zone. The related issues of nonuniqueness and lack of smoothness suggest that we should consider a `nearby' system in which locally unique, smoothed shock-fronted solutions are available. Our approach is to add small perturbative high-order regularisation terms. \\

Regularisation of RND models is typically considered in one of two ways  \cite{padron03, pego89}. 
The first method of regularisation accounts for \emph{viscous relaxation} by adding a small temporal change in the diffusivity: 

\begin{equation}\label{eq:varnad}
 u _t  =  ( \Phi(u) + \varepsilon u_{t})_{xx}  +f(u), \qquad 0 \leq \varepsilon \ll 1.
\end{equation}
This is usually referred to as a Sobolev regularisation.
The second of these involves adding a small change in the potential to account for interfacial effects, leading to: 
\begin{equation}\label{eq:charnad}
 u_t  =  ( \Phi(u) - \varepsilon^2 u_{xx} )_{xx}  +f(u), \qquad 0 \leq \varepsilon \ll 1.
\end{equation}
\noindent
Both regularisation techniques can be viewed as higher order viscous regularisations. They have been widely employed in models of chemical phase-separation, though they have gone relatively unnoticed in biological models until very recently.\\

Here, we study the possible effects of \emph{both} regularisations in a single RND model, i.e.,
\begin{equation}\label{eq:vcharnad}
 u _t  =  ( \Phi(u) + \varepsilon a u_{t} - \varepsilon^2 u_{xx} )_{xx}  +f(u), 
 \qquad 0 \leq \varepsilon \ll 1, a\ge 0.
\end{equation}

Since we only consider small perturbative regularisations $0<\varepsilon\ll 1$, these models are so-called {\em singularly perturbed systems} and, as a consequence, the powerful machinery of geometric singular perturbation theory (GSPT) is applicable \cite{fenichel79, jonesgsp95, wechpet10}, as we shall explain.

\begin{remark}
Continuum macroscale models can also be derived from lattice-based microscale models; see \cite{Johnston2017} for leading order RND models and \cite{Anguige_2008} for (more complicated) regularised RND models.
\end{remark}

\section{The GSPT setup for the regularised RND model \eqref{eq:vcharnad}} \label{sec:dynamics}

We derive conditions based on the specific functions $D(u)$ and $f(u)$ that lead to travelling waves with sharp interfaces (shocks) in one spatial dimension.
We introduce a travelling wave coordinate $z=x-ct$ for waves with speed $c\in\mathbb{R}$ and ask for stationary states of the PDE in the co-moving frame. This transforms the regularised RND model \eqref{eq:vcharnad} into a fourth order ordinary differential equation
\begin{equation}\label{eq:rnd-tw-ch}
-c u_z = \Phi(u)_{zz} -\varepsilon a c u_{zzz}-\varepsilon^2  u_{zzzz} +f(u)\,,
\end{equation}
which we can recast as a \emph{singularly perturbed dynamical system} in standard form
\begin{equation}\label{eq:rnd-sys-1-slow}
\begin{aligned}
\varepsilon u_{z} &= \hat{u}\\
\varepsilon \hat{u}_{z} &=w+\Phi(u)-\delta \hat{u}\\ 
v_z &= f(u)\\
w_z &= v+cu\,.
\end{aligned}
\end{equation}
where $(u,\hat{u})\in\mathbb{R}^2$ are `fast' variables, $(v,w)\in\mathbb{R}^2$ are `slow' variables, $\varepsilon\ll 1$ is the singular perturbation parameter, and $\delta:=ac$ is an additional lumped system parameter incorporating the wave speed $c$ and the relative contribution of the viscous relaxation $a$.\\

Rescaling the `slow' independent travelling wave variable $dz=\varepsilon dy$ in \eqref{eq:rnd-sys-1-slow} gives the equivalent fast system
\begin{equation}\label{eq:rnd-sys-1-fast}
\begin{aligned}
u_{y} &= \hat{u}\\
\hat{u}_{y} &=w+\Phi(u)-\delta \hat u\\ 
v_y &= \varepsilon f(u)\\
w_y &= \varepsilon (v+cu)\,,
\end{aligned}
\end{equation}

with the `fast'  independent travelling wave variable $y$.

These equivalent dynamical systems \eqref{eq:rnd-sys-1-slow} respectively \eqref{eq:rnd-sys-1-fast} have a symmetry
\begin{equation}
(\hat{u},v,c,y) \leftrightarrow (-\hat{u},-v,-c,-y),\quad\mbox{respectively}\quad
(\hat{u},v,c,z) \leftrightarrow (-\hat{u},-v,-c,-z)\,.
\end{equation}

We will focus on heteroclinic connections made between the two equilibria
\begin{equation}
p_- := (u_-,\hat{u}_-,v_-,w_-)=(1,0,-c,-\Phi(1))\,, \qquad  p_+: = (u_+,\hat{u}_+,v_+,w_+)=(0,0,0,-\Phi(0))
\end{equation}
corresponding to steady-states of the density variable $u$ at $u = 1$ and $u=0$; note the slight abuse in notation in using $p_{\pm}$ to refer to the corresponding equilibria of both the 2D desingularised problem \eqref{eq:desing-1} and the (regularised) 4D system \eqref{eq:rnd-sys-1-fast}. An eigenvalue calculation using the linearisation of \eqref{eq:rnd-sys-1-fast} determines that both equilibria are (hyperbolic) saddle points having two-dimensional stable and unstable manifolds for each $\varepsilon>0$. \\

The aim is to use methods from GSPT to analyse the travelling wave problem in its `slow' respectively `fast' singular limit system, i.e., $\varepsilon\to 0$ in \eqref{eq:rnd-sys-1-slow} respectively \eqref{eq:rnd-sys-1-fast}, and to infer results on the existence (and stability) of shock-fronted travelling waves in the full regularised RND problem for $\varepsilon\neq 0$.

\subsection{The limit on the fast scale - the layer problem} \label{sec:layerproblem}

We begin with the `fast' system \eqref{eq:rnd-sys-1-fast}.
Here the limit $\varepsilon\to 0$ gives the {\em layer problem}

\begin{equation}\label{eq:rnd-layer-1}
\begin{aligned}
u_{y} &= \hat{u}\\
\hat{u}_{y} &=w+\Phi(u)-\delta \hat u\\ 
v_y =w_y &=0\,,
\end{aligned}
\end{equation}
i.e., $(v,w)$ are considered parameters. 
Hence, the flow is along two-dimensional fast fibers 
${\cal L} := \{(u,\hat{u},v,w)\in\mathbb{R}^4: (v,w)=const\}$. 
The set of equilibria of the layer problem,  
\begin{equation}\label{eq:critical-2c}
S:=\{(u,\hat{u},v,w)\in\mathbb{R}^4: \hat{u}= \hat{u}(u,v)=0,\, w=w(u,v)=-\Phi(u)\}\,,
\end{equation}
forms the two-dimensional {\em critical manifold} of the problem which is a graph over $(u,v)$-space.
In the assumed diffusion-aggregation-diffusion (DAD) setup \eqref{eq:phi1}, we have 
a sign change in the diffusivity along the set
\begin{equation}
F:=\{(u,\hat{u},v,w)\in S: D(u)=0\}\,,
\end{equation}
where
$F=F_l\cup F_r =\{(u,\hat{u},v,w)\in S: u=\gamma_1\}\cup\{(u,\hat{u},v,w)\in S: u=\gamma_2\}$ consists of two disjoint one-dimensional lines. Thus we have a splitting of the critical manifold $S=S_s^l\cup F_l\cup S_m \cup F_r \cup S_s^r$
where 
\begin{equation}
\begin{aligned}
    S_s^{l}:= & \{(u,\hat{u},v,w)\in\mathbb{R}^4: \hat{u}= \hat{u}(u,v)=0,\,w=w(u,v)=-\Phi(u),\, u<\gamma_1\}\\
    S_s^{r}:= &\{(u,\hat{u},v,w)\in\mathbb{R}^4: \hat{u}= \hat{u}(u,v)=0,\,w=w(u,v)=-\Phi(u),\, u>\gamma_2\}\\
    S_m:= & \{(u,\hat{u},v,w)\in\mathbb{R}^4: \hat{u}= \hat{u}(u,v)=0,\,w=w(u,v)=-\Phi(u),\, \gamma_1<u<\gamma_2\}\,,
\end{aligned}
\end{equation}
see Figure~\ref{fig:manifold}.
%
\begin{figure}
    \centering
    \includegraphics[width=10cm]{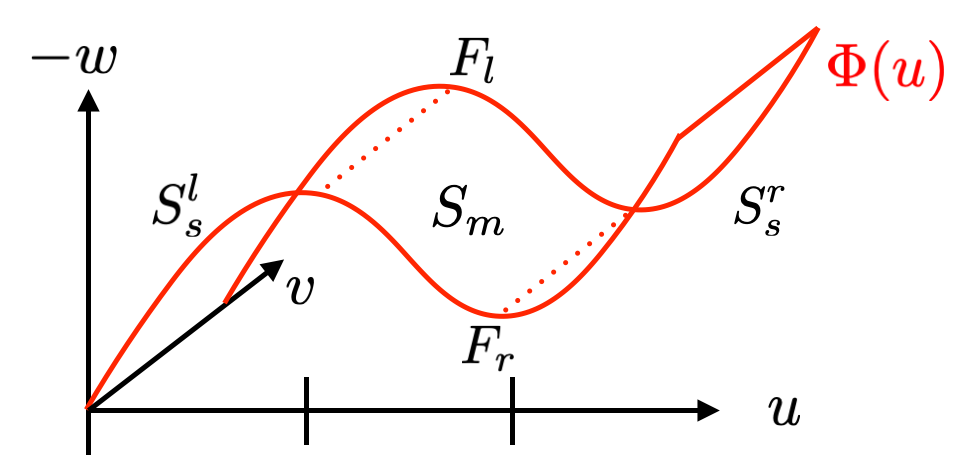}
    \caption{sketch of the two-dimensional critical manifold $S$ projected onto $(u,v,w)$-space.}
    \label{fig:manifold}
\end{figure}
%
%
The stability property of this set of equilibria $S$ is determined by the two non-trivial eigenvalues of the layer problem, i.e., the eigenvalues of the  Jacobian evaluated along $S$,
\begin{equation}
J=
\begin{pmatrix}
0 & 1\\
D(u) & -\delta
\end{pmatrix}
\,.
\end{equation}
This matrix has $\mbox{tr}\, J=-\delta$ and $\det J=-D(u)$. Hence, for $D(u)>0$ the outer branches $S_s^{l/r}$ are normally-hyperbolic and of saddle-type (S-type). For $\delta\neq 0$ and $D(u)<0$ the middle-branch $S_m$ is also normally-hyperbolic, focus/node-type (FN-type), while for $\delta= 0$ and $D(u)<0$ the middle-branch $S_m$ loses normal-hyperbolicity and is of centre-type (C-type).

Loss of normal hyperbolicity happens also along the set $F=F_l\cup F_r$ where $\det J=0$ independent of $\delta$.

\subsection{The limit on the slow scale - the reduced problem}

For the slow system \eqref{eq:rnd-sys-1-slow}, the limit $\varepsilon\to 0$ gives the {\em reduced problem}

\begin{equation}\label{eq:rnd-reduced-1}
\begin{aligned}
0 &= \hat{u}\\
0 &=w+\Phi(u)-\delta \hat u\\ 
v_z &= f(u)\\
w_z &= v+cu\,.
\end{aligned}
\end{equation}

It describes the `evolution' of the slow variables $(v,w)$ constrained to the 2D critical manifold $S$ \eqref{eq:critical-2c}
which is given as a graph over the $(u,v)$-coordinate chart. We denote the corresponding embedding 
$\psi: \mathbb{R}^2\to\mathbb{R}^4$, i.e., $S=\psi(u,v)$.
Therefore, we aim to study the corresponding reduced flow on $S$ in this $(u, v)$-coordinate chart. By definition, the main requirement on the reduced vector field $R(u,v)\in T\mathbb{R}^2$ is that, when mapped onto the tangent bundle $TS$ via the linear transformation $D\psi$ it has to correspond to the (leading order) slow component of the full four-dimensional vector field constraint to $TS$, i.e., 

\begin{equation}
D\psi(u,v) R(u,v)= \Pi^S G(\psi(u,v))=\left(\frac{v+cu}{-D(u)},0,f(u),v+cu\right)^\top
\end{equation}
where $\Pi^S G(\psi(u,v))$ is the projection\footnote{In general, such a projection operator $\Pi^S$ is oblique; see, e.g., \cite{wex20}. Here it is orthogonal due to the standard form of system \eqref{eq:rnd-sys-1-slow}.} of the vector field $G=(0,0,f(u),v+cu)^\top$ onto the tangent bundle $TS$ of the critical manifold $S$ along fast fibres $\mathcal L$ spanned by $\{(1,0,0,0)^\top,(0,1,0,0)^\top\}$.

Thus the reduced vector field $R(u,v)$ in the $(u,v)$-coordinate chart is given by the right-hand side of \eqref{eq:red-1}.

\begin{remark}
  The reduced problem is independent of the two choices of regularisation as expected, since it represents the TWP of the original RND model. With the geometric approach, we have the extra information that this reduced flow is constrained to the critical manifold $S$ embedded in the full four-dimensional phase-space.
\end{remark}

We classify all singularities of \eqref{eq:red-1} by analysing the auxiliary system, i.e., the desingularised problem \eqref{eq:desing-1}.
The equilibria of the reduced problem \eqref{eq:red-1} respectively desingularised problem \eqref{eq:desing-1} and their stability properties are summarised in Table~\ref{table:type-eq}. Additionally we know that the two asymptotic end-states $(u_\pm,v_\pm)$ are located on opposite outer branches $S_s^{l/r}$ of the critical manifold while the location of the additional equilibrium state $(u_b,v_b)$ varies under the variation of the system parameters. 

%

The Jacobian evaluated at any of these equilibria $(u_{\pm,b},v_{\pm,b})$ is given by 
\begin{equation}
J=
\begin{pmatrix}
-c & -1 \\
D(u_{\pm,b})f'(u_{\pm,b}) & 0
\end{pmatrix}
\end{equation}
which has $\mbox{tr}\,J=-c$ and $\det J= D(u_{\pm,b})f'(u_{\pm,b})$. 
The types of equilibria are summarised in Table~\ref{table:type-eq}.
The distinction between Node and Focus (NF)  depends on the sign of the discriminant 
$\mathcal{D}:=c^2- 4 D(u_{\pm,b})f'(u_{\pm,b})$, 
$\mathcal{D}>0$ (Node) or $\mathcal{D}<0$ (Focus). The distinction between stable $c>0$ and unstable $c<0$ depends on the sign of the wave speed.\footnote{In case of a standing wave $c=0$, any NF becomes a Centre.}
\begin{table}[h]
\centering
\begin{tabular}{|c | c | c | c | c | c | c |} 
 \hline
 $D(u)$ & $f(u)$ & $(u_-,v_-)$ & $(u_+,v_+)$ & $(u_{b},v_{b})$,  & $(u_{b},v_{b})$,  &  $(u_{b},v_{b})$, \\
  & &  & &  $\alpha<\gamma_1$ & $\gamma_1<\alpha<\gamma_2$  &  $\gamma_2<\alpha$ \\
 \hline
DAD & bistable  & Saddle & Saddle  &  (un)stable NF  & Saddle  & (un)stable NF    \\  
 \hline
\end{tabular}
\caption{ Type of equilibria on critical manifold $S$.}
\label{table:type-eq}
\end{table}
%

\subsection{Folded singularities}\label{sec:fs}

The desingularised system \eqref{eq:desing-1} defines another type of singularity for the reduced problem through $D(u)=0$ which exists on the fold lines $F_{l/r}$ and are known as {\em folded singularities}. In our problem, these folded singularities are given by $v_{f_{l/r}}=-c u_{f_{l/r}}$ where $u_{f_{l/r}}=\gamma_{1/2}$\,, i.e.,
\begin{equation}
(u_{f_{l/r}},u_{f_{l/r}})=(\gamma_{1/2},-c\, \gamma_{1/2})\,.
\end{equation}

The Jacobian of the desingularised problem evaluated at such a folded singularity 
is given by 
\begin{equation}
J=
\begin{pmatrix}
-c & -1 \\
D'(u_{f_{l/r}})f(u_{f_{l/r}}) & 0
\end{pmatrix}
\end{equation}
which has $\mbox{tr }J=-c$, $\det J= D'(u_{f_{l/r}})f(u_{f_{l/r}})$ and $\mathcal{D}=c^2- 4 D'(u_{f_{l/r}})f(u_{f_{l/r}})$.
Hence we have 
for $\det J<0$ a folded saddle (FS), and for $\det J>0$ a folded node (FN) or a folded focus (FF) depending on the discriminant $\mathcal{D}$ being positive or negative.

\begin{table}[h]
\centering
\begin{tabular}{|c | c | c | c | c | c |} 
 \hline
 $\phi(u)$ & $f(u)$ & folded singularity & $\alpha<\gamma_1$ & $\gamma_1<\alpha<\gamma_2$  &  $\gamma_2<\alpha$ \\
 \hline
DAD & bistable & $(u_{f_{l}},v_{f_{l}})$  &  FS &  `stable' FN/FF &  `stable' FN/FF   \\  
DAD & bistable & $(u_{f_{r}},v_{f_{r}})$  &   `stable' FN/FF &  `stable' FN/FF & FS   \\  
 \hline
\end{tabular}
\caption{Type of folded singularities on fold lines $F_l \cup F_r=F$.}
\label{table:type-fs-bi}
\end{table}


\begin{remark}
    The change in type of a folded singularity (FS/FN/FF) under parameter variation coincides with the crossing of the additional equilibrium $(u_b,v_b)$ through the corresponding folded singularity (see Table~\ref{table:type-eq}). This codimension-one phenomenon is known in the GSPT literature as a folded saddle-node type II (FSN II); see. e.g., \cite{szmwex2001}.

    The term `stability' indicates only stability properties for the auxiliary system, i.e., the desingularised problem \eqref{eq:desing-1}. Folded singularities have no associated stability property since corresponding special solutions known as {\em canards} pass through them in finite time, i.e., these canards represent transient phenomena.
\end{remark}

\subsection{Singular heteroclinic orbits} \label{sec:heteroclinics}

The shock-fronted travelling waves that we seek are found as heteroclinic orbits of the four-dimensional dynamical system \eqref{eq:rnd-sys-1-slow} connecting the saddle equilibrium end states $(u_\pm,0,v_\pm,w_\pm)\to (u_\mp,0, v_\mp,w_\mp)$, i.e., we are seeking evasion fronts $u_-=0\to u_+=1$ or invasion fronts $u_-=1\to u_+=0$ of the original travelling front problem.
%
From the point-of-view of GSPT, a key observation is that solutions of the (fast) layer problem \eqref{eq:rnd-layer-1} and the (slow) reduced problem \eqref{eq:red-1} can be concatenated to form singular heteroclinic orbits,
\begin{equation}\label{def:SingHet}
    \Gamma_{het}^\pm = \Gamma_{l/r} \cup \Gamma_\pm \cup \Gamma_{r/l}
\end{equation}
where $\Gamma_{het}^+$ denotes a singular heteroclinic evasion front connecting $u_-=0\to u_+=1$, i.e., $\Gamma_l$ is a slow segment of the unstable manifold of the saddle equilibrium at $u_-=0$ connecting to $u_l$, $\Gamma_+$ is a fast jump connecting $u_l \to u_r$, and  $\Gamma_r$ is a slow segment of the stable manifold of the saddle equilibrium at $u_+=1$ connecting to $u_r$, while $\Gamma_{het}^-$ denotes a singular heteroclinic invasion front connecting $u_-=1\to u_+=0$, i.e., $\Gamma_r$ is a slow segment of the unstable manifold of the saddle equilibrium at $u_-=1$ connecting to $u_r$, $\Gamma_-$ is a fast jump connecting $u_r \to u_l$, and  $\Gamma_l$,  is a segment of the stable manifold of the saddle equilibrium at $u_+=0$ connecting to $u_l$.\\

In Section \ref{sec:singhetconstruction} we will construct such singular heteroclinic orbits, and then describe how heteroclinic orbits of the regularised system \eqref{eq:rnd-sys-1-slow} arise as a codimension-one family of perturbations of these singular connections for $0 < \varepsilon \ll 1$. See \cite{li2021} for details of this construction in the setting of both `pure' viscous relaxation and `pure' Cahn-Hilliard-type regularisation \eqref{eq:varnad}, respectively \eqref{eq:charnad}. Our objective is to synthesize and greatly extend this previous GSPT analysis to the more general PDE model with composite regularisation \eqref{eq:vcharnad}. Another new feature of this extension is a description of how the family of monotone waves terminates via global singular bifurcations. 

\section{Shock selection rules} \label{sec:singhetconstruction}

The asymptotic end states $(u_\pm,0,v_\pm,w_\pm)$ of the 4D composite regularised RND problem \eqref{eq:rnd-sys-1-slow}
are located on saddle branches of the critical manifold (2D layer problem) and the equilibria are also saddles for the 2D reduced problem. This hyperbolic structure persists for the full 4D model, i.e., the fixed asymptotic end-states are saddles with 2 stable and 2 unstable directions.\\

We extend formally the regularised RND problem \eqref{eq:rnd-sys-1-slow} by the dummy wavespeed equation $c_y=0$ and seek 1D intersections of the corresponding 3D centre-stable and the 3D centre-unstable manifold of the two asymptotic end states $(u_\pm,0,v_\pm,w_\pm,c)$ in 5D phase space, which is generic. In particular, we identify how the composite regularisation parameter $a$ picks a unique shock location and wave speed $c$. 

\subsection{Singular fast fronts $\Gamma_\pm$ and generalised shock selection rules}
\label{sec:layer-het}

Recall that we introduced the lumped system parameter $\delta=ac$ in the layer problem \eqref{eq:rnd-layer-1} which takes the composite regularisation parameter $a$ and the wave speed $c$ into account. 

\subsubsection{The $\delta=0$ case} \label{sec:delta0layer}
In this case, the layer problem \eqref{eq:rnd-layer-1},
\begin{equation}\label{eq:rnd-layer-0}
\begin{aligned}
u_{y} &= \hat{u}\\
\hat{u}_{y} &=w+\Phi(u)
\end{aligned}
\end{equation}
is Hamiltonian with 
\begin{equation}\label{eq:hamiltonian}
H(u,\hat{u})=\frac{\hat{u}^2}{2}-\int (w+\Phi(u))du\,.
\end{equation}

Trajectories of this layer problem are confined to level sets of the Hamiltonian \eqref{eq:hamiltonian}, i.e., $H(u,\hat{u})=k$. Possible  trajectories that are able to connect equilibrium points on different branches of the critical manifold $S$ are confined to the saddle branches $S_s^{l/r}$ including the boundaries $F_{l/r}$. 

The corresponding equilibrium points $p_{l/r}=(u_{l/r},0,v_{l/r},-\Phi(u_{l/r}))\in S_s^{l/r} \cup F_{l/r}$ of such connections must fulfill $v_{l}=v_{r}$ and $\Phi(u_{l})=\Phi(u_{r})$
since $v$ and $w$ are constant. 

\begin{remark}\label{rem:w}
This creates a bound on possible $w$-values, $w\in [-\Phi(u_{f-}),-\Phi(u_{f+})]$ where $D(u_{f\mp})=0$, i.e., confined to the region between the local extrema of $\Phi$; see Figure~\ref{fig:jump-zone}.
\end{remark}

Without loss of generality, set $H(u_{l},\hat{u}=0)=0$, i.e., 
$H(u,\hat{u})=\frac{\hat{u}^2}{2}-\int_{u_{l}}^u (w+\Phi(u))du$.
Then $H(u_{r},\hat{u}=0)$ must be equal zero as well
for the existence of a layer connection between these two points. 
This constraint leads to the well-known {\em `equal area rule'} (see, e.g. \cite{pego89}),
\begin{equation}\label{def:equal-area}
\boxed{
    \int_{u_{l}}^{u_{r}} (w_h+\Phi(u))du=0}\,.   
\end{equation}
This rule allows for $S_s^{l/r}$ to $S_s^{r/l}$ connections, but not to the boundaries $F_{l/r}$ or the centre-type middle branch $S_m$. Due to the symmetry $(\hat{u},y)\leftrightarrow (-\hat{u},-y)$ in \eqref{eq:rnd-layer-0}, there exists automatically a pair of such heteroclinic connections for fixed $w=w_h$, i.e., $\Gamma_+(w_h,0):p_{l}\to p_{r}$ and $\Gamma_-(w_h,0):p_{r}\to p_{l}$; see Figure~\ref{fig:het_delta_zero}.

\begin{figure}[t]
    \centering

    \includegraphics[width=9.0cm]{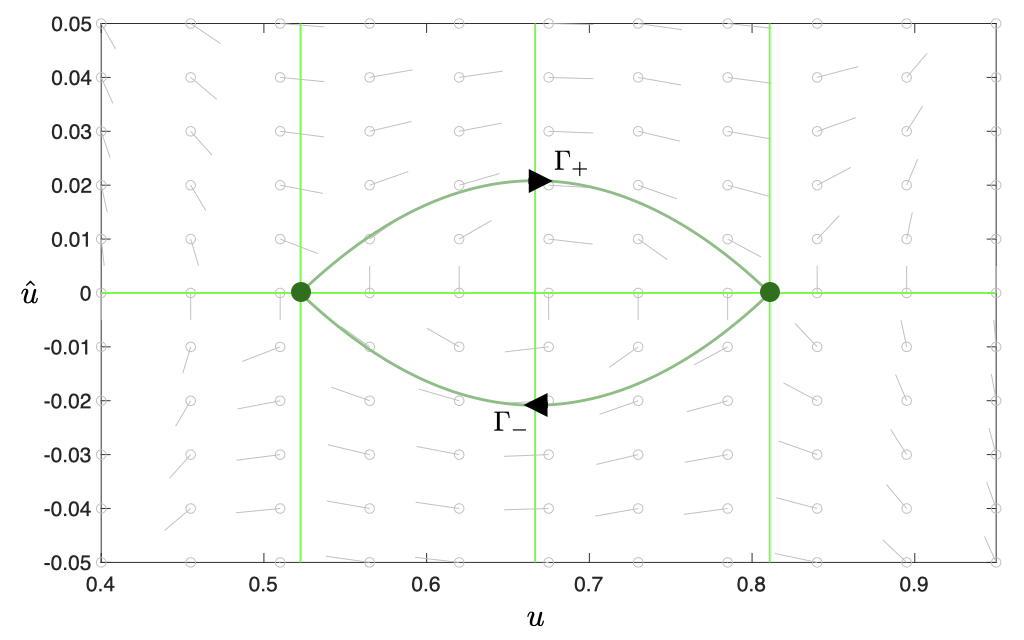}
    \caption{two heteroclinics $\Gamma_+:p_l\to p_r$ and $\Gamma_-:p_r\to p_l$ for $\delta=0$ and $w=w_h\approx -0.5648$ in $(u,\hat u)$-space; other parameter values: $\beta=6$, $\gamma_1=7/12$, $\gamma_2=3/4$\,.} 

    \label{fig:het_delta_zero}
\end{figure}

\begin{remark} \label{rem:standingwaves}
The equal area rule \eqref{def:equal-area} determines the value $w=w_h$ for which this integral vanishes. Since $\delta=ac=0$ there are two possible cases:
for $a=0$, it is independent of the possible wave speed $c\in \mathbb{R}$.
On the other hand, for $c=0$ it is independent of the viscous relaxation regularisation contribution $a\in \mathbb{R}$. Hence, the only shock-fronted standing waves that our regularised model can produce are those satisfying the equal area rule.
\end{remark}

\subsubsection{The small $|\delta|$ case}


Here we apply {\it Melnikov theory} (see, e.g., \cite{Vanderbauwhede_1992,Wechselberger_2002}) to establish heteroclinic connections in the layer problem \eqref{eq:rnd-layer-1} for sufficiently small $|\delta|>0$. 

Define $x=(u,\hat{u})^\top$ and $h(x;w,\delta)=(\hat{u},w+\Phi(u)-\delta \hat u)^\top$ such that the layer problem is given in vector form by
\begin{equation}
x'=h(x;w,\delta)\,,\qquad x\in\mathbb{R}^2\,.
\end{equation}

As shown in section \ref{sec:delta0layer}, this system possesses heteroclinic orbits $\Gamma_\pm(y)$ for $w=w_h$ and $\delta=0$, i.e., $\Gamma_\pm'=h(\Gamma_\pm;w_h,0)$. Let $x=\Gamma_\pm + X$, $X\in\mathbb{R}^2$ which transforms the layer problem to the non-autonomous problem
\begin{equation}
X'=A(y)X +g(X,y;w,\delta)
\end{equation}
with the non-autonomous matrix
$A(y):= D_x h(\Gamma_\pm;w_h,0)$
and the nonlinear remainder
$g(X,y;w,\delta)= h(\Gamma_\pm+X;w,\delta)-h(\Gamma_\pm;w_h,0) - A(y)X$.

Without loss of generality, we define the splitting of the vector space at $y=0$ by 
\begin{equation}
\mathbb{R}^2=\mbox{span}\,\{h(\Gamma_\pm(0);w_h,0)\} \oplus W
\end{equation}
where $W$ is spanned by a solution of the adjoint equation 
$\psi' +A^\top(y)\psi=0$
that decays exponentially for $y\to\pm\infty$; here, this space is one-dimensional and we denote the corresponding solution by $\psi(y)=(\psi_1(y),\psi_2(y))^\top$.

We measure the distance $\Delta\in\mathbb{R}$ between the one-dimensional stable and unstable manifolds emanating from the saddle-equilibria $p_{l/r}$ in a suitable cross section $\Sigma=W$. 
This distance function $\Delta=\Delta(w,\delta)$ depends on the system parameters, and we have $\Delta(w_h,0)=0$. 
%
Melnikov theory establishes the following distance function formula (see Appendix \ref{appendix:smooth})
%
\begin{equation}\label{equ:distance-fct}
\begin{aligned}
\Delta(w,\delta) & =\int_{-\infty}^0(\psi(s)^\top g(X_-(w,\delta)(y),y;w,\delta))ds -
\int_{\infty}^0
(\psi(s)^\top g(X_+(w,\delta)(y),y;w,\delta))ds\\
&=\int_{-\infty}^\infty(\psi(s)^\top g(X(w,\delta)(y),y;w,\delta))ds 
\end{aligned}
\end{equation}
where $X_\pm(w,\delta)(y)$ denotes the corresponding (un)stable manifold segments of the saddle equilibria $p_{l/r}$ from the corresponding saddle equilibrium to the cross section $\Sigma$, and $X$ is the representative of these sets in the relevant domain. 

If, e.g., $D_w \Delta (w_h,0)\neq 0$ then $w=w_h(\delta)=w_h + b \delta +O(\delta^2)$ solves $\Delta(w_h(\delta),\delta)=0$ for $\delta\in (-\delta_0,+\delta_0)$,  $\delta_0>0$. The leading order expansion parameter $b$ is then given by 
\begin{equation}
b=-\frac{D_\delta \Delta (w_h,0)}{D_w \Delta (w_h,0)}\,,
\end{equation}
and these first-order expansion terms of the distance function $\Delta$ are known as first-order Melnikov integrals.

\begin{proposition}
    The first order Melnikov integrals $D_w \Delta (w_h,0)$ and $D_\delta \Delta (w_h,0)$ are nonzero.
\end{proposition}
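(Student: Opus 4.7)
The key observation is that at $(w,\delta) = (w_h, 0)$ the layer problem \eqref{eq:rnd-layer-0} is Hamiltonian, and this structure essentially hands us the decaying adjoint solution. Writing the unperturbed flow as $x' = J\nabla H$ with $J = \bigl(\begin{smallmatrix} 0 & 1 \\ -1 & 0 \end{smallmatrix}\bigr)$, the tangent vector $\Gamma_\pm'(y)$ solves the variational equation $\xi' = A(y)\xi$ with
\begin{equation*}
A(y) = D_x h(\Gamma_\pm; w_h, 0) = \begin{pmatrix} 0 & 1 \\ D(u_\Gamma(y)) & 0 \end{pmatrix}.
\end{equation*}
A direct check shows that $\psi(y) := J\Gamma_\pm'(y)$ then satisfies the adjoint equation $\psi' + A^\top\psi = 0$. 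Explicitly,
\begin{equation*}
\psi(y) = \bigl(w_h + \Phi(u_\Gamma(y)),\ -\hat{u}_\Gamma(y)\bigr)^\top,
\end{equation*}
and since $\Gamma_\pm$ approaches hyperbolic saddles as $y \to \pm\infty$, $\psi$ decays exponentially at both ends, so it spans the one-dimensional complementary space $W$.

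Next, because $X_\pm(w_h, 0)(y) \equiv 0$, the first-order Melnikov integrals only require the partial derivatives of $g$ evaluated at $X = 0$ and $(w,\delta) = (w_h, 0)$. A short computation gives
\begin{equation*}
g(0, y; w, \delta) = h(\Gamma_\pm; w, \delta) - h(\Gamma_\pm; w_h, 0) = \bigl(0,\ (w - w_h) - \delta\, \hat{u}_\Gamma(y)\bigr)^\top,
\end{equation*}
so $\partial_w g\vert_0 = (0,1)^\top$ and $\partial_\delta g\vert_0 = (0,-\hat{u}_\Gamma(y))^\top$. Substituting into \eqref{equ:distance-fct} and using $\hat{u}_\Gamma = u_\Gamma'$,
\begin{align*}
D_w \Delta(w_h, 0) &= \int_{-\infty}^\infty \psi_2(s)\, ds \;=\; -\int_{-\infty}^\infty u_\Gamma'(s)\, ds \;=\; -\bigl(u_\Gamma(\infty) - u_\Gamma(-\infty)\bigr), \\
D_\delta \Delta(w_h, 0) &= -\int_{-\infty}^\infty \psi_2(s)\, \hat{u}_\Gamma(s)\, ds \;=\; \int_{-\infty}^\infty \hat{u}_\Gamma(s)^2\, ds.
\end{align*}

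The first integral evaluates to $\pm(u_l - u_r)$ (the sign depending on whether we follow $\Gamma_+$ or $\Gamma_-$), which is nonzero because the asymptotic endpoints of the layer heteroclinic sit on the distinct saddle branches $S_s^l$ and $S_s^r$ (see Figure \ref{fig:het_delta_zero}). The second integral is strictly positive since $\hat{u}_\Gamma \not\equiv 0$ along a nontrivial heteroclinic between distinct saddles. Both partial derivatives of $\Delta$ are therefore nonzero, as claimed.

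The only step requiring care is identifying the exponentially decaying adjoint solution; without the Hamiltonian identity $\psi = J\Gamma_\pm'$ one would need a separate construction via a fundamental-matrix computation along $\Gamma_\pm$. With $\psi$ in hand, the two Melnikov integrals admit clean geometric interpretations: $D_w\Delta$ measures the horizontal width $u_r - u_l$ of the shock jump, and $D_\delta\Delta$ is a kinetic-energy-type quantity carried by the heteroclinic profile, both of which are manifestly nonvanishing for any nontrivial shock.
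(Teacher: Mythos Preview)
Your proof is correct and follows the same overall strategy as the paper---compute the two Melnikov integrals and argue they are nonzero---but your explicit identification of the decaying adjoint solution via the Hamiltonian trick $\psi = J\Gamma_\pm'$ is a genuine refinement. The paper simply asserts that $\psi_2$ ``does not change sign along $\Gamma_\pm$'' and leaves it at that; your computation shows $\psi_2 = -\hat u_\Gamma$, which makes the sign property obvious (the heteroclinic lives in a single half-plane $\hat u \gtrless 0$) and moreover lets you evaluate both integrals in closed form: $D_w\Delta = \mp(u_r - u_l)$ and $D_\delta\Delta = \int \hat u_\Gamma^2\,ds$. The paper's argument is shorter but relies on an unproven monotonicity claim for $\psi_2$; your argument is slightly longer but self-contained and yields quantitative information (e.g., the sign of the leading coefficient $b$ can be read off directly).
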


\begin{proof}
    The first order Melnikov integrals can be calculated as follows:
\begin{equation}
\begin{aligned}
D_w \Delta (w_h,0) &=
\int_{-\infty}^\infty (\psi(s)^\top D_w h(\Gamma_\pm(s);w_h,0))ds\,, \\ 
D_\delta \Delta (w_h,0) &=
\int_{-\infty}^\infty (\psi(s)^\top D_\delta h(\Gamma_\pm(s);w_h,0))ds\,.
\end{aligned}
\end{equation}
We have $D_w h(\Gamma_\pm(0);w_h,0)=(0,1)^\top$ and, hence,
\begin{equation}
D_w \Delta (w_h,0)=
\int_{-\infty}^\infty (\psi(s)^\top D_w h(\Gamma_\pm(s);w_h,0))ds =
\int_{-\infty}^\infty \psi_2(s)ds\neq 0,
\end{equation}
based on the observation that the $\psi_2$-component does not change sign along $\Gamma_\pm$, i.e., it is a monotone function along $\Gamma_\pm$. Furthermore, the integral is well-defined since $\psi_2(y)$ is decaying exponentially for $y\to\pm\infty$. Hence by the implicit function theorem, $w=w_h(\delta)=w_h + b \delta +O(\delta^2)$ solves $\Delta(w(\delta),\delta)=0$ for $\delta\in (-\delta_0,+\delta_0)$.

We also have 
$D_\delta h(\Gamma_\pm(0);w_h,0)=(0,-\hat u(y))^\top$ and, hence,
\begin{equation}
D_\delta \Delta (w_h,0)=
\int_{-\infty}^\infty (\psi(s)^\top D_\delta h(\Gamma_\pm(s);w_h,0))ds =
- \int_{-\infty}^\infty \hat{u}(s) \psi_2(s)ds\neq 0\,,
\end{equation}

based on a similar observation as above, i.e., both terms do not change sign under the variation along $\Gamma_\pm$. 
Hence, 
\begin{equation}
b=\frac{D_\delta \Delta (w_h,0)}{D_w \Delta (w_h,0)}=
-\frac{\int_{-\infty}^\infty \hat{u}(s) \psi_2(s)ds}{\int_{-\infty}^\infty \psi_2(s)ds} \neq 0\,,
\end{equation} 
and we have a leading order affine solution $w(\delta)$ to $\Delta (w,\delta)=0$ near $(w_h,0)$.
\end{proof}

This result confirms the transverse crossing of the heteroclinic branches near $(0,w_h)$ as shown in Figure~\ref{fig:bif-complete}.

\subsubsection{The general $\delta\neq 0$ case}

Heteroclinic orbits $\Gamma^\pm$ of the 2D layer problem \eqref{eq:rnd-layer-1} connecting $S_s^{l/r}$ to $S_s^{r/l}$ are confined to the upper ($\Gamma_+$) or lower ($\Gamma_-$) half-plane in $(u,\hat u)$-space. In these half-planes, the $u$-dynamics is monotone. Hence, all heteroclinics $\Gamma_\pm$ are graphs over the $u$-coordinate chart in $(u,\hat u)$-space, i.e., $\Gamma_\pm : \hat u(u):\, u\in(u_{l},u_{r})$. We consider $\Gamma_+$ here (the same works for $\Gamma_-$). Such a heteroclinic orbit $\hat u(u)$ must fulfill
\begin{equation}
\begin{aligned}
\frac{d\hat u}{du} &=\frac{w+\Phi(u)-\delta \hat u}{\hat u}\,,\quad\forall u\in(u_{l},u_{r})\\
\implies \frac{d}{du}(\frac{\hat u^2}{2}) &=
\frac{d}{du}\int(w+\Phi(u)-\delta \hat u)du\,,\quad\forall u\in(u_{l},u_{r})\\
\implies \frac{\hat u^2}{2} &=
\int_{u_l}^u (w+\Phi(u)-\delta \hat u)du\,,\quad\forall u\in(u_{l},u_{r})\,.
\end{aligned}
\end{equation}

For $u\to u_l$, the last line is fulfilled since $\hat u(u_l)=0$. For $u\to u_r$, where $\hat u(u_r)=0$, we obtain a condition for the existence of a heteroclinic orbit,

\begin{equation}\label{eq:general-equal-area}
\boxed{
\int_{u_l}^{u_r} (w+\Phi(u))du=
\delta \int_{u_l}^{u_r}  \hat u(u)\, du \,,
}
\end{equation}

which, for $\delta=0$, gives the equal area rule as established previously. For $\delta\neq 0$ this formula provides a generalised `equal area rule', i.e., the left hand side must move away from its `equal area' position given for $w=w_h(0)$ to counteract the right hand side contribution. This gives $w=w_h(\delta)$ for heteroclinic connections defined by the generalised equal-area rule \eqref{eq:general-equal-area}. Figure \ref{fig:het_delta_08}~(a) shows an example of a heteroclinic orbit for $\delta\neq 0$.\\ 

For sufficiently large $|\delta|\ge \delta_{m}$, $w$ will necessarily reach its limit $w_{sn}$ where one of the saddle equilibria $p_{l/r}$ goes through a saddle-node bifurcation. Until then, the heteroclinic connection is along the hyperbolic direction, but afterwards it will be along the centre direction which is non-unique and, hence, replaces the codimension-one role of the $w$-variation. Thus, for fixed $w=w_{sn}$ and for sufficiently large $|\delta|>\delta_{m}$, there always exists a heteroclinic orbit located at the boundary of the admissible jump zone. Figure \ref{fig:het_delta_08}~(b) shows an example of a heteroclinic orbit for $\delta\approx\delta_m$ connecting $S_s^r$ to $F_l$.

\begin{figure}[t]
\begin{subfigure}{0.5\textwidth}
\includegraphics[width=8.4cm]{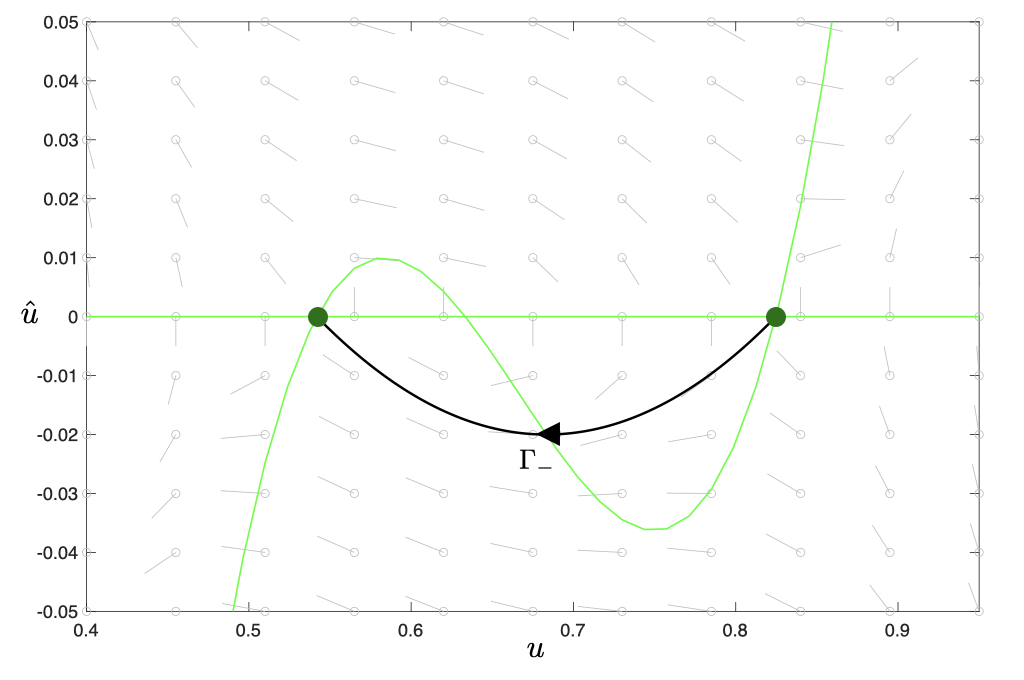} 
\caption{}
\end{subfigure}
\begin{subfigure}{0.5\textwidth}
\includegraphics[width=8.75cm]{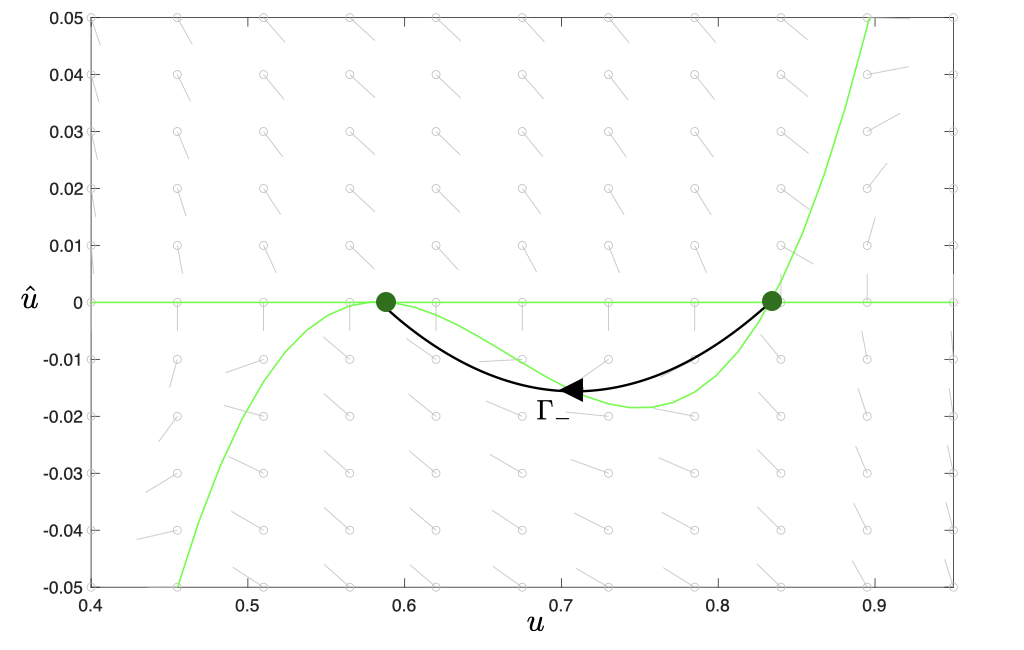}
\caption{}
\end{subfigure}
    \caption{(a) heteroclinic $\Gamma_-$ for $\delta=0.1$ and $w=w_h(\delta)\approx -0.5661$, (b) border case heteroclinic $\Gamma_-$ for $\delta=\delta_m\approx 0.248$ and $w=w_{sn}\approx -0.5671$; 
    other parameter values: $\beta=6$, $\gamma_1=7/12$, $\gamma_2=3/4$\,.}
    \label{fig:het_delta_08}
\end{figure}

\begin{remark}
For $w=w_{sn}$, the left-hand side of the generalised equal-area rule \eqref{eq:general-equal-area} is fixed. One concludes that for sufficiently large $|\delta|>\delta_{m}$, there exists a $\hat u(u)$ that fulfills the generalised equal are rule, i.e., $\hat u(u)$ fixes the right hand side $\delta\int \hat u\, du$ to the correct/desired value.  

\end{remark}

\begin{figure}[t]
    \centering
    \includegraphics[width=9.0cm]{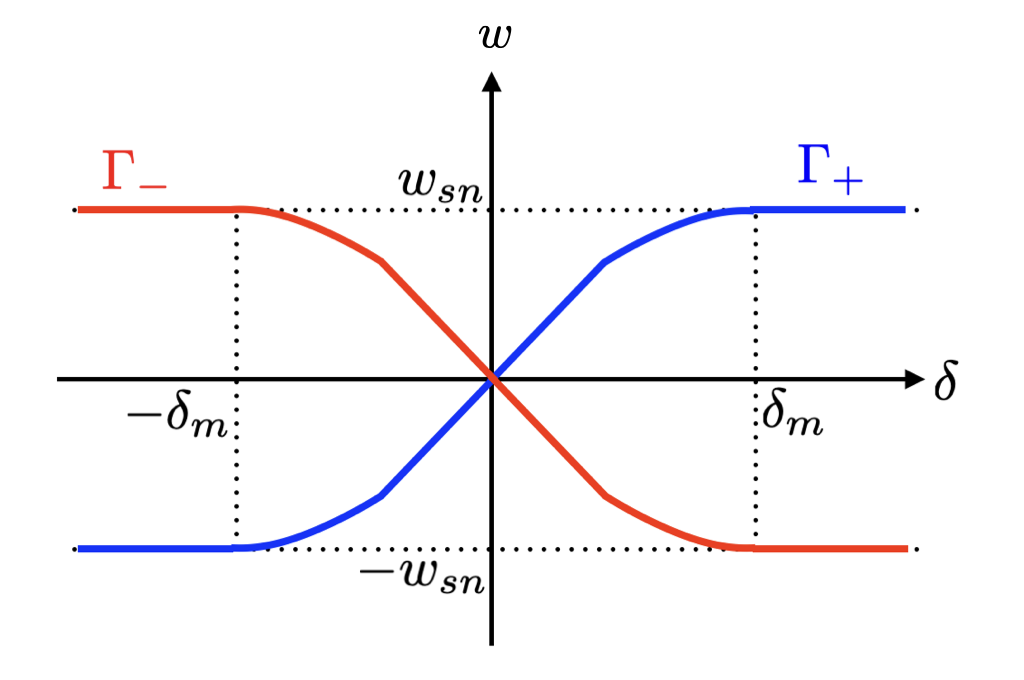}
    \caption{Sketch of complete bifurcation diagram for heteroclinic connections $\Gamma_\pm$ in $(\delta,w)$-space centered at $(w_h(0),0)$. }
    \label{fig:bif-complete}
\end{figure}

Figure~\ref{fig:bif-complete} summarizes our results on the existence of shocks in the regularised RND model, i.e., the solution branches of $\Delta(w,\delta)=0$. 

Under $\delta$-variation, the shock connection varies continuously in the `jump zone' between the height specified by $w=w_h(0) = -\Phi(u_{infl})$, where $u_{infl}$ denotes the inflection point of the cubic (corresponding to the equal area rule), and a height specified by $w = w_{sn}= -\Phi(F_{l,r})$, where $F_{l,r}$ denote the $u$-values of the fold points of the critical manifold. One important insight here is that viscous relaxation is the dominant regularising effect for $|\delta|>|\delta_m|$
for shock location selection (in the layer problem).

\subsection{Concatenating singular heteroclinic orbits $\Gamma_{het}^\pm$}\label{sec:concat}
%
According to our analysis in the layer problem, we find a curve $w=w_h(\delta)$ in $(\delta,w )$ space for which there exists singular fast jumps $\Gamma_\pm$ connecting the outer two branches of the critical manifold; see 
Figure~\ref{fig:bif-complete}. 
In the following we aim to concatenate these fast jump segments $\Gamma_\pm$ with slow segments $\Gamma_{l/r}$ of the reduced problem that connect to the given end states $u_\pm$ and, thus, obtain singular heteroclinic orbits $\Gamma_{het}^\pm$ \eqref{def:SingHet} representing singular shock fronted travelling (or standing) waves of our composite regularised RND problem \eqref{eq:vcharnad}.\\

We will start with the construction of singular heteroclinic connections in the `Cahn-Hilliard'-type regularisation limit  $a=0$. Then, we will vary $a$ to produce new families of shock-fronted travelling waves under composite regularisation $a\neq 0$. The system parameters $\beta,\kappa > 0$ are always regarded as fixed for simplicity.

\subsubsection{Singular standing and slowly-moving shocks: the $a = 0$ case} \label{sec:melnikovreduced}

As discussed in Section~\ref{sec:formalshocks}, the locus of fully symmetric standing waves is  specified in $(\gamma_1,\gamma_2,\alpha,c)$-parameter space  by the line segment
\begin{equation}
\mathcal{L} = \{(\gamma_1,1-\gamma_1,1/2,0): \gamma_1 \in (0,1/2)\}.
\end{equation}
We apply a piecewise-smooth variant of the Melnikov method (see Appendix~\ref{sec:pwmelnikov}) to show rigorously that $\mathcal{L}$ locally separates (singular) invasion and evasion fronts. We consider a parameter variation in $(\alpha,c)$-space, where we remind the reader that $\alpha$ determines the location of the middle root of the reaction term $f(u)$ given by \eqref{f:bistable}. We now prove the following.

\begin{proposition} \label{prop:singhet}
For each $\gamma_1 \in (0,1/2)$, there exists $h > 0$ so that a one-parameter family of singular heteroclinic orbits is given locally by a  path $(\gamma_1,\gamma_2,\alpha,c) = (\gamma_1,1-\gamma_1,\alpha,c(\alpha))$ in parameter space, where $\alpha \in (1/2-h,1/2+h)$, $c(1/2) = 0$, and $c'(1/2) < 0$. 
\end{proposition}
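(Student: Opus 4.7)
The plan is to build a scalar distance function in the slow $(u,v)$-chart of the desingularised system~\eqref{eq:desing-1} that measures whether $W^u(p_-)$ and $W^s(p_+)$ can be joined by the fast jump $\Gamma_\pm$, and then apply the piecewise-smooth Melnikov machinery of Appendix~\ref{sec:pwmelnikov} to locate its zero set near the symmetric standing wave of Figure~\ref{fig:symmstanding}. Since $a = 0$ forces $\delta = 0$, Section~\ref{sec:delta0layer} guarantees that the jump heights $u_{l/r}$ on the saddle branches $S_s^{l/r}$ are fixed by the equal-area rule and depend only on $\gamma_1$, and that the fast jump is rigid in the slow variable $v$. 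Accordingly, set
\begin{equation}
\Delta(\alpha, c) := v_u(\alpha, c) - v_s(\alpha, c),
\end{equation}
where $v_u$ (respectively $v_s$) is the $v$-coordinate at which $W^u(p_-)$ first meets $\{u = u_r\}$ (respectively $W^s(p_+)$ first meets $\{u = u_l\}$). A singular heteroclinic connection exists precisely when $\Delta(\alpha, c) = 0$.

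The first step is to verify $\Delta(1/2, 0) = 0$. At this parameter the desingularised system is Hamiltonian (Remark~\ref{rem:hamiltonian}), and because $D(u)$ is symmetric and $f(u; 1/2)$ antisymmetric about $u = 1/2$ when $\gamma_2 = 1 - \gamma_1$, it is also invariant under the involution $(u, v, \zeta) \mapsto (1-u, v, -\zeta)$. This involution swaps $p_-$ with $p_+$ and, by reversing time, maps $W^u(p_-)$ to $W^s(p_+)$, forcing $v_u(1/2, 0) = v_s(1/2, 0) =: v_*$. Writing the perturbed vector field as $F_0 + c\,G_c + (\alpha - 1/2)\,G_\alpha + \mathrm{h.o.t.}$ with $G_c = (-u, 0)^\top$ and $G_\alpha = (0,\, -\kappa u(1-u) D(u))^\top$, a standard Hamiltonian Melnikov computation yields
\begin{equation}
\partial_\bullet \Delta(1/2, 0) = -\frac{1}{v_*}\bigl(M_\bullet^- + M_\bullet^+\bigr), \qquad M_\bullet^\pm = \int (\nabla \tilde H \cdot G_\bullet)\,d\zeta,
\end{equation}
where $\bullet \in \{c, \alpha\}$ and each integral is taken along the unperturbed slow segment adjacent to $p_\pm$. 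No jump correction appears at leading order because the fast jump is rigid in $v$ when $\delta = 0$; the piecewise-smooth adaptation reduces here to summing contributions from the two smooth segments.

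The main obstacle is the sign analysis, which the involution symmetry makes tractable. The integrand for $\alpha$ is $\phi_\alpha = \kappa v\,u(1-u) D(u)$, invariant under $u \mapsto 1 - u$; the change of variables induced by the symmetry gives $M_\alpha^+ = M_\alpha^-$, and on the unperturbed unstable segment (where $u \in (u_r, 1)$, $v > 0$, $D > 0$) the integrand has definite positive sign. The integrand for $c$ is $\phi_c = u\,D(u) f(u; 1/2)$, which is \emph{not} invariant—it picks up an extra minus sign under $u \mapsto 1 - u$ via the antisymmetry of $f$—and the symmetry-induced substitution in $M_c^+$ combines with $M_c^-$ to produce $\int (2u-1) D(u) f(u; 1/2)\,d\zeta$ over a single segment; on $u \in (u_r, 1)$ all three factors are positive so this integral is positive as well. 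With $v_* > 0$, both partial derivatives are nonzero and negative; the implicit function theorem then yields a $C^1$ curve $c(\alpha)$ through $(\alpha, c) = (1/2, 0)$, and the ratio $c'(1/2) = -\partial_\alpha \Delta / \partial_c \Delta$ is strictly negative, as claimed.
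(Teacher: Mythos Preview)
Your argument is correct and arrives at the same conclusion as the paper, but by a genuinely different route. The paper first translates the two slow segments onto a common section by shifting the $S_s^r$ piece leftward by $u_r-u_l$, and then applies its general piecewise-smooth Melnikov formula (Lemma~\ref{lem:pwmelnikov}) built from adjoint solutions $\psi_\pm$; the sign analysis proceeds by tracking the signs of the individual components $\psi_{\pm,1},\psi_{\pm,2}$ and invoking a reflection antisymmetry of $\psi_{\pm,1}$ across the shock. You instead keep the two sections $\{u=u_l\}$ and $\{u=u_r\}$ separate and exploit the Hamiltonian structure of the $c=0$ desingularised flow directly: the energy change along each slow leg gives $\partial_\bullet\Delta=-(M_\bullet^-+M_\bullet^+)/v_*$ with $M_\bullet^\pm=\int\nabla\tilde H\cdot G_\bullet\,d\zeta$, and the involution $(u,v,\zeta)\mapsto(1-u,v,-\zeta)$ lets you fold $M_\bullet^+$ back onto the $p_-$ leg, producing the clean integrands $2\kappa v\,u(1-u)D(u)$ and $(2u-1)D(u)f(u;1/2)$ whose signs are transparent on $(u_r,1)$.

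What each approach buys: yours is shorter and more self-contained for this particular symmetric, Hamiltonian base point---no adjoint solutions, no piecewise-smooth machinery, just energy bookkeeping and a change of variables. The paper's approach is the one that generalises: it does not need the unperturbed problem to be Hamiltonian, and it is the natural vehicle for the new piecewise-smooth Melnikov formula developed in Appendix~\ref{sec:pwmelnikov}, which is one of the advertised contributions. Two small points worth making explicit in your write-up: (i) the jump endpoints $u_{l/r}$ are genuinely fixed under $(\alpha,c)$-variation because the equal-area rule depends only on $\Phi$, which justifies the absence of a boundary correction; and (ii) $u_r>\gamma_2=1-\gamma_1>1/2$ ensures $2u-1>0$ on the $p_-$ leg, which you use implicitly.
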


To set up the piecewise-smooth Melnikov calculation, we first write down the piecewise-defined planar system of the form \eqref{eq:piecewiseeqns}, whose trajectories are topologically conjugate to those of \eqref{eq:desing-1} subject to the equal area rule shock condition. Denoting $x  = (u,v)$ and $\mu = (\mu_1,\mu_2) = (\alpha,c)$, we have
\begin{equation} \label{eq:pwstanding}
\begin{aligned}
\dot{x} &= \begin{cases}
h_-(x;\alpha,c) := g(x; \mu)& \text{ if } u < u_l\\
h_+(x;\alpha,c) := g(x+(u_r-u_l,0)^T;\mu)& \text{ if } u > u_l.
\end{cases}
\end{aligned}
\end{equation}

\begin{figure}
\centering
 \includegraphics[width=12cm]{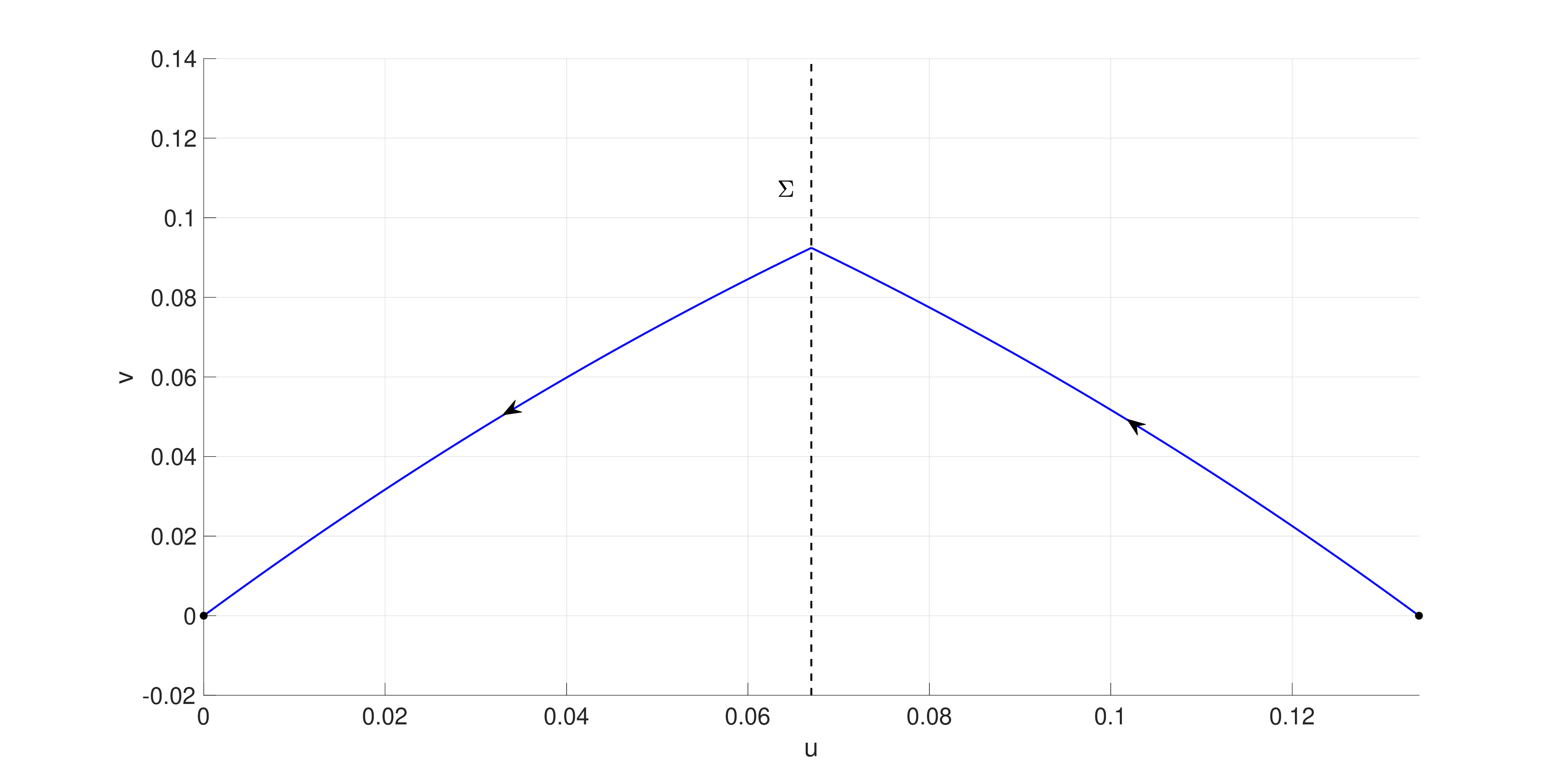}
\caption{Piecewise-smooth heteroclinic connection obtained by shifting the segment of the corresponding singular heteroclinic orbit (see Figure \ref{fig:symmstanding}) on $S^r_s$ to the left by $u_r - u_l$.}
    \label{fig:symmstanding-melnikov}
\end{figure}

We define $\Sigma$ as a suitable compact segment of $\{u = u_L\}$ intersecting the piecewise-smooth heteroclinic orbit $\gamma(t)$, which is constructed by shifting the right portion of the singular heteroclinic orbit in Figure \ref{fig:symmstanding} to the left by $(u_r-u_l)$ and then selecting the phase so that $\gamma(0)\in \Sigma$; see Figure \ref{fig:symmstanding-melnikov}. 
 Define a function $\Delta(\mu)$ measuring the distance between the (un)stable manifolds of the corresponding saddle points on $\Sigma$. 
Letting $\mu_0 = (\alpha_0,c_0) := (1/2,0)$, we have $\Delta(\mu_0) = 0.$
We seek to apply the implicit function theorem to obtain a local path 
\begin{equation}
c=c(\alpha) = b(\alpha-1/2)+ \mathcal{O}((\alpha-1/2)^2)
\end{equation}
so that there exists $h>0$ so that $\Delta(\alpha,c(\alpha)) = 0$ for $\alpha \in (1/2-h,1/2+h)$, i.e., it suffices to show that $D_c \Delta(\mu_0) \neq 0$. The corresponding leading-order coefficient is then given by
\begin{equation} \label{eq:pwleadingordercoeff}
\begin{aligned}
b &= -\frac{D_{\alpha} G(\mu_0)}{D_{c}G(\mu_0)},
\end{aligned}
\end{equation}

where $G(\mu)$ denotes the component of $\Sigma(\mu) \in \mathbb{R}^2$ evaluated in the $+x_2$ direction. Our goal is now to evaluate the partial derivative terms on the right-hand side of \eqref{eq:pwleadingordercoeff}, which we accomplish by using the following lemma.

\begin{lemma} \label{lem:pwmelnikov}
The leading-order terms $D_{\mu_i} G(\mu_0)$ in \eqref{eq:pwleadingordercoeff} are computed according to \eqref{eq:melnikovintegralspw2}, i.e. we have the corresponding formulas for the (leading-order) Melnikov integrals:

\begin{equation}
\label{eq:melnikovintegralspw2body}
\begin{aligned}
D_{\mu_i}G(\mu_0) &= \frac{1}{v_1^-}\int_{-\infty}^0 \psi_-(s)^{\top}  \frac{\partial h_-}{\partial \mu_i}(\gamma_-(s),s;\mu_0)\,ds
+ \frac{1}{v_1^+}\int_{0}^{\infty} \psi_+(s)^{\top}  \frac{\partial h_+}{\partial \mu_i}(\gamma_+(s),s;\mu_0)\,ds,
\end{aligned}
\end{equation}

where $\psi_{\pm}(s)$ denote a pair of adjoint solutions defined for $t \leq 0$ resp. $t \geq 0$ on the corresponding segment of the piecewise-smooth heteroclinic orbit $\gamma_0(t)$ with normalised initial conditions $\psi_{\pm}(0)$, and $v_{1,\pm}$ denote the $x_1$ components of the normalised vector fields $(v_{1,\pm},v_{2,\pm})$ evaluated at the intersection point $x = x_* \in \Sigma$ of the piecewise-smooth heteroclinic orbit $\gamma_0$ at $t = 0$, i.e.

\begin{equation} \label{eq:normalisedvfields}
\begin{aligned}
(v_{1,-},v_{2,-}) &= h_-(x_*,\mu_0)/||h_-(x_*,\mu_0)|| \text{~and}\\
(v_{1,+},v_{2,+}) &= h_+(x_*,\mu_0)/||h_+(x_*,\mu_0)||.
\end{aligned}
\end{equation}
\end{lemma}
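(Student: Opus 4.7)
The plan is to cast Lemma~\ref{lem:pwmelnikov} as a direct application of the piecewise-smooth Melnikov formula developed in Appendix~\ref{sec:pwmelnikov}, by verifying its hypotheses for the system \eqref{eq:pwstanding}. First, I would confirm the transversality and smoothness prerequisites at the base parameter $\mu_0=(1/2,0)$: the piecewise-smooth heteroclinic $\gamma_0(t)$ crosses the switching manifold $\Sigma\subset\{u=u_L\}$ at a single point $x_*=\gamma_0(0)$, and both $h_\pm$ are $C^1$ in $(x,\mu)$ on their respective half-planes. From Section~\ref{sec:heteroclinics} the fast orbits $\Gamma_\pm$ are graphs over $u$ on the open half-planes $\hat u>0$ and $\hat u<0$, so the $u$-component of each $h_\pm(x_*;\mu_0)$ is strictly nonzero; this yields $v_{1,\pm}\neq 0$ and transverse crossing of $\Sigma$ from both sides, which is what makes the saltation-type prefactors in \eqref{eq:normalisedvfields} well-defined.

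Second, I would set up the variational problems on each half-trajectory by writing $x=\gamma_\pm(t)+X_\pm(t;\mu)$ and linearising to obtain $\dot X_\pm=A_\pm(t)X_\pm+g_\pm(X_\pm,t;\mu)$ with $A_\pm(t):=D_x h_\pm(\gamma_\pm(t);\mu_0)$. The adjoint systems $\dot\psi_\pm+A_\pm(t)^\top\psi_\pm=0$ each admit a one-dimensional space of solutions decaying exponentially as $t\to\mp\infty$, which I would normalise following the appendix (orthogonality to $h_\pm(x_*;\mu_0)$ at $t=0$ together with a unit transverse component along $\Sigma$). Existence and exponential decay of such $\psi_\pm$ follow from the hyperbolic saddle structure at the asymptotic endpoints $p_{l/r}$ in the (piecewise) desingularised flow, already established in Section~\ref{sec:heteroclinics}.

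Third, I would measure the separation on $\Sigma$ between the perturbed unstable manifold branch emanating from $p_l$ and the perturbed stable manifold branch arriving at $p_r$, via a variation-of-constants argument run separately on $t\in(-\infty,0]$ and $t\in[0,\infty)$. The key new feature relative to the classical smooth Melnikov calculation is that, under a perturbation in $\mu$, the two one-sided perturbed trajectories first reach $\Sigma$ at slightly different times; projecting their endpoints back onto $\Sigma$ along the flow contributes exactly the prefactors $1/v_{1,\pm}$ from \eqref{eq:normalisedvfields}, which are the one-sided analogues of a saltation correction. Testing the resulting inhomogeneous variational equations against $\psi_\pm$ reduces each half-contribution to the integral of $\psi_\pm^\top\,\partial_{\mu_i}h_\pm$ along $\gamma_\pm$, and summing gives \eqref{eq:melnikovintegralspw2body}. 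The main obstacle is the bookkeeping in this third step: one must verify carefully that the first-order contribution from the perturbed crossing times enters only through the $1/v_{1,\pm}$ factors, and that the normalisation of $\psi_\pm$ kills all would-be boundary terms at $t=0$, so that no additional jump contributions survive. Once this is checked, the appendix formula applies verbatim and the lemma is established.
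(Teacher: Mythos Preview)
Your overall strategy---set up the one-sided variational and adjoint problems along $\gamma_\pm$, run variation-of-constants, and project the perturbed (un)stable manifolds onto the fixed vertical section $\Sigma$ to pick up the $1/v_{1,\pm}$ prefactors---is the same as the paper's derivation in Appendix~\ref{sec:pwmelnikov}. The paper phrases the projection step geometrically (via the angles $\alpha_\pm$ between $\psi_\pm(0)$ and $\Sigma$, obtaining $1/\cos(\alpha_\pm)=1/v_{1,\pm}$), while you phrase it as a crossing-time/saltation correction; these are equivalent descriptions of the same computation.

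However, there is a genuine confusion in your hypothesis-checking that you must fix: you are mixing up the layer problem with the reduced problem. The system~\eqref{eq:pwstanding} to which Lemma~\ref{lem:pwmelnikov} applies is built from the \emph{desingularised slow flow}~\eqref{eq:desing-1} in the $(u,v)$-plane, not from the layer flow in the $(u,\hat u)$-plane. Consequently, your argument for $v_{1,\pm}\neq 0$ (citing that the fast orbits $\Gamma_\pm$ are graphs over $u$ in the half-planes $\hat u\gtrless 0$) is about the wrong object: the relevant first component here is $u_\zeta=-(v+cu)$, and at $\mu_0=(1/2,0)$ transversality to $\{u=u_l\}$ follows because the standing-wave crossing occurs at $v\neq 0$ (see Fig.~\ref{fig:symmstanding}). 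Likewise, the hyperbolic saddles whose (un)stable manifolds you track are the reduced-flow equilibria $p_\pm$ at $u=0,1$ (Table~\ref{table:type-eq}), not the layer-problem endpoints $p_{l/r}$. Once you redirect these verifications to the correct system, your plan goes through and coincides with the paper's proof.
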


\begin{proof}[Proof of Lemma \ref{lem:pwmelnikov}]
See Appendix \ref{sec:pwmelnikov}.
\end{proof}

\begin{proof}[Proof of Prop. \ref{prop:singhet}]
The Melnikov integrals in \eqref{eq:melnikovintegralspw2body} involve the partial derivatives $D_{\alpha} g(u,v)$ and $D_{c} g(u,v)$. For $u<u_L$, we have

\begin{equation}
\begin{aligned}
D_{\alpha} g(u,v) &= (0,\kappa u(u-1)D(u))\\
D_{c} g(u,v) &= (-u,0),
\end{aligned}
\end{equation}

and similar expressions can be derived for $u > u_l$ using the piecewise-smooth problem \eqref{eq:pwstanding}. For $(\alpha,c) = (1/2,0)$, we define a pair of exponentially decaying solutions to the relevant adjoint problem $\psi_{\pm}$ on either segment $\{u<u_L\}$ and $\{u>u_l\}$, with the orientation at $t = 0$ chosen so that the first components satisfy

\begin{equation*}
    \begin{aligned}
        \psi_{-,1}(s) >0 &\text{ for } s < 0 \text{ and}\\
        \psi_{+,1}(s) <0 &\text{ for } s \geq 0,
    \end{aligned}
\end{equation*}

i.e., the adjoint solution points to the right on $S^{r}_s$ and to the left on $S^{l}_s$. We highlight that

\begin{equation*}
    \begin{aligned}
        \psi_{-,1}(s) = - \psi_{+,1}(-s) 
    \end{aligned}
\end{equation*}

for each $s < 0$; this property is inherited from the vertical reflection symmetry 

\begin{equation*}
h_{-,2}(u,v;1/2,0) = -h_{+,2}(2u_l-u,v;1/2,0)
\end{equation*}

satisfied by the vector fields for $u<u_l$.\\

We now compute the Melnikov integrals defined by \eqref{eq:melnikovintegralspw2body}. Observe that $v_1^{-} = v_1^{+}$ at the intersection point $\gamma(0)$, and hence it suffices to compute the `classical' Melnikov integrals to obtain the leading-order coefficient \eqref{eq:pwleadingordercoeff} since the common prefactor $(1/v_1^{\pm})$ cancels upon evaluating the ratio.\\

Let us denote by $\tilde{u} = u + (u_r-u_l)$ the value of $u$ shifted by the shock segment. Dropping the common prefactor, the corresponding Melnikov integral for $D_{\alpha}\Delta(\mu_0)$ is

\begin{equation*}
    \begin{aligned}
        & \int_{-\infty}^0 \psi_-(s)^\top D_{\alpha}  h_-(\gamma_-(s);1/2,0)\,ds + \int_{0}^{\infty} \psi_+(s)^\top D_{\alpha}  h_+(\gamma_+(s);1/2,0)\,ds\\
         =&         \int_{-\infty}^0 \psi_{-,2}(s) \kappa u(s)(u(s)-1)D(u(s))\,ds + \int_{0}^{\infty} \psi_{+,2}(s) \kappa \tilde{u}(s)(\tilde{u}(s)-1)D(\tilde{u}(s))\,ds < 0
    \end{aligned}
\end{equation*}

since $\psi_{\pm,2}(s)>0$, $\kappa u(s)(u(s)-1) D(u(s))<0$, and $\kappa \tilde{u}(s)(\tilde{u}(s)-1)D(\tilde{u}(s))<0$ along the heteroclinic connection.\\

The remaining Melnikov integral for $D_{c} \Delta (\mu_0)$ is

\begin{equation*}
    \begin{aligned}
        & \int_{-\infty}^0 \psi_-(s)^\top D_{c}  h_-(\gamma_-(s);1/2,0)\,ds + \int_{0}^{\infty} \psi_+(s)^\top D_{c}  h_+(\gamma_+(s);1/2,0)\,ds\\
         =&        - \left(\int_{-\infty}^0 \psi_{-,1}(s) u(s)\,ds  +\int_{0}^{\infty} \psi_{+,1}(s) \tilde{u}(s)\,ds\right).
    \end{aligned}
\end{equation*}

In this case, observe that the sign of $\psi_{\pm,1}(s)$ does change across the shock, so we must inspect the integrands more closely. Using the fact that the horizontal components $\psi_{\pm,1}$ have a reflection antisymmetry, while the corresponding measure $\tilde{u}(s)\,ds$ is unevenly weighted toward the heteroclinic segment on $u > u_L$, we conclude that $D_{c} \Delta (\mu_0)<0$ as well. Hence, from \eqref{eq:pwleadingordercoeff}  we have that $b<0$. The proof is completed by noting that $c'(1/2) = b$.
\end{proof}

This result implies that invasion fronts (with $c>0$) emerge along a locally affine branch as $\alpha$ is varied below $\alpha = 1/2$, resp. evasion fronts ($c<0$) for $\alpha > 1/2$. By symmetry, a reflected connection from $u=u_-$ to $u=u_+$ coexists for $c=0$. An analogous Melnikov calculation gives a distinct local branch of bifurcations connecting $u=u_+$ to $u=u_-$.  The bifurcation structure is therefore locally similar to the branch crossing for the fast shock fronts depicted in Figure \ref{fig:bif-complete}. The invasion portion of one of these locally affine branches (i.e. $\alpha < 1/2,\,c >0$) is depicted in Figure \ref{fig:continuation}(a) for an example parameter set.

\begin{figure}[t]
\begin{subfigure}{0.5\textwidth}
\includegraphics[width=8.75cm]{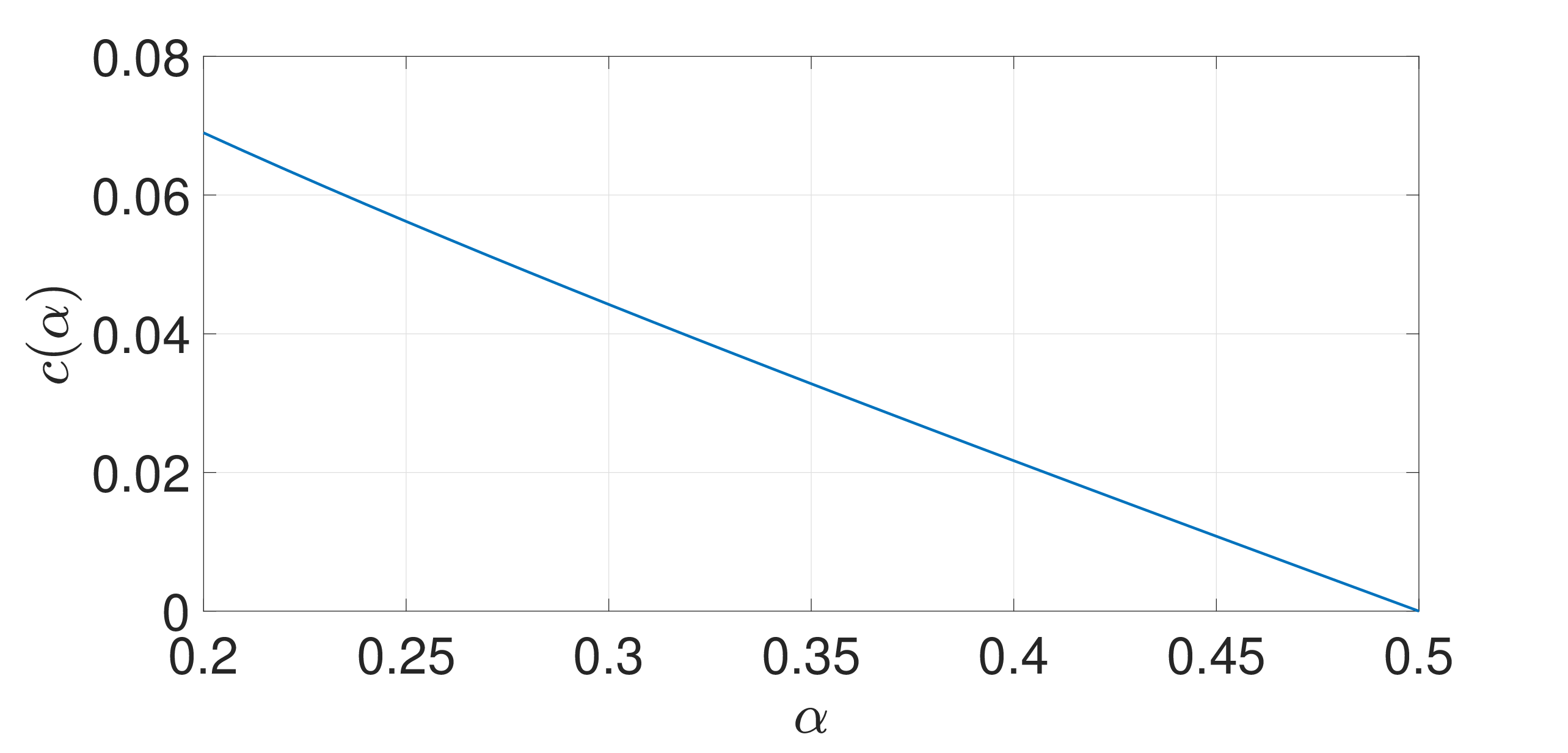} 
\caption{}
\end{subfigure}
\begin{subfigure}{0.5\textwidth}
\includegraphics[width=8.75cm]{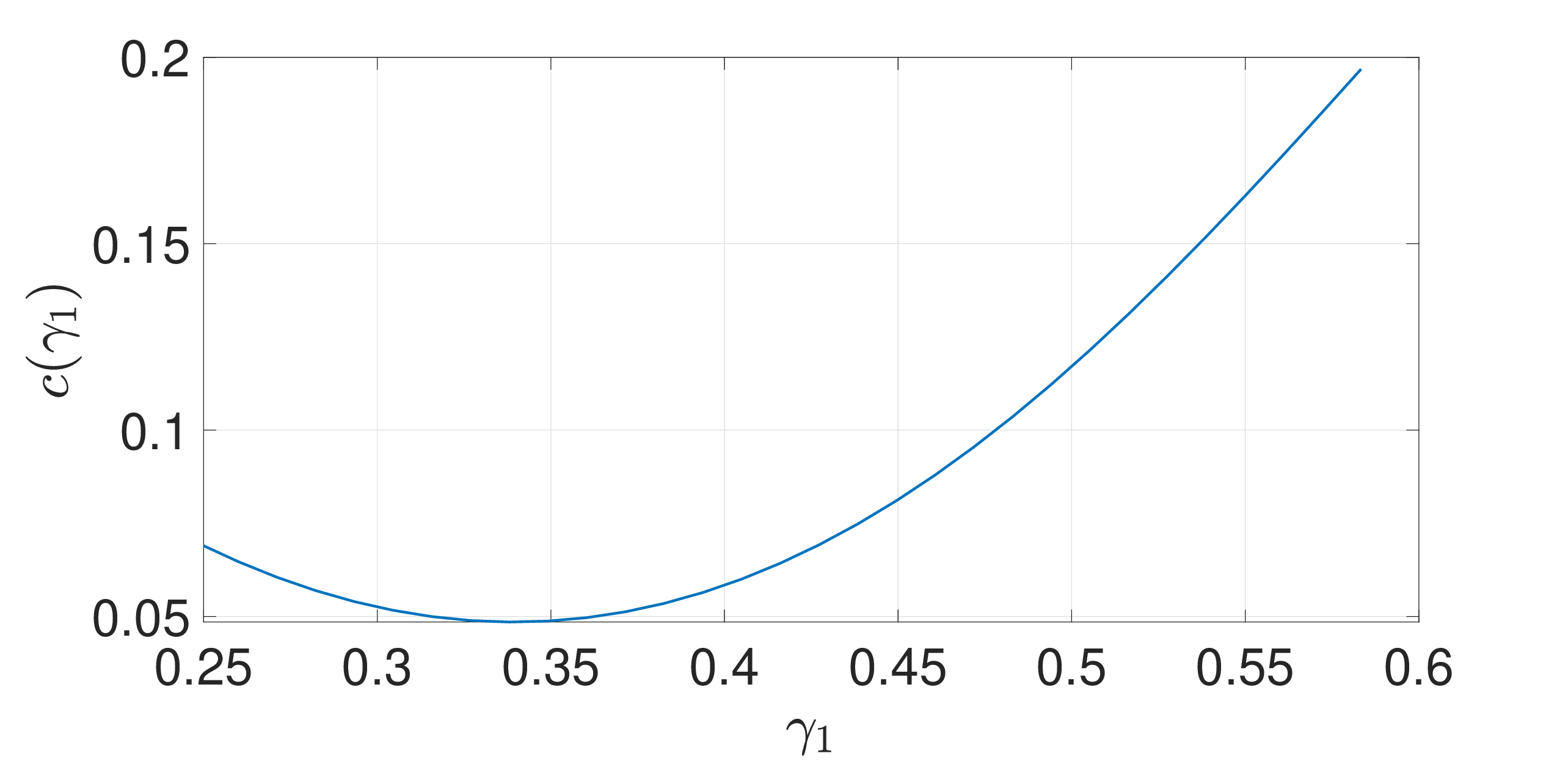}
\caption{}
\end{subfigure}
    \caption{One-parameter continuations of a singular heteroclinic orbit joining the saddle points at $u=0$ and $u=1$ for the desingularised slow  flow \eqref{eq:desing-1}, subject to shock selection given by the equal area rule. Initial parameter set: $\gamma_1 = 1/4,\,\gamma_2 = 1-\gamma_1 = 3/4,\,\alpha = 1/2,\,a=0,\,c=0,\,\kappa=5,\,\beta = 6$. (a) The continuation parameter $\alpha$ is varied from $\alpha = 0.5$ to $\alpha = 0.2$ with $c$ as the free parameter. (b) The continuation parameter $\gamma_1$ is then varied from $\gamma_1 = 1/4$ to $\gamma_1 = 7/12 = 0.58\bar{3}$, with $c$ as the free parameter. Note that $c(7/12) = c_* \approx 0.19686$.}
    \label{fig:continuation}
\end{figure}

\subsubsection{Continuation of singular heteroclinic orbits}
This local patch of singular heteroclinic connections serves as a natural starting point for global continuation. For instance, we can demonstrate that the singular heteroclinic connection previously identified in the `Cahn-Hilliard'-type regularisation setting \cite{li2021,lizarraga-nonlocal}, corresponding to the parameter set 

\begin{equation*}
\gamma_1 = 7/12,\,\gamma_2 = 3/4,\,\alpha = 1/5,\,a = 0,\,c_* \approx 0.19686,\,\kappa = 5,\,\beta = 6,
\end{equation*}

is `accessible' from $\mathcal{L}$ via numerical continuation, i.e., they lie on the same path-connected component of the submanifold of heteroclinic connections. Concretely, we begin with the parameter set

\begin{equation*}
\gamma_1 = 1/4,\,\gamma_2 = 1-\gamma_1 = 3/4,\,\alpha = 1/2,\,a=0,\,c=0,\,\kappa=5,\,\beta = 6
\end{equation*}

on $\mathcal{L}$, giving rise to a symmetric standing wave (depicted in Figure \ref{fig:symmstanding}). We then perform a sequence of one-parameter continuations: first we vary $\alpha$ from $\alpha = 1/2$ to $\alpha = 1/5$ (Figure \ref{fig:continuation}(a)), and then we vary $\gamma_1$ from $\gamma_1  = 1/4$ to $\gamma_1 = 7/12$ (Figure \ref{fig:continuation}(b)), both times leaving $c$ free. As expected, the wavespeed $c=c_*$ found after these continuations agrees with the value computed in \cite{li2021,lizarraga-nonlocal}, and the corresponding singular connections are identical.

\begin{remark} \label{rem:standingwavecont}
We can also fix $c=0$ and choose some other free parameter (e.g., we can trace families of asymmetric standing waves). Note that the reduced problem \eqref{eq:desing-1} remains Hamiltonian in this case (see Remark \ref{rem:hamiltonian}); but in view of the jump condition, the disjoint slow portions of the singular heteroclinic connections that we seek will typically lie on different energy surfaces. These slow segments happen to lie on the same level set in the symmetric case (see, e.g., Figure \ref{fig:symmstanding}), but this scenario is exceptional.
\end{remark}

Altogether, the line of symmetric standing waves can be continued to a regular codimension-1 submanifold of singular heteroclinic bifurcations by, e.g., letting the first three parameters vary and leaving the wavespeed $c$ as a free parameter. \\

\subsubsection{Singular heteroclinic connections for $a > 0$}

\begin{figure}[t]
\begin{subfigure}{0.5\textwidth}
\includegraphics[width=8.75cm]{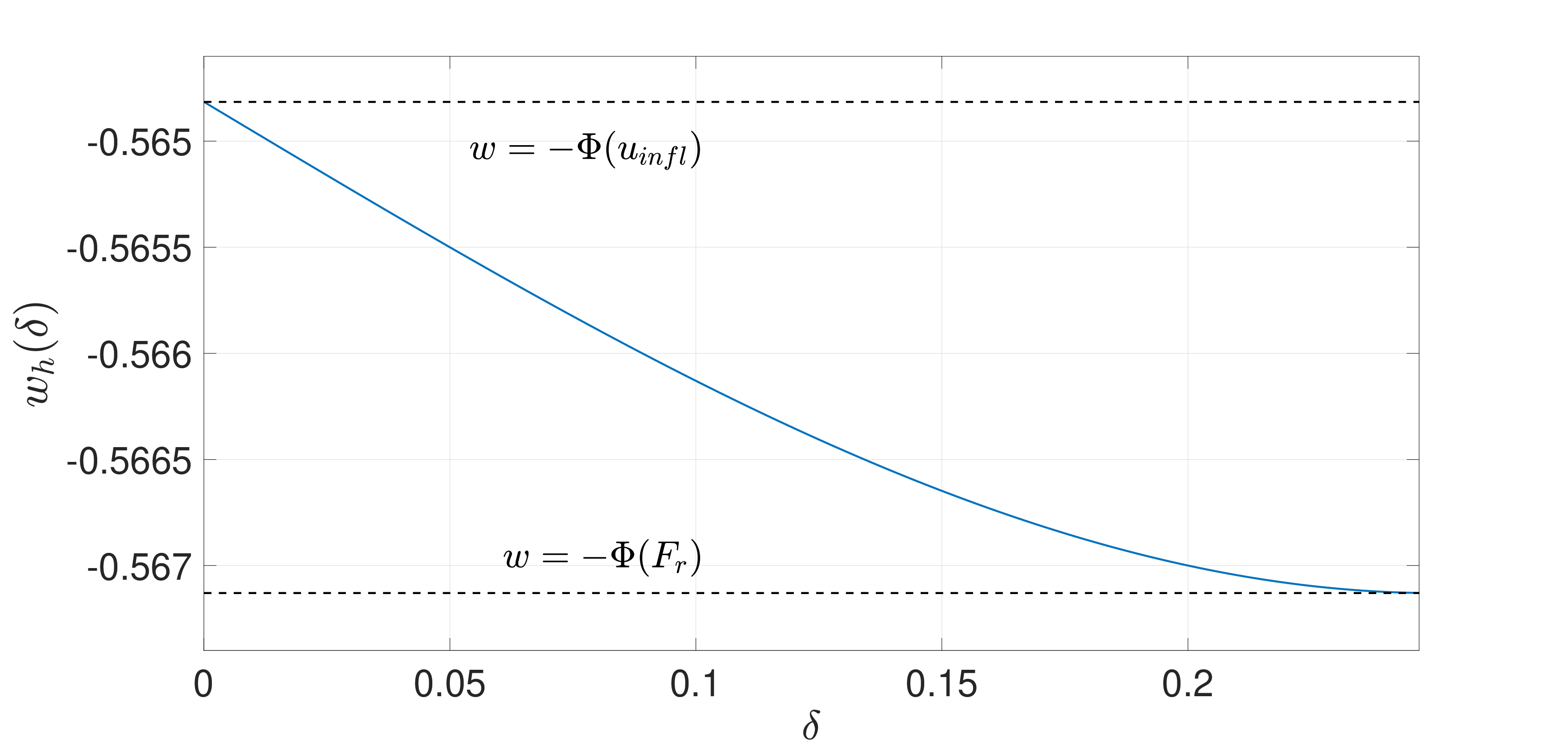} 
\caption{}
\end{subfigure}
\begin{subfigure}{0.5\textwidth}
\includegraphics[width=8.75cm]{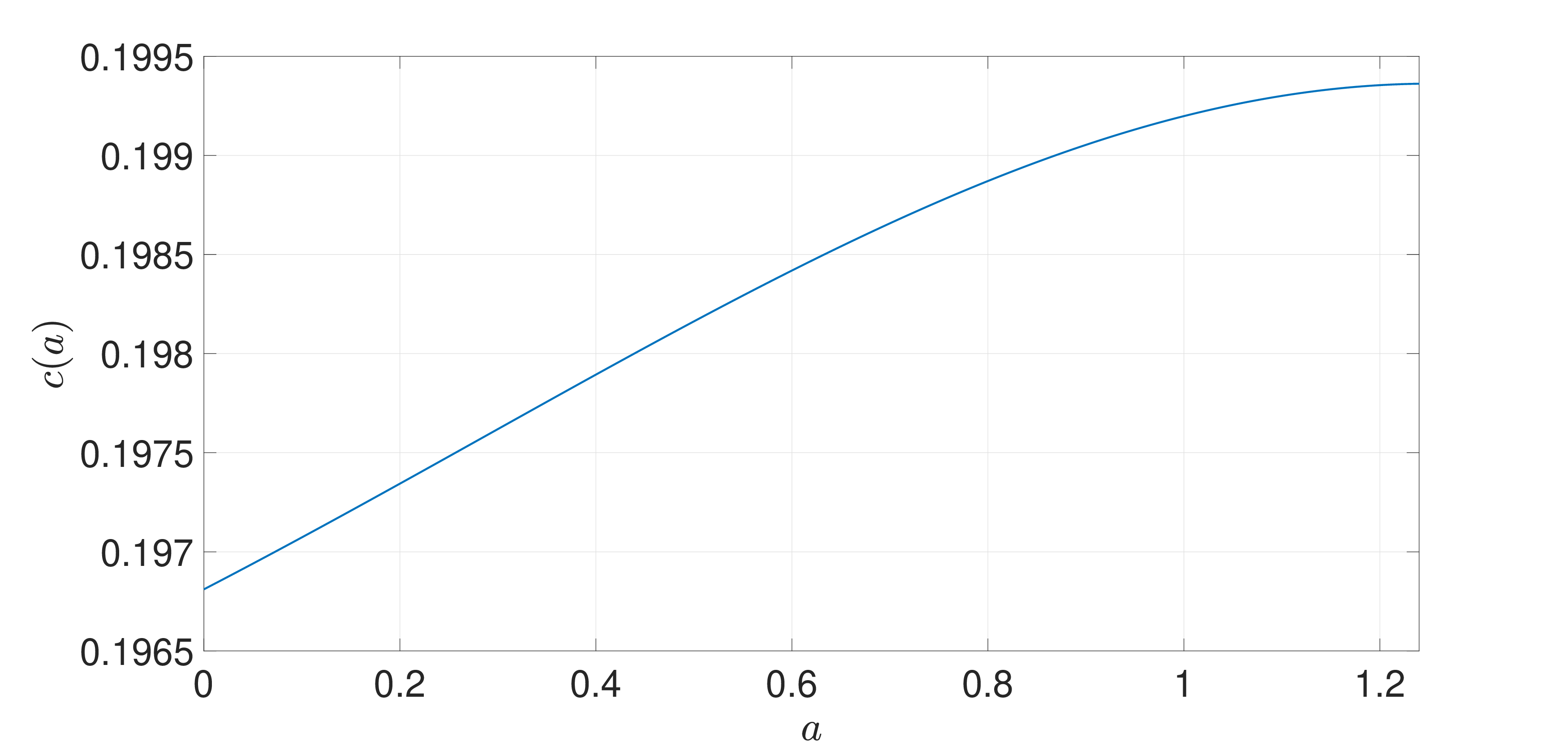}
\caption{}
\end{subfigure}
    \caption{(a) Bifurcation diagram in $(\delta,w)$-space for the shock height selection in the layer problem, corresponding to a segment of the branch $\Gamma_-$, as shown in Fig. \ref{fig:bif-complete}. Here, $\delta_m \approx 0.248$. (b) Bifurcation diagram in $(a,c)$-space for the singular heteroclinic orbits.  Parameter set: $\beta = 6$, $\gamma_1 = 7/12$, $\gamma_2 =3/4$, $\kappa = 5$, $\alpha = 1/5$.}
    \label{fig:slowbifdiag}
\end{figure}

Let us now extend the parameter space in the above analysis by the regularisation weighting parameter $a$. We demonstrate that the heteroclinic orbits in the previous section persist as transversal intersections of (un)stable manifolds of the saddle points in the extended parameter space.

Let $a \geq 0$ be specified; then as  $c$ varies, the corresponding extreme values $u = u_L(c),\,u_R(c)$ selected by the shock also vary continuously according to the height condition $w_h(\delta) = w_h(ac) = -\Phi(u)$ specified by the layer problem; see Figure \ref{fig:slowbifdiag}(a).\\

Varying $c$ simultaneously varies the (un)stable manifolds of the saddle points in the reduced problem. Suppose that the stable manifold $W^s(p_+,c)$ of the saddle point at $u_+ = 0$ has its first intersection with the cross-section $\{u = u_L(c)\}$ at a point $p_l(c)$, and similarly that the unstable manifold $W^u(p_-,c)$ of the saddle point at $p_- = 1$ has its first intersection with the cross-section $\{u = u_R(c)\}$ at $p_r(c)$. Under variation of $c$ we can then locate a locally unique value $c=c(a)$ at which the $v$-coordinates of $p_l(c)$ and $p_r(c)$ coincide; i.e., so that the shock simultaneously connects two slow trajectories that join the two saddle points.\\

Altogether, we have defined a bifurcation problem for the singular heteroclinic orbits with respect to the regularisation weighting parameter $a$; see Figure~\ref{fig:slowbifdiag}(b) for the resulting bifurcation diagram for our example parameter set. Figure~\ref{fig:singlehet} depicts an example of a singular heteroclinic connection formed under variation of the wavespeed parameter $c$ for fixed $a > 0$. We emphasize that each such invasion shock front formed within the parameter interval  $a\in [0,a_m]$, with $a_m \approx 1.2465$, satisfies a distinct generalised area rule! For the parameter set in the figure, a fixed wavespeed $c_m \approx 0.1994$ is selected for each  $a > a_m$, since this is the wavespeed at which the portions of the singular heteroclinic connection that lie on the critical manifold connect to a viscous shock at the fixed height specified by $w_h(\delta)=w_{sn}=-\Phi(u)$, which is in turn satisfied for each $\delta > \delta_m$ as depicted in Figure~\ref{fig:bif-complete}.

\begin{figure}
\centering
   \includegraphics[width=12cm]{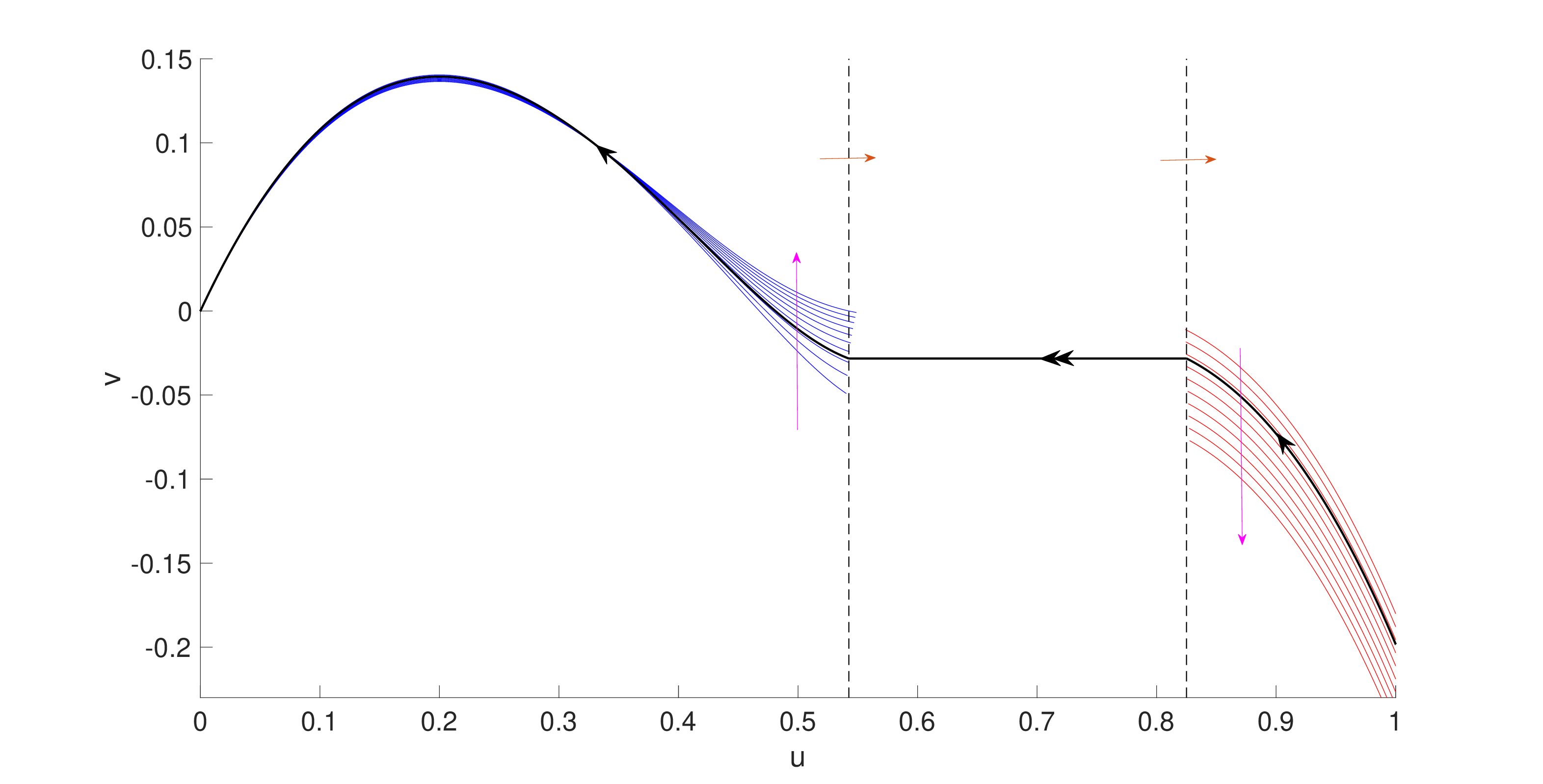}

    \caption{Singular heteroclinic orbit for $(a_*,c_*) \approx  (0.5182,0.19817)$, formed as a transversal intersection of the stable manifold $W^s(p_+,c)$ (blue curves) and unstable manifold $W^u(p_-,c)$ (red curves) as $c$ is increased within the interval $[0.18,0.25]$. Jump values $u = u_L(c_*),\,u_R(c_*)$ satisfying $-w_h(\delta_* = a_* c_*) = \Phi(u)$ denoted by dashed black lines. Vertical magenta arrows: direction of variation of (un)stable manifolds; horizontal orange arrows: variation of the endpoints of the shock, as $c$ increases. Parameter set: $\beta = 6$, $\gamma_1 = 7/12$, $\gamma_2 =3/4$, $\kappa = 5$, $\alpha = 1/5$.
    }
    \label{fig:singlehet}
\end{figure}

\begin{figure}[th!]
\centering
  (a) \includegraphics[width=12cm]{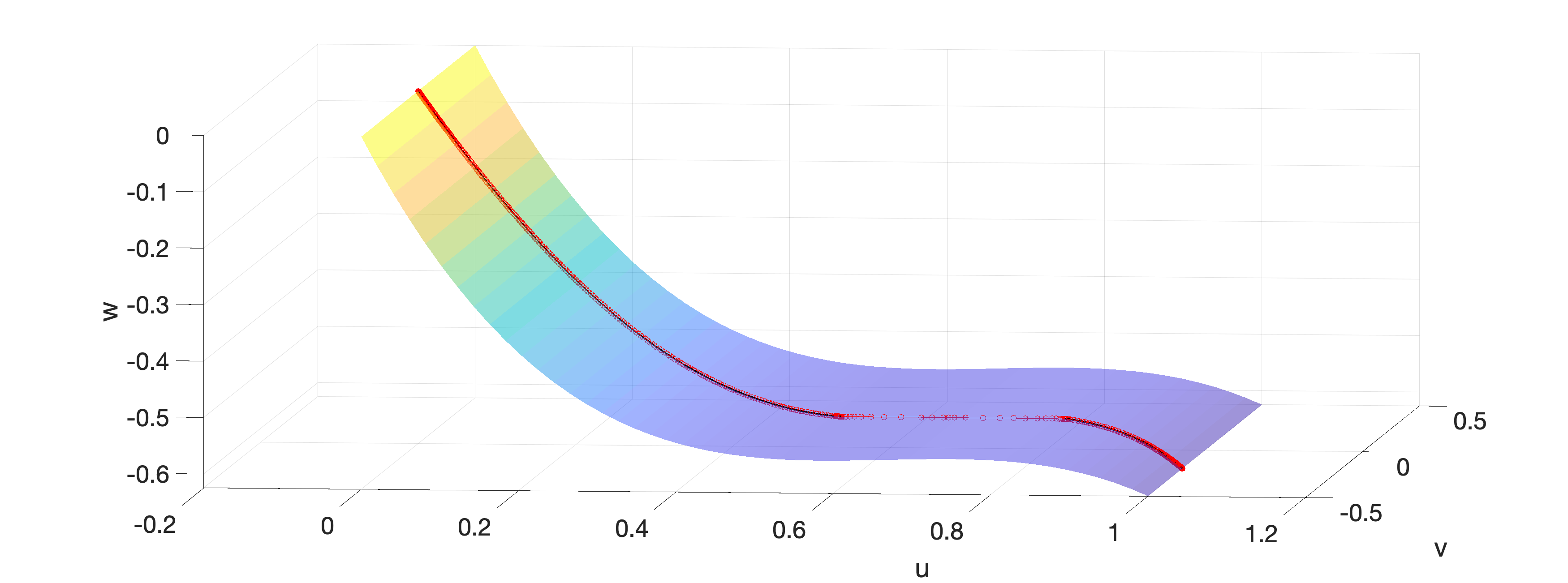}\\
  (b) \includegraphics[width=12cm]{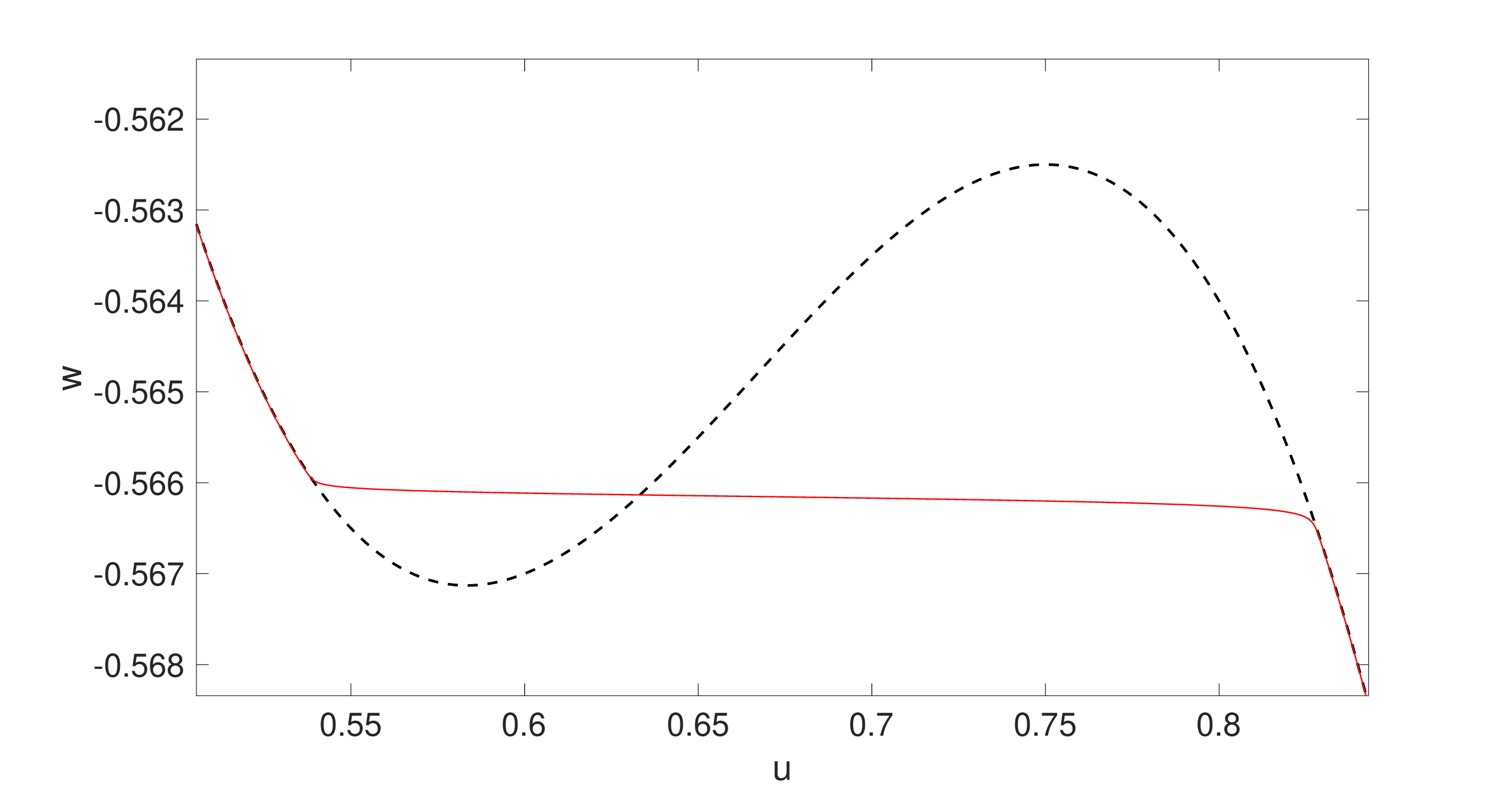}
    \caption{(a) A $(u,v,w)$-projection of a heteroclinic orbit of \eqref{eq:rnd-sys-1-fast} for $\varepsilon = 10^{-4}$, $a \approx 0.5182$, and $c = 0.19826$. Black curve segments on the critical manifold (obscured by red curve) denote the slow portions of the corresponding singular heteroclinic orbit depicted in Figure \ref{fig:singlehet}. (b) Side view of the heteroclinic orbit demonstrating that the connection satisfies the perturbed generalised area rule $w_h(\delta) = -\Phi(u)$. Parameter set: $\beta = 6$, $\gamma_1 = 7/12$, $\gamma_2 =3/4$, $\kappa = 5$, $\alpha = 1/5$. Computations performed with the \texttt{bvp4c} boundary value solver in MATLAB 2021a, with a relative error tolerance of $10^{-5}$.}
    \label{fig:fullhet}
\end{figure}

\subsection{Persistence of heteroclinics for $a \geq 0$ and $\varepsilon > 0$}

Our study of the existence of shock-fronted travelling waves for the RND PDE \eqref{eq:vcharnad} now culminates in a persistence analysis of the family of singular heteroclinic orbits we have constructed in the previous sections. In this section we sketch the relevant geometrical construction for fixed $a \geq 0$ and all sufficiently small values of $\varepsilon > 0$. The sketch for the `interpolated' shock case ($0 \leq |\delta| < \delta_m$) is slightly different from the viscous shock case ($|\delta| \geq \delta_m$), so we treat them separately. \\

\textit{The `interpolated' shock ($0 \leq |\delta| < \delta_m$) subcase.} We consider the extended five-dimensional dynamical system formed by appending the equation $c' = 0$ to the system \eqref{eq:rnd-sys-1-slow}. The (un)stable manifolds $W^s(u_-)$ and $W^u(u_+)$ in the four-dimensional system are now extended to three-dimensional center-stable and center-unstable manifolds $W^{cs}(u_-)$ and $W^{cu}(u_+)$. In the extended system, a dimension count readily verifies that intersections between these manifolds are generically transversal. Transversality in the singular limit can be verified using a Melnikov-type analysis along the singular layer connections and Fenichel theory (see e.g. Secs. 4 and 5 in \cite{sz-transverse}). The transversal connection is depicted in the extended system for an example parameter set in Fig. \ref{fig:singlehet}.\\

\textit{The viscous shock ($|\delta| \geq \delta_m$) subcase.} As before, we must verify transversality of the singular heteroclinic connection in the extended problem. In this case, Fenichel theory breaks down at the fold where the viscous-type shock lands. The technical tool to extend the relevant normally hyperbolic segments of the critical manifold across small neighbourhoods of the fold is geometric blow-up theory; see e.g. \cite{linwex13}.\\

Thus, our family of singular heteroclinic orbits defined for $\varepsilon = 0$ generically perturbs to a codimension-one manifold of heteroclinic bifurcations of \eqref{eq:rnd-sys-1-fast} in $(a,c,\varepsilon)$ parameter space. An example of such an orbit, which satisfies a generalised area rule for a nontrivial value of $a > 0$, is depicted in Figure~\ref{fig:fullhet}.

\begin{figure}[t]
\begin{subfigure}{0.5\textwidth}
\includegraphics[width=8.75cm]{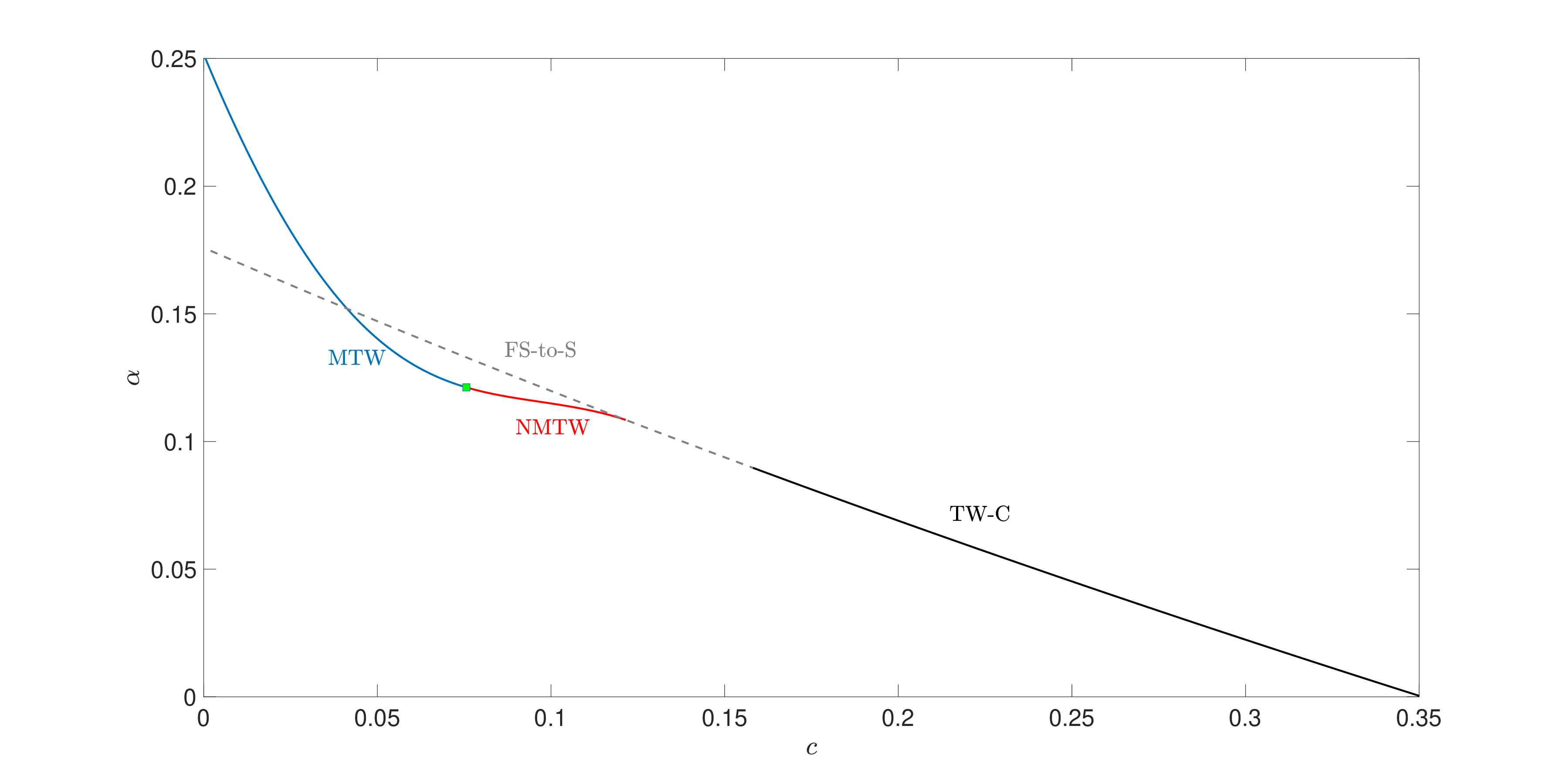} 
\caption{}
\end{subfigure}
\begin{subfigure}{0.5\textwidth}
\includegraphics[width=8.75cm]{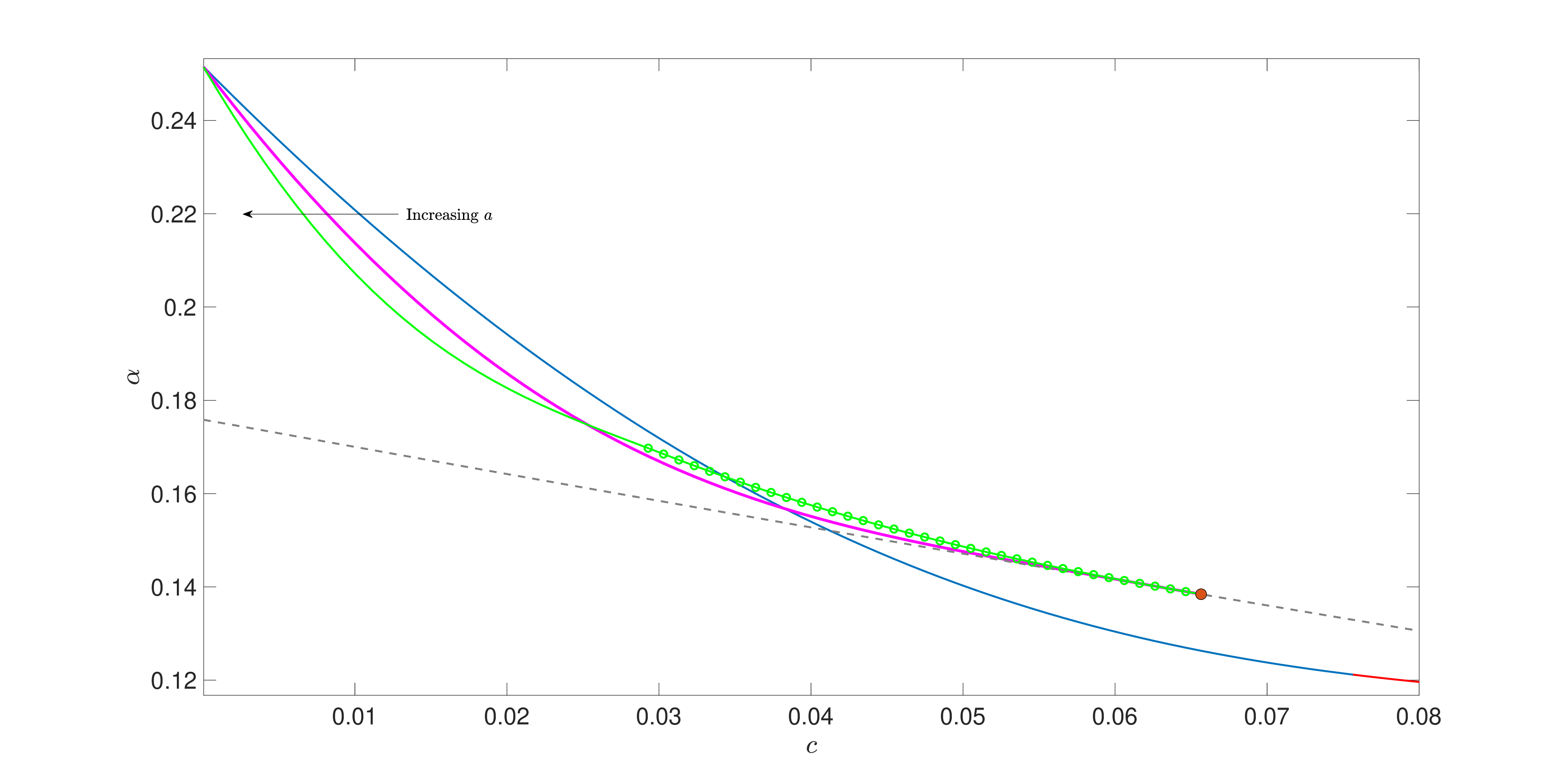}
\caption{}
\end{subfigure}
    \caption{(a) Singular bifurcation diagram depicting a monotone (blue, MTW) to nonmonotone (red, NMTW) shock-fronted travelling wave transition, via a codimension-two tangency bifurcation (green square) of $W^s(p_+)$ with the landing curve $\{u = u_l\}$. The family of nonmonotone waves terminates on the folded-saddle-to-saddle heteroclinic bifurcation (FS-to-S) curve (grey dashed curve). For each fixed $a>0$, the FS-to-S curve contains an open subset (black solid curve) where shock-fronted travelling waves with canard segments (TW-C) begin to appear.  Parameter set: $\beta = 1$, $\gamma_1 = 0.4$, $\gamma_2 = 0.75$, $\kappa = 3$, $a=0.5$. For this parameter set, we have $\delta_m \approx 0.2121$. (b) (Non)monotone wave bifurcation curves for three different values of $a$: (i) $a = 0.5$ (blue and red solid), (ii) $a = a_* \approx 3.2304$ (magenta solid), and (iii) $a = 7$ (green solid and with bubble markers). Solid red, blue, magenta, and green curves denote interpolated shock rules and the green curve with bubble markers denotes monotone waves with viscous shocks. Orange dot: codimension-three scenario for $(a_*,c_*,\alpha_*) \approx (3.2304,0.0657,0.1384)$ in which the monotone waves (magenta curve) terminate at an FS-to-S connection precisely as the shock rule changes from interpolated type to viscous type. Remaining parameters as in (a).
     }
     \label{fig:nonmbifdiag}
\end{figure}

\begin{figure}
\begin{subfigure}{0.5\textwidth}
\includegraphics[width=8.75cm]{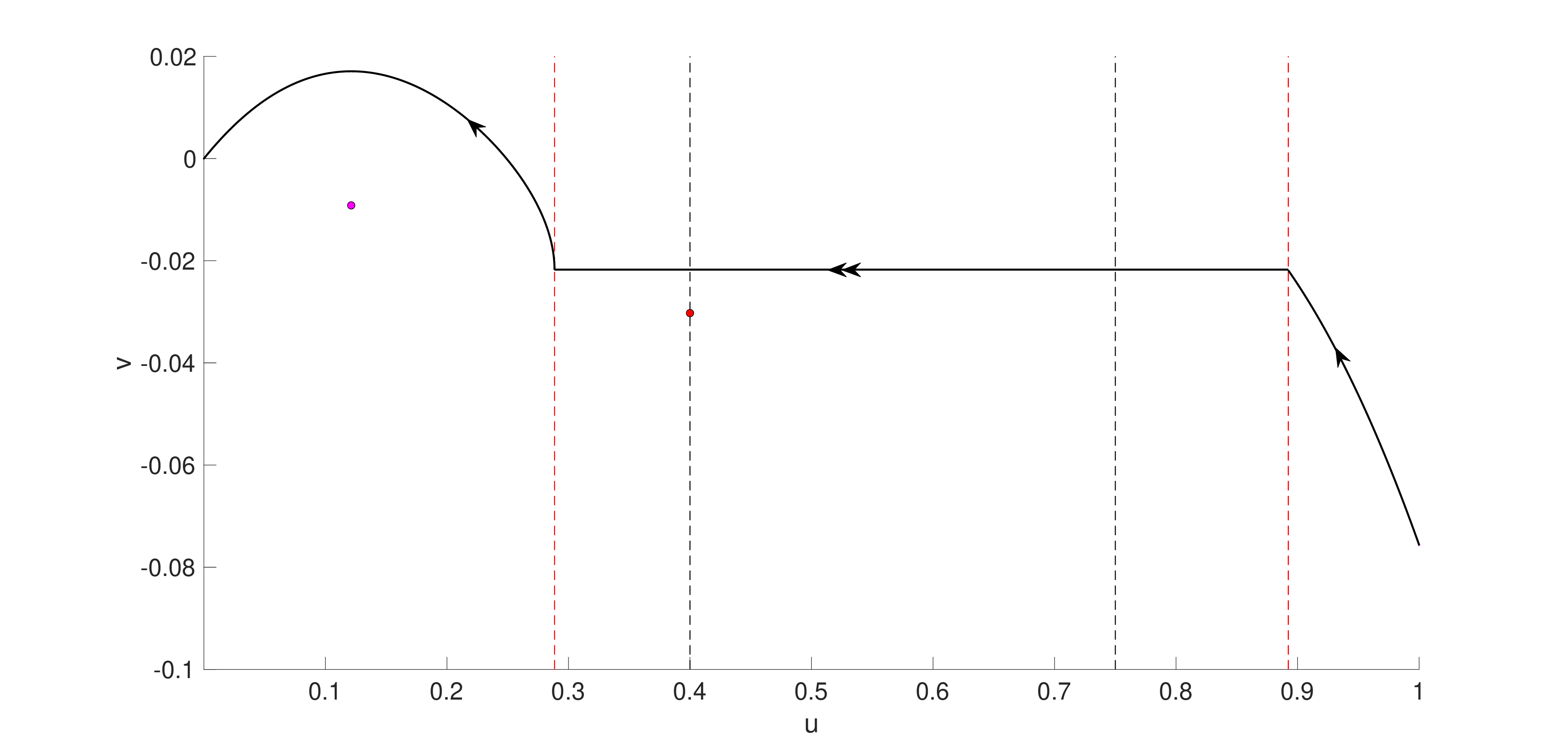} 
\caption{}
\end{subfigure}
\begin{subfigure}{0.5\textwidth}
\includegraphics[width=8.75cm]{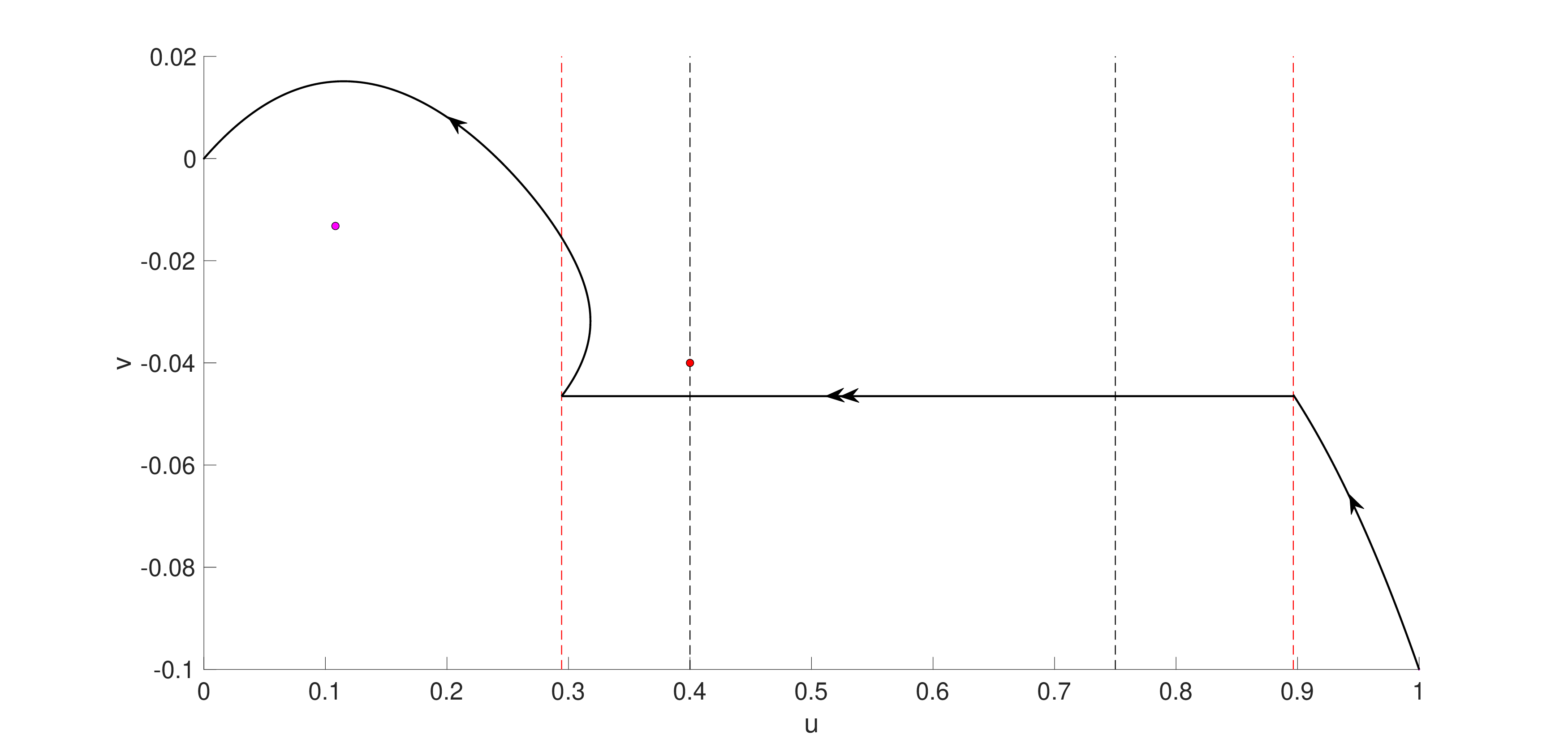}
\caption{}
\end{subfigure}
    \caption{(a) Travelling wave at the moment of tangency of $W^s(p_+)$ with the landing curve of the shock $\{u = u_{l}\}$. Parameters: $(c,\alpha) \approx (0.0756, 0.1212)$ (corresponding to the location of the green square in Figure \ref{fig:nonmbifdiag}(a)). (b) A (singular) nonmonotone travelling wave for $(c,\alpha) \approx (0.1,0.115)$.  Red point: folded saddle at $(u_{f_l},-c u_{f_l})$; magenta point: equilibrium at $(\alpha,-c \alpha)$. Remaining parameters: $\beta = 1$, $\gamma_1 = 2/5$, $\gamma_2 = 3/4$, $\kappa = 3$, $a=1/2$. 
     }
     \label{fig:nonmonotone}
\end{figure}

\subsection{(Non)monotone wave transition and termination, and shock-fronted travelling waves with tails in the aggregation regime} \label{sec:nonmwaves}

We now use numerical continuation to explore the eventual fate of the monotone\footnote{(Non)monotonicity always refers to the density variable $u$.} waves we have constructed. Let us return our focus to the singular limit $\varepsilon = 0$ and choose $\alpha$ (which specifies the middle root of the reaction term) as the continuation parameter, leaving the wavespeed parameter $c$ free. Varying $\alpha$ within the interval $0 < \alpha < \gamma_1$, the corresponding stable fixed point $(\alpha,-c\alpha)$ of the desingularised problem now plays a central role in the curving of the stable manifold $W^s(p_+)$ on $S^l_s$. At the same time, the folded singularity at $u = u_{f_l}$ is now of folded saddle (FS) type (see Table \ref{table:type-fs-bi}), allowing for the existence of canard solutions crossing from the stable middle branch (when $\delta > 0$) of the critical manifold into the saddle-type left branch.\\

The interplay between these equilibria, together with the relevant shock rule defined by $\delta = ac$, determines the termination of monotone waves, as well as the emergence of two new types of solutions: {\it nonmonotone} shock-fronted travelling waves, and shock-fronted travelling wave solutions containing nontrivial {\it slow passage through regions of aggregation/negative diffusion} (i.e., the wave trajectory now has a canard segment, visiting the slow manifold near the middle branch $S_m$ for $O(1)$ periods of `time' with respect to the travelling wave frame coordinate $z$). We now show how all of these new phenomena are organised and explained by global singular bifurcations. \\

Let us first continue the family of monotone shock-fronted travelling waves in the parameter $\alpha$, for small fixed values of $a>0$. For decreasing $\alpha$, the stable manifold $W^s(p_+)$ eventually develops a tangency to the jump curve $\{u = u_{l}\}$ on $S^l_s$ at the end of the shock; see the termination of the blue curve at the green square in Figure \ref{fig:nonmbifdiag}(a), and the corresponding tangency depicted in phase space in Figure \ref{fig:nonmonotone}(a). This global singular bifurcation heralds the termination of the monotone waves and the birth of nonmonotone shock-fronted travelling waves at increasing wavespeeds; see the example in Figure \ref{fig:nonmonotone}(b), corresponding to a point on the red bifurcation curve segment in Figure \ref{fig:nonmbifdiag}(a). This new family of nonmonotone waves can be continued in $\alpha$, where they too eventually terminate in a one-parameter family of folded saddle-to-saddle (FS-to-S) heteroclinic connections on $S^l_s$ connecting $W^s(p_+)$ to $W^u(u_{f_l})$ (the dashed grey curve in Figure \ref{fig:nonmbifdiag}(a)). \\

 We can investigate how this sequence of transitions morphs for different choices of $a$. In Figure \ref{fig:nonmbifdiag}(b), we have plotted the singular heteroclinic bifurcation curves for three different values of $a$.  As we fix larger values of $a>0$, the endpoints of the shock begin to move to the right relatively quickly as the wavespeed $c$ increases, as a result of the dependence of the shock selection rule on the parameter $\delta = ac$. Indeed, for sufficiently large $a>0$, the shock rule changes from `interpolated' type to `viscous' type along the monotone wave branch (i.e. we enter the parameter regime $\delta > \delta_m$ along the corresponding bifurcation curve) before a tangency of $W^s(p_+)$ with the jump curve $\{u = u_{l}\}$ has a chance to form. Therefore, no nonmonotone waves emerge; instead, the monotone family terminates directly on the FS-to-S branch; see the green curve in Figure \ref{fig:nonmbifdiag}(b) corresponding to a singular heteroclinic bifurcation curve for a large fixed value of $a$, and in particular the transition from interpolated shock rules (green solid) to viscous shock rules (green with bubble markers).\\

The transition between these two scenarios implies the existence of an intermediate critical value $a = a_*$, for which the family of monotone waves terminates on the FS-to-S branch at a point $(c_*,\alpha_*)$ precisely when the shock selection rule changes from `interpolated' to `viscous' type (i.e., we have $\delta_m = a_* c_*$). We have numerically identified this intermediate scenario in an example parameter set;  see the magenta curve terminating at the orange dot on the FS-to-S curve in Figure \ref{fig:nonmbifdiag}(b). This termination point can be interpreted as a codimension-three global singular bifurcation at $(a,c,\alpha) = (a_*,c_*,\alpha_*)$, whose unfolding includes the emergence of nonmonotone traveling waves.

\subsubsection{Shock-fronted travelling waves with canard segments}

 \begin{figure}[t]
\centering
    \includegraphics[width=14cm]{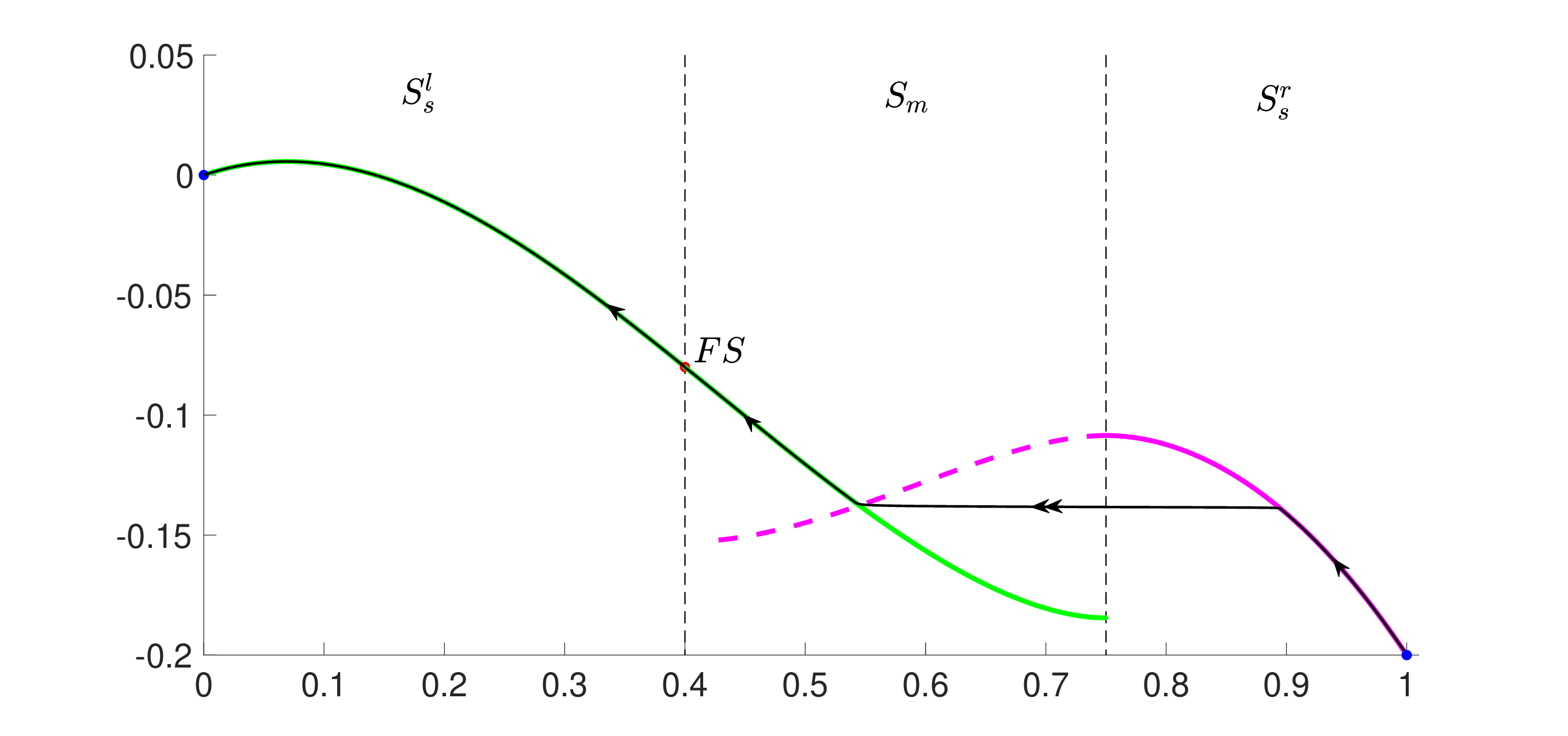}
     \caption{Singular heteroclinic orbit with a singular (vrai) canard segment through a folded saddle (FS) point. Green curve: stable manifold $W^s(p_+)$; magenta solid curve: unstable manifold $W^u(p_-)$; magenta dashed curve: segment of the projection of $W^u(p_-)$ onto $S_m$; black dashed lines: fold curves. Parameter set: $\beta = 1$, $\gamma_1 = 2/5$, $\gamma_2 = 3/4$, $\kappa = 3$, $\alpha \approx 0.068984$, $c = 0.2$.
     }
     \label{fig:interpolcanardshock}
 \end{figure}

We now turn to the existence of shock-fronted travelling wave solutions containing canard segments passing through the FS point. To find such solutions, it is natural to search along the codimension-one family of FS-to-S heteroclinic connections (the grey dashed curve in Figure \ref{fig:nonmbifdiag}(a)). Recalling that $S_m$ is attracting when $\delta > 0$, we search for parameter values along this branch for which the singular stable fast fibre bundle over the unstable manifold of the FS point (with respect to the orientation-reversed desingularised problem) on $S_m$ transversely intersects the unstable fast fibre bundle sitting over $W^u(p_-)$ (in $\mathbb{R}^4$).  A transverse intersection of this type is depicted in Figure \ref{fig:interpolcanardshock} by projecting the corresponding segment of $W^u(p_-)$ onto $S_m$ via the layer flow.\\

These transverse crossings along the FS-to-S curve give rise to shock-fronted travelling waves containing singular (slow) canard segments that connect $S_m$ to $S^l_s$, as shown in Figure \ref{fig:interpolcanardshock}. Such transverse intersections persist robustly under parameter variation, i.e., the codimension-one FS-to-S manifold will intersect the parameter set where such transverse crossings exist in an open neighbourhood. In other words, we still retain a well-defined codimension-one singular heteroclinic bifurcation problem. See the black solid curve segment (labeled TW-C) representing such a subset in Figure \ref{fig:nonmbifdiag}(a).\\

In this scenario, we are interested in the `shock selection rule' (i.e. the set of allowable layer connections) that specifies how $S^r_s$ connects to $S_m$.  The regularisation weighting parameter $a$ continues to play an important role: the composite parameter $\delta = ac$ must be {\it sufficiently large} so that a long enough segment of $W^u(p_-)$ can be projected onto $S_m$ via the layer flow to transversely intersect the unstable manifold of the FS point. This constraint arises due to the relative orientation with which the (un)stable manifolds break apart when crossing the heteroclinic bifurcation curve $\Gamma_-$ transversely in Figure \ref{fig:bif-complete}. In particular, suitable connections from $S^r_s$ to $S_m$ are found in the open region of parameter space to the \textit{top-right} of $\Gamma_-$ only. \\

 \begin{remark}
   Canards may also arise via slow passage through folded node (FN) points, which also appear in our model (see Table \ref{table:type-fs-bi}). However, the stability classification of the critical manifold and the `stability' of the FN singularities (relative to the desingularised problem) do not appear to be compatible with the existence of an invasion shock front (i.e., with $c>0$), which both connects $W^u(p_-)$ to $W^s(p_+)$ and simultaneously contains a canard segment passing through the FN point.
 \end{remark}

\section{Spectral stability of monotone shock-fronted travelling waves} \label{sec:stability}

We now assess the stability of the monotone travelling waves constructed in the previous section, leaving a stability analysis of the nonmonotone travelling waves as well as the travelling waves with canards for future work. Stability results for monotone waves of our RND PDE were initiated in \cite{lizarraga-nonlocal,lizarraga-viscous} for the viscous and `Cahn-Hilliard'-type regularisation limits; here we focus on extending these results to the case of composite regularisation. We adopt the standard approach of determining the spectral stability of the linearised operator $L$ associated with the PDE \eqref{eq:rnd} near the travelling wave. Our first step is to write down a suitable coordinate representation of $L$.\\

Let $\tilde{u}(z,t) = u(z) + \nu e^{\lambda t}p(z) + \mathcal{O}(\nu^2)$ denote a perturbation of a travelling wave $u(z)$ of \eqref{eq:rnd}, where $\lambda$ is the temporal eigenvalue parameter and the variables in the linear term are assumed to separate. Inserting this solution into \eqref{eq:rnd} and collecting terms of linear order in $\nu$, we obtain the equation

\begin{align}\label{eq:linearisedpde}
(f'(u) - \lambda)p &= -(cp_z + (D(u)p)_{zz} + \varepsilon a (\lambda p_{zz} - cp_{zzz})-\varepsilon^2 p_{zzzz}).
\end{align}

Defining linearised variables $y := (p,q,r,s)$ corresponding to the nested derivatives on the right (analogously to what is done for the travelling wave system \eqref{eq:rnd-sys-1-slow}), we arrive at the following nonautonomous linear system:
\begin{equation} \label{eq:eigsystemslow}
\begin{aligned}
\varepsilon \dot{p} &= q\\
\varepsilon \dot{q} &= (D(u)+\varepsilon a \lambda)p + s - \delta q\\
\dot{r} &= (f'(u)-\lambda)p\\
\dot{s} &= r + cp,
\end{aligned}
\end{equation}
or more compactly, 

\begin{align} \label{eq:eigsystemslowcompact}
\dot{y} &= M(z,\lambda,\varepsilon) y,
\end{align}

where $M(z,\lambda,\varepsilon)$ is a convenient matrix representation of $L$. We highlight the terms $\varepsilon a \lambda p$ and $-\delta q$ arising due to the viscous relaxation contribution.\\

By general theory (see e.g. \cite{kapitula1998stability}), the spectrum $\sigma(L)$ of $L$ (i.e. the set of $\lambda$ in the complex plane, where $L-\lambda$ is not invertible on $L^2(\mathbb{R})$) can be decomposed into its {\it point} and {\it essential} spectrum $\sigma(L) = \sigma_p(L) \cup \sigma_c(L)$. Our task is to ensure that the spectrum  is bounded within the left half complex plane, except for an eigenvalue at the origin that necessarily exists due to translational invariance. We must also check that this translational eigenvalue is simple. \\

\subsection{Essential spectrum and sectoriality}

We first give a brief overview of recent results for the essential spectrum.  In \cite{lizarraga-viscous}, it was shown that for each $a\geq 0$, the essential spectrum is bounded well inside the left-half plane. To briefly summarise the approach, the {\it Fredholm borders} of $\sigma_e(L)$ are computed by tracking changes in the Fredholm index of  the asymptotically constant matrices $M_{\pm}(\lambda,\varepsilon) := \lim_{z\to\pm \infty}M(z,\lambda,\varepsilon)$, which characterise the hyperbolic dynamics near the tails of the wave. We obtain the following parametrisations for the dispersion relations (with $k \in \mathbb{R}$):

 \begin{align}\label{eq:disp}
     \lambda_\pm(k) = \frac{f'(u_\pm) - D(u_\pm) k^2 - \varepsilon^2 k^4}{1 + a \varepsilon k^2} + i c k. 
 \end{align}


We first verify that the essential spectrum lies entirely within the left-half plane. This can be seen by noting that the denominator of the real part of $\lambda_\pm(k)$ in \eqref{eq:disp} is strictly positive, and then applying Descartes' rule of signs to the numerator, noting that $f'(u_\pm) <0$, and $D(u_\pm)$ and $\varepsilon$ are both positive. Since the numerator is an even polynomial, there are no real roots of the real part of $\lambda_\pm(k)$. Therefore, the essential spectrum is entirely contained in the left-half plane.\\

 Let us  recall the definition of sectoriality of a linear operator from Definition 1.3.1 in \cite{henry80}. A linear operator $A$ in a Banach space $(X,||\cdot|| )$ is {\it sectorial} if it is a closed, densely defined operator such that for some $\phi \in (0,\pi/2)$, some real $a$, and some $M\geq 1$, 
 \begin{enumerate}
     \item[(i)] the sector $S_{a,\phi}:= \{ \lambda \in \mathbb{C}: \phi \leq |\text{arg}(\lambda-a)| \leq \pi,\,\lambda \neq 0\}$ lies inside the resolvent set of $A$, and
     \item[(ii)] $||\lambda - A||^{-1}\leq M/|\lambda-a|$.
 \end{enumerate}
 
Sectoriality allows us to conclude \textit{nonlinear} stability from spectral stability under mild regularity conditions on the operator along travelling wave solutions, i.e. we can deduce the existence of a neighbourhood of initial conditions of the shock-fronted travelling wave tending to a translate of the wave exponentially quickly as time increases; see Ex. 6 in Sec. 5.1.1 of \cite{henry80}. Simultaneously, this property allows us to deduce the existence of a maximal compact contour $K$ in the complex plane containing all of the point spectrum inside it.\\

We showed that the essential spectrum is asymptotically vertical (obstructing sectoriality) in the viscous relaxation limit \eqref{eq:varnad} (see \cite{lizarraga-viscous}), whereas the linearised operator \textit{is} sectorial in the `Cahn-Hilliard' regularisation limit \eqref{eq:charnad}, corresponding to setting $a = 0$ (see \cite{lizarraga-nonlocal}).  In this section, we verify the sector estimate (i) in the list above for $a\geq 0$ by using a geometric compactification technique, i.e. a rescaling `at infinity,' as we now describe. 

\begin{remark} \label{rem:sectorissue}
The resolvent estimate (ii) in the list above was verified for the $a = 0$ case with the aid of spectral perturbation theorems; to summarise, sectoriality of a fourth-order spatial derivative operator was retained under perturbation by lower-order terms (see \cite{lizarraga-nonlocal}). In the present case, this argument is more difficult to adapt due to the presence of the mixed-derivative viscous term $u_{xxt}$ for $a > 0$, and so we defer a complete functional-analytic treatment of this issue for future work.  
\end{remark}

We follow the general approach of the proof of Prop 2.2 in Sec. 5-B of \cite{agj90}: we identify a suitable rescaling of the linearised variables in \eqref{eq:eigsystemslow} in powers of $|\lambda|$, such that the $|\lambda| \to \infty$ limiting system takes an especially simple form. We use this simple form to determine that  there is no unstable-to-stable connection made (i.e. there can be no spectrum for sufficiently large values of $|\lambda|$), as long as $\lambda$ lies inside a suitable sector specified by the constraint on $\text{arg}(\lambda)$.  \\


It turns out that the appropriate choice of rescaling weights depends on whether $a = 0$ or $a > 0$. This dichotomy is not unexpected in view of the distinct asymptotic behaviour of the dispersion relations \eqref{eq:disp} in each case: the relations flair to the left with quartic growth when $a=0$,  but with only quadratic growth when $a>0$. \\

We first consider the case $a=0$, repeating the analysis in \cite{lizarraga-nonlocal} but with a focus on rescaling the representation \eqref{eq:eigsystemslow}. Define the rescaled quantities

\begin{equation*}
\tilde{p} = p,\hspace{0.3cm} \tilde{q} =q/|\lambda|,\hspace{0.3cm}\tilde{r} =r/|\lambda|^{3},\hspace{0.3cm} \tilde{s} =s/|\lambda|^{2},\hspace{0.3cm}\tilde{z} = z|\lambda|.
\end{equation*}

Writing the eigenvalue system \eqref{eq:eigsystemslow} in terms of the rescaled variables $(\tilde{p},\tilde{q},\tilde{r},\tilde{s})$ with rescaled `time' $\tilde{z}$, and then taking the limit $|\lambda| \to \infty$, we arrive at the hyperbolic, constant coefficient linear system

\begin{equation} \label{eq:rescaledeig}
    \begin{aligned}
        \varepsilon \dot{\tilde{p}} &= \tilde{q}\\
        \varepsilon \dot{\tilde{q}} &=  \tilde{s}\\
         \dot{\tilde{r}} &= -e^{i \text{arg}\,\lambda} \tilde{p}\\
        \dot{\tilde{s}} &= \tilde{r}.
    \end{aligned}
\end{equation}

\begin{figure}
    \centering
    \includegraphics[width=9cm]{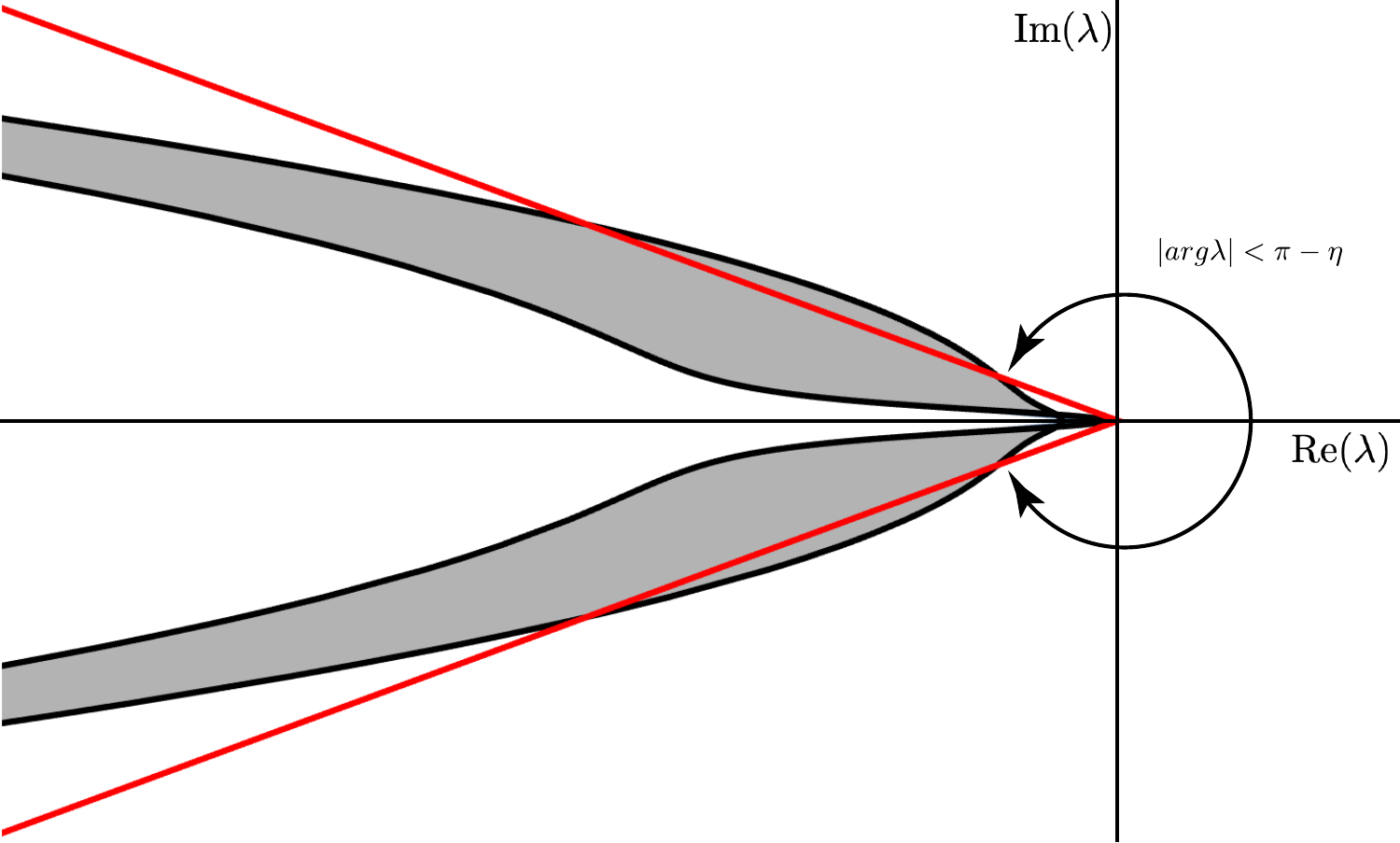}
    
\caption{A schematic of the dispersion relations (black) from \eqref{eq:disp} enclosing the essential spectrum (grey), and an asymptotically bounding sector (red online). A bounding angle $\eta$ is chosen such that the viscous relaxation contribution causes the dispersion relations to `flair out' above the sector. The contributions from the Fourier transform of the nonlocal terms eventually take over for sufficiently large values of $|\lambda|$, and the dispersion relations return to being on the left of the bounding sector. } 
    \label{fig:sector}
\end{figure}

The (spatial) eigenvalues of \eqref{eq:rescaledeig} are obtained from the roots of the quartic characteristic polynomial 

\begin{equation*}
e^{i\, \text{arg}\,\lambda} + \varepsilon^2 \mu^4 = 0,
\end{equation*}

which can be solved explicitly to obtain the expressions

$$
\mu = \frac{e^{i (\text{arg}\,\lambda+m\pi)/4}}{\sqrt{\varepsilon}},\hspace{0.3cm} m=0,\,1,\,2,\,3.
$$

Note that this matches the `large-scale' eigenvalues in  \cite{lizarraga-nonlocal} (c.f. eq. (20)), which were calculated using a different choice of rescaling weights. We also highlight that the asymptotic eigenvalue problem is independent of the wavespeed parameter.\\

Choose any $\eta \in (0,\pi/2)$. Then for each fixed $\varepsilon > 0$, two of these eigenvalues have strictly positive real part and the remaining two have strictly negative real part whenever $|\text{arg}(\lambda)| < \pi - \eta$. Since \eqref{eq:rescaledeig} is autonomous, the corresponding unstable subbundle forms an attractor near to which solutions of the eigenvalue problem remain close by for all $y$, i.e. for sufficiently large $|\lambda|$ within the sector specified by the choice of $\eta$, there can be no contribution to the spectrum of the linearised operator. Furthermore, the eigenvalues remain well separated as $\varepsilon \to 0$.\\

Now we consider the case $a>0$. In order to control the additional $\lambda$-dependent term in \eqref{eq:eigsystemslow} when $|\lambda|$ grows large, we must also include $|\lambda|$ in the rescaling. We introduce an auxiliary rescaling parameter $\sigma>0    $ and write

\begin{align}\label{eq:scalings}
    \tilde{p} = p,\hspace{0.3cm} \tilde{q} =q\sigma,\hspace{0.3cm}\tilde{r} =r\sigma^3,\hspace{0.3cm} \tilde{s} =s\sigma^{2},\hspace{0.3cm}\tilde{z} = z/\sigma,\hspace{0.3cm} |\lambda| = 1/\sigma^{2}.
\end{align}

With respect to this scaling, we have

\begin{equation} \label{eq:rescaledeig2}
    \begin{aligned}
        \varepsilon \dot{\tilde{p}} &= \tilde{q}\\
        \varepsilon \dot{\tilde{q}} &= \left(\sigma^2  D(u) + \varepsilon a e^{i\,\text{arg}(\lambda)}\right)\tilde{p} + \tilde{s} -\sigma \delta  \tilde{q}\\
         \dot{\tilde{r}} &= (\sigma^4 f'(u) - \sigma^2 e^{i\,\text{arg}(\lambda)})\tilde{p}\\
        \dot{\tilde{s}} &= \tilde{r} + \sigma^3 c \tilde{p}.
    \end{aligned}
\end{equation}

We are concerned with the dynamics near the limit $\sigma\to 0$. As in the purely nonlocal case, the limiting linear system has constant coefficients, but it is now {\it nonhyperbolic}:

\begin{equation} \label{eq:rescaledeig2lim}
    \begin{aligned}
        \varepsilon \dot{\tilde{p}} &= \tilde{q}\\
        \varepsilon \dot{\tilde{q}} &= \varepsilon a e^{i\, \text{arg}\,\lambda}\tilde{p} +  \tilde{s}\\
         \dot{\tilde{r}} &= 0\\
        \dot{\tilde{s}} &= \tilde{r},
    \end{aligned}
\end{equation}

with eigenvalues

\begin{align} \label{eq:mu0th}
    \mu = 0,\,0,\,\pm \sqrt{\frac{a}{\varepsilon}} e^{i\, \text{arg}(\lambda)/2}.
\end{align}

The analysis here is more delicate than in the previous case: we do not have access to an attractor (an unstable 2-plane bundle) in the large $|\lambda|$ limit, and the weak (un)stable directions degenerate to a two-dimensional center subspace. We resort to standard perturbation theory. The eigenvalues of \eqref{eq:rescaledeig2} can be determined to arbitrary order in $\sigma$ (i.e. $1/|\lambda|^{1/2}$); we find that the pair of zero eigenvalues perturbs as

\begin{align}\label{eq:mu1st}
    \mu = \pm \sqrt{\frac{1}{a\varepsilon}}\, \frac{1}{|\lambda|^{1/2}} + \mathcal{O}\left( \frac{1}{|\lambda|}\right).
\end{align}

We remind the reader that the system \eqref{eq:rescaledeig2} is nonautonomous, but the relevant contributions from $D(u)$ and $f'(u)$ are bounded and do not affect the signs of the two smaller eigenvalues at leading order. An invariant attractor over the unstable subbundle can be constructed explicitly by projectivizing the system and then using the theory of relatively invariant sets for nonautonomous systems (see Sec. B in \cite{gardner-jones}), but we avoid these technical details here. The argument for the sector estimate (i) in terms of bounding angles $\eta$ then follows as in the `pure Cahn-Hilliard regularisation' case. See Fig. \ref{fig:sector} for a depiction of a bounding sector in relation to the essential spectrum.

\begin{remark}
The scaling weights $n_p,\,n_q,\,\dots$ in $\tilde{p} = p/|\lambda|^{n_p},\,\tilde{q} = q/|\lambda|^{n_q},$ etc. can be chosen such that the limiting system \eqref{eq:rescaledeig} is autonomous, and so that the exponent of $|\lambda|$ balances to zero in the slow equation. This rescaling procedure can be interpreted geometrically as an extended Poincar\'{e} compactification of the vector field \eqref{eq:eigsystemslow} `at infinity'; see \cite{Wechselberger_2002}.\\

Our choice of weights is also consistent with the scaling derived using the method of dominant balance in the WKB approximation of \eqref{eq:linearisedpde} (see \cite{benderorszag}). Furthermore, the corresponding eigenvalue expansions in \eqref{eq:mu0th}--\eqref{eq:mu1st} match the output of the WKB calculations.
\end{remark}

We have shown that the appropriate cone estimate required for sectoriality holds for each $a\geq 0$, extending the result in \cite{lizarraga-nonlocal}. Assuming that the resolvent estimate can also be verified for each $a \geq 0$ (see Remark \ref{rem:sectorissue}), this result has the following interesting implication: it is possible to retain sectoriality (and hence nonlinear stability) for travelling waves containing viscous shocks, when viscous relaxation is counterbalanced by `Cahn-Hilliard type' regularisation. This is in contrast to the `pure viscous relaxation' limit in \cite{lizarraga-viscous}, where the asymptotically vertical nature of the essential spectrum obstructs sectoriality of the operator.

\subsection{Computation of the point spectrum}

In both the viscous relaxation limit \eqref{eq:varnad} and the `pure Cahn-Hilliard' regularisation limit \eqref{eq:charnad}, there exist only two eigenvalues in the point spectrum for sufficiently small $\varepsilon>0$: the simple translational eigenvalue $\lambda_0 = 0$, and another simple real eigenvalue $\lambda_1 \in (\text{max}(R'(0),R'(1)),0)$ that does not destabilise the travelling wave (see \cite{lizarraga-nonlocal,lizarraga-viscous}). In this section, we augment these results by sampling the point spectrum of the corresponding linearised operator for $\delta > 0$ and small values of $\varepsilon > 0$. 

\begin{remark} \label{rem:spectralparams}
Throughout this section we fix the parameter set $\beta = 6$, $\gamma_1 = 7/12$, $\gamma_2 =3/4$, $\kappa = 5$, $\alpha = 1/5$ in order to make concrete computations; this parameter set is also used as a running example in \cite{li2021,lizarraga-nonlocal,lizarraga-viscous}. However, we emphasize that there is nothing particularly special about this parameter set in the spectral stability calculation, and the following approach that we develop can be applied in general.
\end{remark}
Let us outline the strategy to compute the point spectrum. For each $\lambda \in \mathbb{C}$ to the right of the essential spectrum, we can define an unstable complex $2$-plane bundle $\varphi_-(z,\lambda,\varepsilon)$ extending from the unstable subspace of the saddle point at $u=1$, resp. a stable complex $2$-plane bundle $\varphi_+(z,\lambda,\varepsilon)$ extending from the stable subspace of the saddle point at $u=0$, by using the eigenvalue problem \eqref{eq:eigsystemslow}.  A (spatial) eigenvalue $\lambda \in \sigma_p(L)$ is found whenever $\varphi_-$ and $\varphi_+$ have a nontrivial intersection at some value $z$; see \cite{agj90} for details.\\

In view of this geometric characterization for the spatial eigenvalues, we will use a {\it Riccati-Evans function} to compute these intersections. The eigenvalue problem \eqref{eq:eigsystemslow} induces a nonlinear flow on the Grassmannian $Gr(2,4)$ of complex $2$-planes in $\mathbb{C}^4$. On a suitable coordinate patch of $Gr(2,4)$, the nonlinear flow is defined using a {\it matrix Riccati equation} of the form
\begin{equation}
\label{eq:matrixriccati}
\begin{aligned}
W' &= C + DW -WA - WBW,
\end{aligned}
\end{equation} 

where $W$ is a complex-valued $2\times 2$ matrix variable defined using frame coordinates for the $2$-planes, and the $2\times 2$ matrices $A,B,C,D$ are defined via a block decomposition of the linear operator $M$ in \eqref{eq:eigsystemslowcompact}:
\begin{equation}
\label{eq:blockdecomp}
\begin{aligned}
M
 &=\left(
    \begin{array}{c|c}
      A & B\\
      \hline
      C & D
    \end{array}
    \right).
\end{aligned}
\end{equation} 

See \cite{harley2019spectral} and \cite{vlsmvt10} for the derivation of \eqref{eq:matrixriccati}, and \cite{lizarraga-nonlocal} for the specific construction in the case of `pure Cahn-Hilliard-type' regularisation \eqref{eq:charnad}. \\

In terms of the representation \eqref{eq:riccatievans} of the projectivised dynamics, $\varphi_-$ is equivalent to the unique trajectory of \eqref{eq:matrixriccati} that converges to the unstable subspace of the saddle point at $u=1$ as $z \to -\infty$; there is also an analogous characterisation of $\varphi_+$.   We can now formulate a shooting problem defined on a suitable cross section that intersects the travelling wave transversely, say $\Sigma = \{u = 0.7\}$, with the corresponding intersection point $z_0 \in \Sigma$. The unstable bundle $\varphi_-(z,\lambda,\varepsilon)$ is flowed forward from $u = 1$ and the stable bundle $\varphi_+(z,\lambda,\varepsilon)$ is flowed backward from $u=0$. Suppressing the notation for $\varepsilon$, the Riccati-Evans function $E(z_0,\lambda)$ is defined by

\begin{align} \label{eq:riccatievans}
E(z_0,\lambda)  = \det(\varphi_+(z_0,\lambda) - \varphi_-(z_0,\lambda)).
\end{align}

We can then find eigenvalues $\lambda \in \sigma_p(L)$ by locating zeroes of $E(z_0,\lambda)$; see \cite{harley2019spectral}. Using the argument principle, we locate these zeroes by computing the winding number of $E$ along suitably chosen contours in the complex plane and to the right of the essential spectrum.\\

\begin{figure}
\begin{subfigure}{0.5\textwidth}
\includegraphics[width=8.75cm]{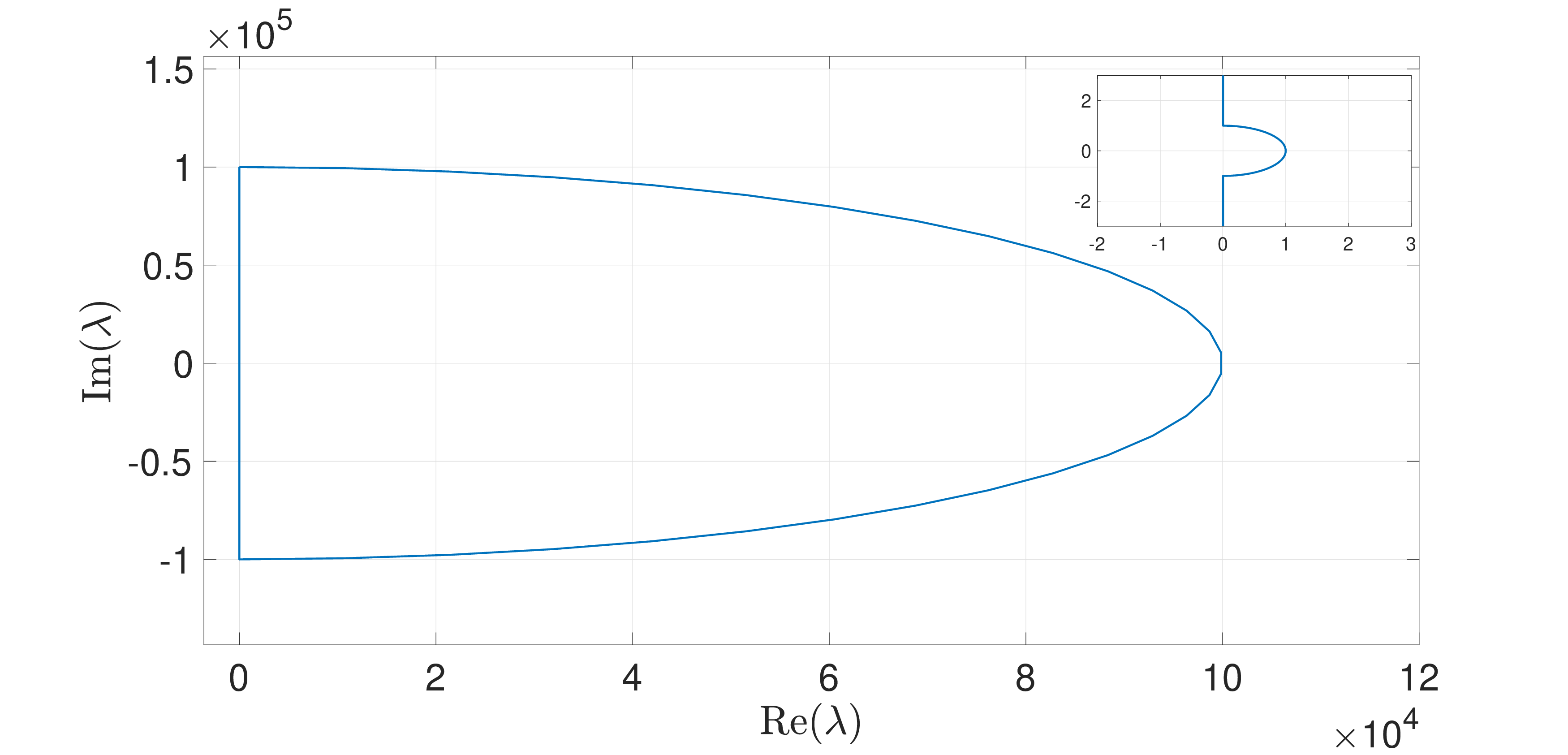} 
\caption{}
\end{subfigure}
\begin{subfigure}{0.5\textwidth}
\includegraphics[width=8.75cm]{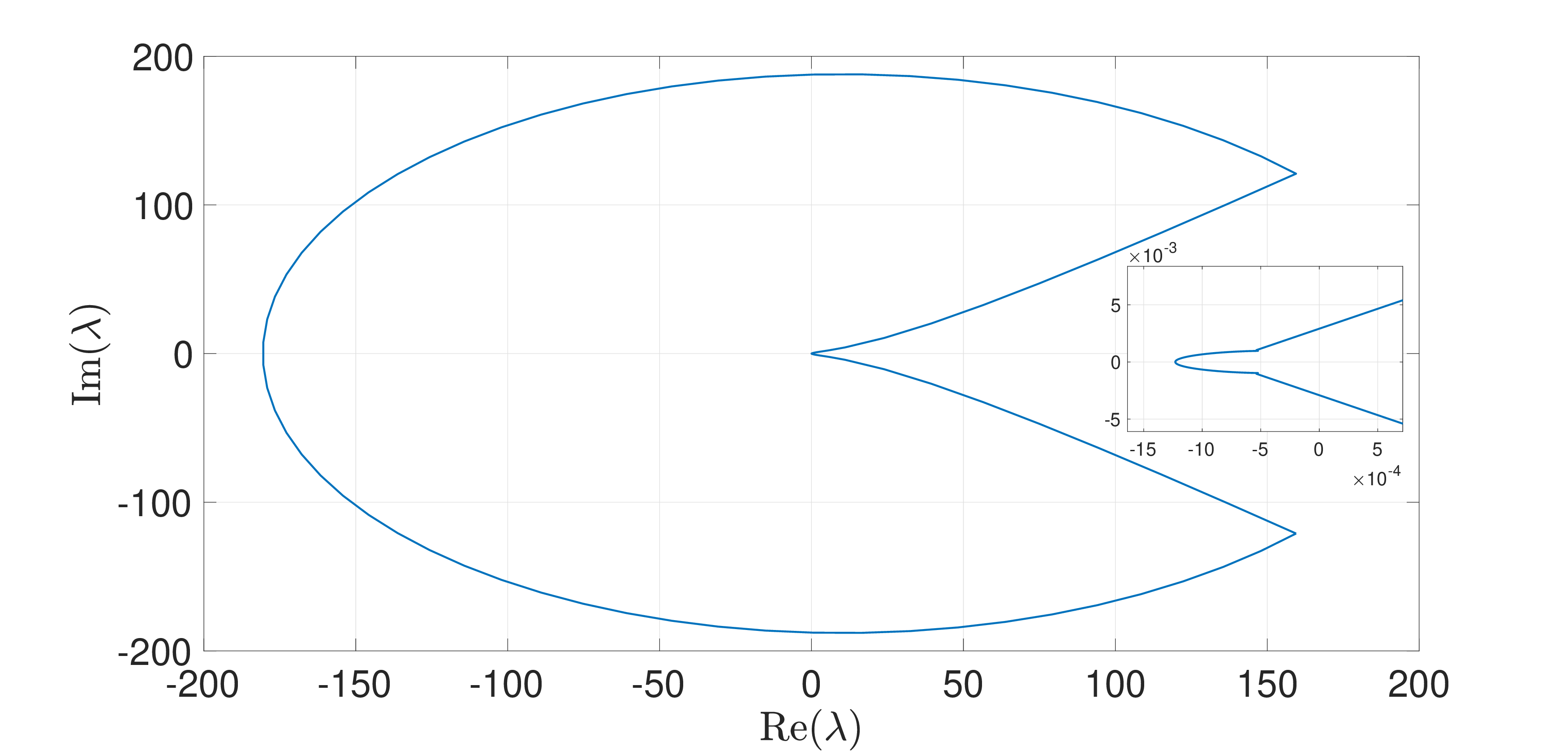}
\caption{}
\end{subfigure}

    \caption{(a) Semicircular contour $K$ of radius $10^5$ along which the Riccati-Evans function \eqref{eq:riccatievans} is evaluated. A small detour avoids the translational eigenvalue at the origin (inset).  (b) Image $E(z_0,K)$ in the complex plane. Note that the image does not wind around the origin (inset). Parameter set as in Fig. \ref{fig:fullhet}.} 
    \label{fig:riccatievans}
\end{figure}

We use a semicircular contour $K$ with increasingly large radii $R = 10^3,\,10^4,\,10^5$ opening in the right half complex plane,  with a small semicircular detour that avoids the translational eigenvalue at the origin (see Fig. \ref{fig:riccatievans}(a)). We sampled the interval $[0,a_m]$ with a grid of 100 equally spaced points, and we fix $\varepsilon = 10^{-4}$. For each $a$ in this sample, we evaluate $E(z_0,\lambda)$ around $K$ to find a winding number of 0 (see e.g. Fig. \ref{fig:riccatievans}(b)). Thus we obtain strong numerical evidence that there is no point spectrum within $K$, and hence that the corresponding family of shock-fronted travelling waves remains nonlinearly stable for each $a \in [0,a_m]$ and for each sufficiently small $\varepsilon>0$. \\

We also investigated the zeroes of $E(z_0,\lambda)$ along the real line. For each sampled parameter value $\delta$, we found evidence of only one simple translational eigenvalue $\lambda_0 = 0$ and one other simple real eigenvalue $\lambda_1 \approx -0.8$. We note that this result is entirely consistent with the calculation of the point spectrum in the `pure Cahn-Hilliard type' and `purely viscous' regularisation cases \cite{lizarraga-nonlocal,lizarraga-viscous}; indeed, these eigenvalues are accounted for by the corresponding reduced \textit{slow eigenvalue problem} defined on the critical manifold. As depicted in Fig. \ref{fig:slowbifdiag}(b), singular heteroclinic connections are formed over a small range of wavespeeds $c \in [0.1973,0.1993]$ as $a \geq 0$ is varied. This in turn slightly perturbs the dynamics of the reduced eigenvalue problem along the singular heteroclinic connection. As a consequence of these small variations, the (singular limit of the) secondary eigenvalue $\lambda_1$ moves continuously within the small interval $ [-0.81,-0.79]$ on the real line as $a$ is varied. The key point that we would like to emphasize is that a slow eigenvalue problem (defined for $\varepsilon = 0$) continues to approximate the eigenvalues of the `full' problem (defined for $0 < \varepsilon \ll 1$) when $a > 0$. \\

\begin{remark} 
It is interesting to ask whether the translational eigenvalue can bifurcate under variation of the regularisation parameter $a$ via e.g. transcritical crossings with the secondary eigenvalue $\lambda_1 = \lambda_1(a)$.  In other words, is it possible to destabilize a (regularised) shock-fronted travelling wave of \eqref{eq:vcharnad} just by re-weighting the regularisation?  \\

We argue that such a destabilization scenario is not possible for families of monotone travelling waves. We showed using comparison methods in \cite{lizarraga-viscous} that if a projectivised solution along a singular heteroclinic orbit of the reduced eigenvalue problem has no winds at $\lambda = \lambda_0 \in \mathbb{R}$, then no further winds are generated for each real $\lambda > \lambda_0$, and furthermore, there are no nontrivially complex eigenvalues---i.e. there are no more eigenvalues to the right of $\lambda_0$. Now suppose that the translational eigenvalue had a transcritical bifurcation for some critical value $a = a_c$ such that $\lambda_1(a) > 0$ for $a > a_c$. Then it must be true for nearby values of  $a> a_c$ that the variational solution winds around at least once, i.e., the singular heteroclinic of \eqref{eq:rnd-reduced-1} in $(u,v)$-space makes a full revolution. 
\end{remark}

\section{Concluding remarks}

    We have given a comprehensive description of shock-fronted travelling wave solutions arising in regularised RND PDEs of the form \eqref{eq:vcharnad}, focusing on a weighted composite of two well-known regularisations---viscous relaxation and a `Cahn-Hilliard type' fourth-order spatial regularisation. We highlight the role played by {\it symmetry}, in both the layer and reduced problems (see Sec. \ref{sec:singhetconstruction}), in enforcing the existence of a locus of monotone standing waves. This set serves as a starting point for the continuation of a codimension-one bifurcation manifold of singular heteroclinic connections in parameter space. \\
    
    We track these monotone connections until they terminate, identifying the global singular bifurcations that play a key role in their termination. This bifurcation analysis simultaneously explains their transition into new kinds of shock-fronted travelling wave solutions: nonmonotone waves and waves with canard segments. It is an interesting problem to determine parameter sets such that the (non)monotone travelling wave bifurcation branches intersect the travelling wave-with-canard branch in a singular bifurcation diagram (e.g. intersections of the red and/or blue (N)MTW curves with the solid black TW-C curve, in a diagram drawn as in Figure \ref{fig:nonmbifdiag}(a)). In such a scenario, these intersection points in parameter space would correspond to the simultaneous coexistence of two distinct types of travelling wave solutions in phase space, and could therefore be interpreted as codimension-two branch switching bifurcations. \\

    We highlight the role played by Melnikov theory in the construction of the heteroclinic bifurcation manifold---in particular, we provide a new adaptation to piecewise-smooth planar systems (see Lemma \ref{lem:pwmelnikov} and Sec. \ref{sec:pwmelnikov}). This new piecewise-smooth formula motivates the development of a more general piecewise-smooth Melnikov theory for autonomous dynamical systems in $\mathbb{R}^n$.\\

    We also investigated the spectral stability of the shock-fronted travelling wave solutions in our RND PDE, finding numerical evidence that our model admits spectrally stable monotone shockwaves. It is worthwhile to investigate the spectral stability of the nonmonotone and canard-type shockwave families we have identified. We conjecture that nonmonotonicity may be a geometric mechanism to destabilize these families. Let us highlight that while the canard-type shockwaves are apparently monotone near the tails (see Fig.  \ref{fig:interpolcanardshock}), fast connections from $S^r_s$ to $S_m$ can now introduce winding in the shock layer: since $S_m$ contains subsets where the nontrivial layer eigenvalues become complex, a shock departing from $S^r_s$ can potentially rotate infinitely often as it approaches $S_m$. A thorough investigation of this scenario, from the point-of-view of slow-fast splittings of the eigenvalue problem (as done in  \cite{lizarraga-nonlocal,lizarraga-viscous}, and this paper), is a topic of future work.\\

    Finally, we wish to point out that other, unrelated types of high-order regularisations appear in the shockwave literature. For example, numerical regularisations have been derived by using the technique of {\it modified PDE} analysis applied to finite difference schemes; see \cite{Anguige_2008}. We expect that GSPT will continue to provide a flexible and powerful approach to explain the dynamics of RND PDEs subject to a wide class of high-order regularisations.\\

\subsection*{Acknowledgement}

This work has been funded by the Australian Research Council DP200102130 grant.

\bibliographystyle{plain}
\bibliography{regbib2}

\newpage
\appendix

\section{Melnikov theory for autonomous vector fields} 

One of the main analytical tools that deals with the existence and bifurcation of heteroclinic orbits in dynamical systems is known as {\em Melnikov theory}. We follow the treatment of Vanderbauwhede \cite{Vanderbauwhede_1992} for sufficiently smooth autonomous dynamical systems in arbitrary dimensions; see also the extension to heteroclinic problems on non-compact domains in \cite{Wechselberger_2002}. Here we provide a succinct summary of this theory for autonomous problems, tailored towards the heteroclinic orbit analysis of the layer problem in section~\ref{sec:layer-het}. \\

We then adapt Melnikov theory to the case of planar piecewise-smooth dynamical systems, which is needed to show the existence of  singular heteroclinic orbits in section~\ref{sec:concat}. The Melnikov method has been adapted to the piecewise-smooth setting in different nonsmooth contexts, with an emphasis on time-periodic perturbations of planar vector fields with either Hamiltonian or trace-free structure (see e.g. \cite{granados,kukucka}); however, the following Vanderbauwhede-style presentation for piecewise smooth autonomous vector fields is new, to the best of our knowledge.

\subsection{The smooth case} \label{appendix:smooth}
\label{sec:smoothmelnikov}
We work with a system of the form
\begin{align} \label{eq:melnikoveq}
x' &= h(x,\mu)
\end{align}

with $x = x(t)\in \mathbb{R}^n$, $n\ge 2$, $h$ sufficiently smooth, and the parameters are denoted by $\mu\in \mathbb{R}^{m}$, $m\ge 1$. 
We assume the existence of a unique heteroclinic connection $\Gamma = \{\gamma(t) \in \mathbb{R}^n: t \in \mathbb{R}\}$ between saddle equilibria $p_-$ and $p_+$ for some $\mu=\mu_0$, i.e.,  $\Gamma = W^u_{\mu_0}(p_-) \cap W^s_{\mu_0}(p_+)$ is the one-dimensional intersection of the unstable manifold $W^u_{\mu_0}(p_-)$ of $p_-$ and the stable manifold $W^s_{\mu_0}(p_+)$ of $p_+$ with $\dim W^u_{\mu_0}(p_-)=l_u$, $1\le l_u<n$, and  $\dim W^s_{\mu_0}(p_+)=l_s$, $1\le l_s<n$.\footnote{For the general setup with possible higher dimensional heteroclinic connections we refer to \cite{Vanderbauwhede_1992,Wechselberger_2002}.} We further assume that $l_u+l_s-1<n$. Otherwise, the intersection $W^u_{\mu_0}(p_-) \cap W^s_{\mu_0}(p_+)$ is transverse implying the persistence of this heteroclinic orbit for nearby $\mu$-values.\footnote{Note that these stable and unstable manifolds $W^u_{\mu_0}(p_-)$ and $W^s_{\mu_0}(p_+)$ persist for nearby $\mu$-values due to the robustness of the hyperbolic saddle equilibria. With a slight abuse of notation, we will denote these perturbed saddle equilibria by $p_\pm$ independent of $\mu$.}\\

We define a suitable cross section $\Sigma$ of the unique heteroclinic orbit $\Gamma$. Without loss of generality we assume the intersection of $\Gamma$ with $\Sigma$ occurs at $t=0$.

The main task is to measure the distance  between the stable and unstable manifolds $W^u_{\mu}(p_-)$ and $ W^s_{\mu}(p_+)$ in this cross section $\Sigma$ for nearby $\mu$-values. Melnikov theory defines a corresponding distance function\footnote{In higher dimensional problems, the distance function is vector-valued.} $\Delta = \Delta(\mu)$, noting that $\Delta(\mu_0)=0$. In the following, we derive computable formulas for $\Delta = \Delta(\mu)$.\\

Let $x(t)=\gamma(t) + X(t)$, with $X\in\mathbb{R}^n$, which transforms \eqref{eq:melnikoveq} to the non-autonomous problem

$$X'=A(t)X +g(X,t,\mu)$$

with the non-autonomous matrix
$A(t):= D_x h(\gamma(t);\mu_0)$
and the nonlinear remainder
$g(X,t;\mu)= h(\gamma+X;\mu)-h(\gamma;\mu_0) - A(t)X\,.$
The linear equation
\begin{equation}\label{EQ:VAREQ}
X'=A(t)X
\end{equation}
is the {\it variational equation} along $\gamma(t)$. The corresponding {\it adjoint equation} is given by

$$\Psi' +A^\top(t)\Psi=0\,.$$

Note that solutions of the variational and adjoint equations preserve a constant angle along $\gamma$, i.e.,
$(\partial/\partial t)(\Psi^\top(t) X(t))=0, \forall t\in\mathbb{R}\,.$
We can use this fact to define a splitting of the vector space $\mathbb{R}^n$ along $\gamma(t)$.
Following Vanderbauwhede in \cite{Vanderbauwhede_1992}, we define a splitting of the vector space $\mathbb{R}^n$ at $t=0$ in the following way:

$$\mathbb{R}^n=\mbox{span}\,\{h(\gamma(0),\mu_0)\} \oplus Y$$

where $Y$ is a complementary subspace that admits a further sub-splitting into dynamically distinct subspaces. In view of our assumption on $\Gamma$, we have a splitting

$$Y = V_s \oplus V_u \oplus W,$$

where $V_{s,u}$ denote subspaces complementary to $T_{\gamma(0)}W^u_{\mu_0}(p_-) \cap T_{\gamma(0)}W^s_{\mu_0}(p_+)$ inside the respective (un)stable tangent spaces $T_{\gamma(0)}W^{s,u}$ and $W$ denotes the orthogonal complement of $T_{\gamma(0)}W^{s} + T_{\gamma(0)}W^{u}$.\footnote{When $\dim (T_{\gamma(0)}W^u_{\mu_0}(p_-) \cap T_{\gamma(0)}W^s_{\mu_0}(p_+)) > 1$, the splitting for $Y$ generalises to incorporate an additional subspace $Y =  U \oplus V_s \oplus V_u \oplus W$. In this case, $U$ can be chosen complementary to $\mbox{span}\,\{h(\gamma(0),\mu_0)\}$ inside this intersection; see \cite{Vanderbauwhede_1992} for details.}\\

Based on this setup, we now specify $\Sigma=\gamma(0) + Y$ as a suitable cross-section to perform computations. A key observation of Vanderbauwhede in \cite{Vanderbauwhede_1992} is that---by means of projection maps $P_-(t)$ (for $t\leq 0$) and $Q_+(t)$ (for $t \geq 0)$, defined using properties of exponential dichotomies, together with the variation of parameters formula---the (un)stable manifolds of the saddle points can be characterised analytically as the set of initial conditions whose (forward respectively backward) trajectories have bounded norms in appropriately chosen function spaces. Indeed, $W^s_{\mu}(p_+)$ and $W^u_{\mu}(p_-)$ are locally expressible as graphs over the images of the corresponding projection maps near the intersection point $\gamma(0) \in \Sigma$. For a suitable neighbourhood $\Omega$ containing $\gamma(0)$, we have

 \begin{align} \label{eq:stablemangraph}
W_{\mu}^s(p_+) \cap \Omega &= \{\xi +\beta_+(\xi,\mu): \xi \in \omega_+ \subset \text{Im}(P_+(0))\}
\end{align}

and 

 \begin{align} \label{eq:unstablemangraph}
W_{\mu}^u(p_-) \cap \Omega &= \{\eta +\beta_-(\eta,\mu): \eta \in \omega_- \subset \text{Im}(Q_-(0))\},
\end{align}
 
 where the graphs $h_{\pm}$ are defined using the transition matrix $\Phi(t,s)$ of \eqref{EQ:VAREQ}, the projection operators $P_-(0)$ and $Q_+(0)$, and the corresponding (un)stable manifold segments  $\gamma_\pm$ of the saddle equilibria, i.e.,
 
\begin{align}  \label{eq:stablemangraph2}
\beta_+(\xi,\mu) &:= -Q_+(0) \int_{0}^{\infty} \Phi(0,s)g(s,\gamma_+(\xi,\mu)(s),\mu)ds
\end{align}

 and 
 
 \begin{align}\label{eq:unstablemangraph2}
\beta_-(\eta,\mu) &:= P_-(0) \int_{-\infty}^{0} \Phi(0,s)g(s,\gamma_-(\eta,\mu)(s),\mu)ds.
\end{align}

Note that the splitting defines a local coordinate system in the neighbourhood $\Omega$. The cross-section is given in local coordinates by $\xi = \eta = 0$. We may therefore define a suitable distance function on $\Sigma$ by

\begin{align} \label{eq:distancecoords}
\Delta(\mu) &:= \beta_-(0,\mu)-\beta_+(0,\mu).
\end{align}

We can specify an orthonormal basis $\{\psi^0_1, \cdots, \psi^0_k\}$ of $W$, $k=n+1-(l_u+l_s)$,\footnote{Based on our assumptions, $\dim W= n+1 -(l_u+l_s)\ge 1$.} so that each solution $\psi_i(t)$ of the adjoint equation with initial conditon $\psi_i(0) = \psi^0_i$ decays exponentially for $t\to\pm \infty$.\footnote{Such solutions of the adjoint quation with exponential decay  for $t\to\pm\infty$ exist due to the exponential dichotomy properties induced by the saddle endpoints of this heteroclinic orbit $\Gamma$.}  Melnikov theory then establishes the following formula for the distance function, expressed in components
(see e.g. \cite{Vanderbauwhede_1992,Wechselberger_2002}):

\begin{equation}\label{equ:distance-fct2}
\begin{aligned}
\Delta_i(\mu) & =
\int_{-\infty}^0\psi_i(s)^\top g(\gamma_-(\mu)(s),s;\mu)\,ds +
\int_{0}^{\infty}
\psi_i(s)^\top g(\gamma_+(\mu)(s),s;\mu)\,ds\,\\
&=:\int_{-\infty}^\infty \psi_i(s)^\top g(\gamma(\mu)(s),s;\mu)\,ds,\qquad i=1,\ldots,k\,,
\end{aligned}
\end{equation}

where $\gamma$ in the final line should be interpreted as the representative of $\gamma_{\pm}$ in the relevant domain. 

\begin{remark}
This distance function (vector) $\Delta:\mathbb{R}^m\to\mathbb{R}^k$ is well-defined since the vector $\psi(s)$ decays exponentially in forward and backward time and $\gamma_\pm$ is bounded.
Solving $\Delta(\mu)=0$ is well defined for $m\ge k$. For $m=k$ and $\mbox{rk}\, (D_\mu\Delta)=k$, $\mu=\mu_0$ is an isolated zero. For $m>k$, we expect the solution set to be a submanifold of codimension $k$.
\end{remark}
\begin{remark}
The distance formula has a particularly simple representation which is independent of the projection operators, because we have some freedom in choosing $P_-(t)$ and $Q_+(t)$; indeed, we can choose them so that their kernels are orthogonal to the adjoint solution $\psi(t)$ at $t=0$. This is relevant for the derivation of the piecewise-smooth Melnikov function in the next section.
\end{remark}

In general, one cannot solve $\Delta(\mu)=0$ explicitly, but one can calculate its leading order Taylor series expansion.
Taking partial derivatives of \eqref{equ:distance-fct2} with respect to $\mu_j$ and referring to the definition of the nonlinear remainder $g(X,t;\mu)$, we obtain the following formulas, defined component-wise:

\begin{align} \label{eq:melnikovintegral2}
D_{\mu_j} \Delta_i (\mu_0) &= \int_{-\infty}^\infty \psi_i(s)^\top \frac{\partial h}{\partial \mu_j}(\gamma_0(s),s;\mu_0)\,ds,\qquad j = 1,\ldots m\,,\quad i = 1,\ldots k\,.
\end{align}

\begin{definition}
The first-order derivative terms \eqref{eq:melnikovintegral2} are known as first-order \emph{Melnikov integrals}.    
\end{definition}

\begin{remark}
In the planar case that we consider, we have $k = 1$ and $Y=W$. A suitable adjoint solution $\psi(t)$ can be chosen so that $\psi(0)$ is orthogonal to $h(\gamma(0),\mu_0)$ at $t=0$. 
\end{remark}

\subsection{The piecewise-smooth planar case}\label{sec:pwmelnikov}

We now adapt the Melnikov method developed in Appendix~\ref{sec:smoothmelnikov} to a subclass of piecewise-smooth planar vector fields, thereby proving Lemma \ref{lem:pwmelnikov}. Fix $\delta > 0$ and $x_{\Sigma} \in \mathbb{R}$, and consider a pair of planar vector fields $h_\pm(x,\mu)=(h_\pm^1(x,\mu),h_\pm^2(x,\mu))$, such that $h_-$ is smoothly defined on a compact subset $V_- \subset \{x_1 < x_{\Sigma} + \delta\}$ and $h_+$ is smoothly defined on a compact subset $V_+ \subset \{x_1 > x_{\Sigma}-\delta\}$. Both vector fields $h_\pm$ are smoothly dependent on the parameters $\mu \in \mathbb{R}^{m}$.\\

We assume that $V_- \cap V_+$ is nonempty and contains a (vertical) cross-section $\Sigma \subset \{x_1 =  x_{\Sigma}\}$ of both vector fields $h_\pm$ for each $\mu \in P\subset\mathbb{R}^{m}$ for a non-trivial subset $P$ in parameter space. In particular, both vector-fields are assumed to cross (the interior of) $\Sigma$ unidirectionally, e.g., here from left to right, for each $\mu \in P\subset\mathbb{R}^{m}$.
With this setup, we may define a piecewise smooth vector field on 
$U_- \cup U_+$, with $U_-:= V_- \cap \{x_1 < x_{\Sigma}\}$ and $U_+:=V_+ \cap \{x_1 > x_{\Sigma}\}$, as follows:

\begin{align} \label{eq:piecewiseeqns}
\dot{x} &= \begin{cases}
h_-(x,\mu)& \text{ if } x \in U_-\\
h_+(x,\mu)& \text{ if } x \in U_+.
\end{cases}
\end{align}
%

We assume the existence of a {\it piecewise-smooth heteroclinic orbit} which connects a saddle equilibrium $p_- \in U_-$ to one at $p_+ \in U_+ $ for some $\mu = \mu_0$ in (the interior of) $P$ as follows:
specify an unstable manifold segment $W^u(p_-) \subset U_- \cup \Sigma$, defined for $t \leq 0$ by a trajectory $\gamma_-(t)$ satisfying $\dot{x} = h_-(x,\mu_0)$, and a stable manifold segment $W^s(p_+) \subset U_+ \cup \Sigma$ defined for $t \geq 0 $ by a trajectory $\gamma_+(t)$ satisfying $\dot{x} = h_+(x,\mu_0)$, such that $\gamma(0) := \gamma_-(0) = \gamma_+(0) \in \Sigma$. The piecewise-smooth heteroclinic orbit $\gamma_0$ is then defined by the set

$$\gamma_0 = \left(\bigcup_{t < 0} \gamma_-(t)\right) \cup \gamma(0) \cup  \left(\bigcup_{t > 0} \gamma_+(t)\right).$$ 

See Figure~\ref{fig:pwheteroclinic} for a depiction of such a piecewise-defined $\gamma_0$. As in the smooth case, we seek to determine the persistence of $\gamma_0$ under parameter variation via a Melnikov calculation on $\Sigma$. \\

\begin{figure}
    \centering
    \includegraphics[width=10cm]{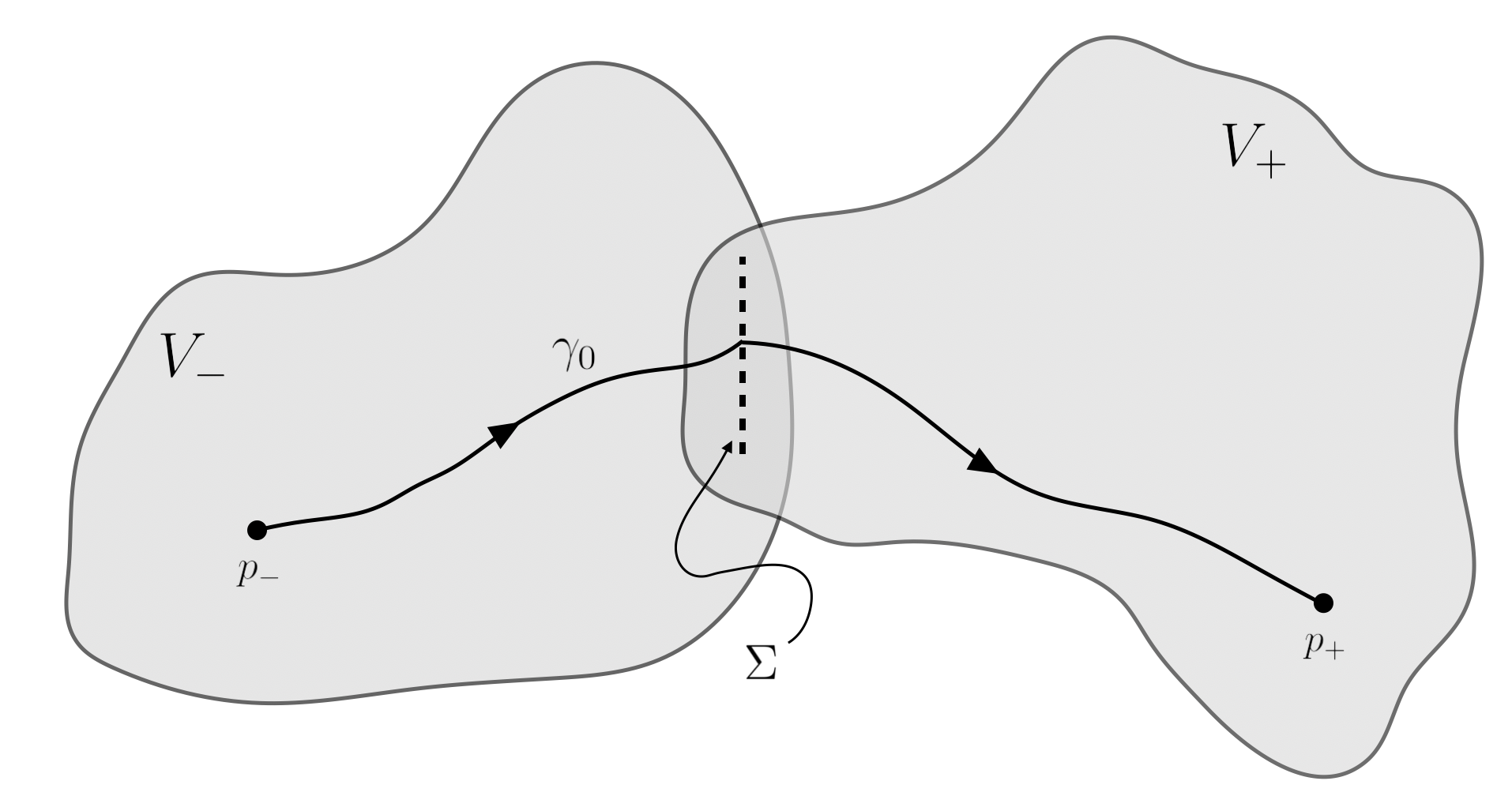}    
    \caption{A sketch of a piecewise-smooth heteroclinic connection $\gamma_0$.} 
    \label{fig:pwheteroclinic}
\end{figure}

Much of the setup in Appendix~\ref{sec:smoothmelnikov} carries over for the problems $\dot{x} = h_{\pm}(x,\mu)$ defined separately on the subsets $V_{\pm}$. On $V_-$, we can specify a section $\Sigma_-$ through $\gamma(0)$ which is aligned with a backward exponentially decaying adjoint solution $\psi_-(t)$ at $t = 0$. The associated graph representation of the unstable manifolds in a neighbourhood of $\gamma(0)$ is given by

 \begin{align}\label{eq:unstablemangraph2pw}
\beta_-(\eta,\mu) &:= P_-(0) \int_{-\infty}^{0} \Phi_-(0,s)g_-(s,\gamma_-(\eta,\mu)(s),\mu)ds,
\end{align}

\begin{figure}
    \centering
    \includegraphics[width=10cm]{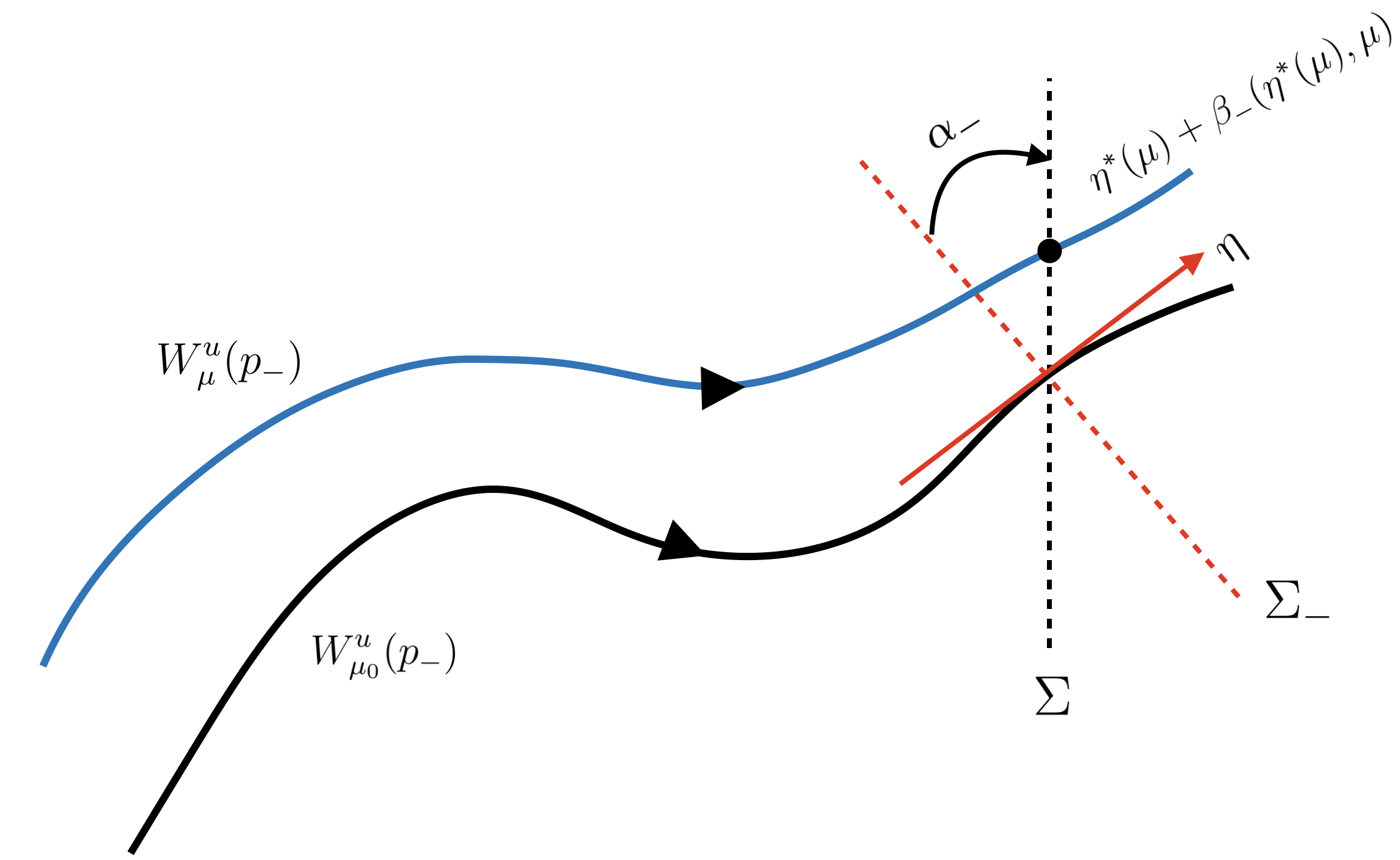}    
    \caption{Local coordinates near $\Sigma$ on $V_-$.} 
    \label{fig:slopecoords}
\end{figure}

where we denote the appropriate objects in $V_-$ by the $(-)$ subscript. Similarly, near an appropriately chosen cross-section $\Sigma_+$ which is aligned with a forward exponentially decaying adjoint solution $\psi_+(t)$ at $t = 0$ there is a graph representation of the stable manifolds on $V_+$, given by 

\begin{align}  \label{eq:stablemangraph2pw}
\beta_+(\xi,\mu) &:= -Q_+(0) \int_{0}^{\infty} \Phi_+(0,s)g_+(s,\gamma_+(\xi,\mu)(s),\mu)ds.
\end{align}

The main conceptual difference in deriving the distance function in the piecewise-smooth case  is that the local coordinates used to specify the (un)stable manifold segments near $\Sigma$ are adapted to the sections $\Sigma_-$ and $\Sigma_+$, neither of which is aligned with $\Sigma$ in general (e.g. see Fig. \ref{fig:slopecoords}). This introduces extra terms into the separation function when measuring the distance between $W^u(p_-)$ and $W^s(p_+)$ on $\Sigma$, as we now show.  \\

We denote by $\alpha_\pm\in (-\pi/2,\pi/2)$ the acute angle of $\psi_\pm(0)$ relative to the vertical cross section $\Sigma$ spanned by $e_2$, or, equivalently, the acute angle of $\dot\gamma_\pm(0)$ relative to the horizontal coordinate axis spanned by $e_1$. Since $\dot\gamma_\pm(0)$ lies in the orthogonal complement of the linear subspaces ${\cal W}^\pm$, i.e., in the span of $\psi^\perp_\pm(0)$, we define the corresponding orthonormal basis vectors in $\mathbb{R}^2|_{\gamma(0)}=
\mbox{span}\,\{\psi^\perp_\pm(0)\} \oplus \mbox{span}\,\{\psi_\pm(0)\}$ 
with respect to the angle $\alpha_\pm$:

$$
\psi_\pm^\perp(0)=
\begin{pmatrix}
\cos(\alpha_\pm)\\
\sin(\alpha_\pm)
\end{pmatrix}
\,,\qquad
\psi_\pm(0)=
\begin{pmatrix}
-\sin(\alpha_\pm)\\
\cos(\alpha_\pm)
\end{pmatrix}
\,.
$$

Hence, the local (un)stable manifolds $W^{s/u}_\pm (\mu)$ near $\gamma(0)$ described in this basis are given by

\begin{equation}
\begin{aligned}
W^{u}_-(\mu) &=\{\eta+\beta_-(\eta,\mu)=\psi_-^\perp(0)\rho_- + \psi_-(0)\sigma_- (\mu)\},\\
W^{s}_+(\mu) &=\{\xi+\beta_+(\xi,\mu)=\psi_+^\perp(0)\rho_+ + \psi_+(0)\sigma_+ (\mu)\}
\end{aligned}
\end{equation} 

with local coordinates $\rho_\pm\in\mathbb{R}$ and $\sigma_\pm (\mu)\in\mathbb{R}$,

$$
\sigma_\pm (\mu) =\psi_\pm^\top(0)\beta_\pm = \int_{\pm\infty}^0 \psi_\pm^\top(s)g_\pm(s,\gamma_\pm(s),\mu)\,ds\,.
$$

 To measure the distance of these manifolds $W^{s/u}_\pm (\mu)$ in $\Sigma$, we identify the unique intersection points $W^{s/u}_\pm (\mu)\cap \Sigma$ which are given by the coordinate relationship 
$\rho_\pm^\ast :=\tan (\alpha_\pm)\sigma_\pm (\mu)$ in the basis defined above, i.e., this coordinate relationship guarantees that the horizontal component vanishes in the local Euclidean coordinate frame. From the implicit function theorem it follows that this coordinate relationship can be indeed solved locally as a function of the parameter $\mu$ as shown.  See Figure~\ref{fig:slopecoords} for a sketch of the local coordinates on the patch $V_-$. A similar sketch can be drawn for the patch $V_+$ to measure the location of the stable manifold $W^s_{\mu}(p_+)$ for $\mu$ sufficiently close to $\mu_0$.\\

The distance function on $\Sigma$ can now be defined as
\begin{equation}
\begin{aligned}
\Delta(\mu) &= \eta^*(\mu) - \xi^*(\mu) + \beta_-(\eta^*(\mu),\mu) - \beta_+(\xi^*(\mu),\mu)\\
&= \psi_-^\perp(0) \tan (\alpha_-)\sigma_- (\mu) - \psi_+^\perp(0) \tan (\alpha_+)\sigma_+ (\mu)
+ \psi_-(0)\sigma_- (\mu) - \psi_+(0)\sigma_+ (\mu)\,.
\end{aligned}
\end{equation} 
Since $\Delta(\mu)\in\Sigma$, we are only interested in the vertical component of this vector which is given by

\begin{equation} \label{eq:distancecoords2}
\begin{aligned}
e_2^\top \Delta(\mu)&=
\sin (\alpha_-) \tan (\alpha_-)\sigma_- (\mu) - \sin (\alpha_+) \tan (\alpha_+)\sigma_+ (\mu)
+ \cos (\alpha_-)\sigma_- (\mu) - \cos (\alpha_+)\sigma_+ (\mu)
\\
&=\frac{1}{\cos (\alpha_-)}\sigma_- (\mu) - \frac{1}{\cos (\alpha_+)}\sigma_+ (\mu)\\
&= \frac{1}{\cos (\alpha_-)}\int_{-\infty}^0 \psi_-^\top(s)g_-(s,\gamma_-(s),\mu)\,ds 
+ \frac{1}{\cos (\alpha_+)}\int^{\infty}_0 \psi_+^\top(s)g_+(s,\gamma_+(s),\mu)\,ds\,.
\end{aligned}
\end{equation}

\begin{remark}
The formula \eqref{eq:distancecoords2} also generalises the classical distance formula \eqref{eq:distancecoords} for sections $\Sigma$ that are nonorthogonal to the flow in the smooth case. 
\end{remark}

Let $u_{\pm} = (v_{1,\pm},v_{2,\pm})^T$ denote unit basis vectors obtained from normalising the vector fields $h_{\pm}(x,\mu_0)$ evaluated at $\gamma(0)$. An algebra calculation shows that $\cos(\alpha_{\pm}) = v_{1,\pm}$ and we may choose $\psi_{\pm}(0) = (-v_{2,\pm},v_{1,\pm})^T$. Then  the corresponding first-order Melnikov integrals, 
obtained as in the classical case by differentiating \eqref{eq:distancecoords2} with respect to $\mu_i$ and evaluating at $\mu = \mu_0$, have the particularly convenient form
\begin{equation}
\label{eq:melnikovintegralspw2}
\begin{aligned}
D_{\mu_i}G(\mu_0) &= \frac{1}{v_1^-}\int_{-\infty}^0 \psi_-(s)^{\top}  \frac{\partial h_-}{\partial \mu_i}(\gamma_-(s),s;\mu_0)\,ds
+ \frac{1}{v_1^+}\int_{0}^{\infty} \psi_+(s)^{\top}  \frac{\partial h_+}{\partial \mu_i}(\gamma_+(s),s;\mu_0)\,ds.
\end{aligned}
\end{equation}

 The formula \eqref{eq:melnikovintegralspw2} gives the promised generalisation of \eqref{eq:melnikovintegral2} in the piecewise-smooth  planar setting. Note that if $u_{-} = u_{+}$ (so that the heteroclinic $\gamma$ connects smoothly across the discontinuity surface $\Sigma$), then \eqref{eq:melnikovintegralspw2} is identical to \eqref{eq:melnikovintegral2} up to a positive prefactor (with the improper integral appropriately interpreted as a sum of two integrals evaluated separately on either side of the discontinuity).\\
 
In fact, we also recover the classical Melnikov integral (up to a positive prefactor) if $v_1^- = v_1^+$ only. This observation is useful when the piecewise-smooth problem inherits a reflection symmetry in the vertical component from a smooth problem; see Sec. \ref{sec:melnikovreduced}.

\end{document}